\newtheorem{Lemma}{Lemma}[section]
\newtheorem{Theorem}{Theorem}[section]
\newtheorem{Proposition}{Proposition}[section]
\numberwithin{equation}{section}
\newcommand{\dis}{\displaystyle}
\newcommand{\R}{\mathbb{R}}
\newcommand{\FP}{\mathbf{P}}
\newcommand{\FI}{\mathbf{I}}
\newcommand{\CD}{\mathcal{D}}
\newcommand{\CE}{\mathcal{E}}
\newcommand{\CF}{\mathcal{F}}
\newcommand{\na}{\nabla}
\newcommand{\al}{\alpha}
\newcommand{\be}{\beta}
\newcommand{\ga}{\gamma}
\newcommand{\la}{\lambda}
\newcommand{\de}{\delta}
\newcommand{\si}{\sigma}
\newcommand{\pa}{\partial}
\newcommand{\eps}{\epsilon}
\newcommand{\De}{\Delta}
\newcommand{\Ga}{\Gamma}
\newcommand{\lag}{\langle}
\newcommand{\rag}{\rangle}
\begin{document}

\title[The Vlasov-Poisson-Boltzmann system without angular cutoff]{The Vlasov-Poisson-Boltzmann system without angular cutoff}

\author[R.-J. Duan]{Renjun Duan}
\address[RJD]{Department of Mathematics, The Chinese University of Hong Kong,
Shatin, Hong Kong}
\email{rjduan@math.cuhk.edu.hk}

\author[S.-Q. Liu]{Shuangqian Liu}
\address[SQL]{Department of Mathematics, Jinan Unviersity, Guangdong, P.R.~China}
\email{shqliusx@163.com}

\date{\today}


\begin{abstract}
This paper is concerned with the Vlasov-Poisson-Boltzmann system for plasma particles of two species in three space dimensions. The Boltzmann collision kernel is assumed to be angular non-cutoff with $-3<\ga<-2s$ and $1/2\leq s<1$, where $\ga$, $s$ are two parameters describing the kinetic and angular singularities, respectively. We establish the global existence and convergence rates of classical solutions to the Cauchy problem when initial data is near Maxwellians. This extends the results in \cite{DYZ-h, DYZ-s} for the cutoff kernel with $-2\leq \ga\leq 1$ to the case $-3<\ga<-2$ as long as the angular singularity exists instead and is strong enough, i.e., $s$ is close to $1$. The proof is based on the time-weighted energy method building also upon the recent studies of the non cutoff Boltzmann equation in \cite{GR} and the Vlasov-Poisson-Landau system in \cite{Guo5}.\\
\begin{center}
\it Dedicated to Professor Seiji Ukai (1940-2012)
\end{center}
\end{abstract}

\maketitle

\thispagestyle{empty}
\setcounter{tocdepth}{1}

\tableofcontents

\section{Introduction}

\subsection{Problem}
We consider the following
Vlasov-Poisson-Boltzmann system describing the motion of plasma
particles of two species (e.g.~ions and electrons) in the whole space ${\bf R}^3$, cf.~\cite{TrKr}:
\begin{equation}\label{VPB eqn}
\begin{split}
\partial_{t}F_++v\cdot\nabla_{x}F_++E\cdot\nabla_vF_+&=Q(F_+,F_+)+Q(F_-,F_+),\\
\partial_{t}F_-+v\cdot\nabla_{x}F_--E\cdot\nabla_vF_-&=Q(F_-,F_-)+Q(F_+,F_-).
\end{split}
\end{equation}
The self-consistent electrostatic field takes the form of $E(t,x)=-\nabla_x\phi$, with the electric potential $\phi$ satisfying
\begin{equation}\label{poisson eqn}
-\Delta\phi=\int_{{\bf R}^3}(F_+-F_-)\,dv, \ \ \phi\rightarrow 0\ \ \text{as}\ \ |x| \rightarrow\infty.
\end{equation}
The initial data of the  system is given as
\begin{equation}\label{initial data}
F_\pm(0,x,v)=F_{\pm,0}(x,v).
\end{equation}
Here, the unknown $F_{\pm}(t,x,v)\geq0$ stand for the velocity distribution functions for the
particles with position
$x=(x_1,x_2,x_3)\in{\bf R}^3$ and velocity $v=(v_1,v_2,v_3)\in{\bf
R}^{3}$  at time $t\geq0$. The bilinear  collision operator $Q(F,G)$  {on the right-hand side of \eqref{VPB eqn}} is defined by
\begin{equation*}
\begin{split}
Q(F,G)(v)=&\int_{{\bf R}^{3}}\int_{{\bf S}^{2}}B(v-u,\si) {\left[
F(u')G(v')-F(u)G(v)\right]\,dud\si},
\end{split}
\end{equation*}
where in terms of velocities $v$ and $u$ before the collision, velocities $v'$ and $u'$ after the collision are defined by
\begin{equation*}
v'=\frac{v+u}{2}+\frac{|v-u|}{2}\si,\quad
u'=\frac{v+u}{2}-\frac{|v-u|}{2}\si.
\end{equation*}
The Boltzmann collision kernel $B(v-u,\si)\geq 0$ depends only on the relative velocity
$|v-u|$ and on the deviation angle $\theta$ given by $\cos \theta=\langle\si,(v-u)/|v-u|\rangle$, where $\langle\,,\,\rangle$ is the usual dot product in ${\bf R}^3$. As in \cite{GR}, without loss of generality, we suppose that
$B(v-u,\si)$ is supported on $\cos \theta\geq 0$. Notice also that all the physical parameters, such as the particle masses and the light speed, and all other involving constants, have been chosen to be unit  for simplicity of presentation.
Throughout the paper, the collision kernel is further supposed to satisfy the following assumptions:

\begin{itemize}
  \item[$\bullet$] $B(v-u,\si)$ takes the product form in its argument as
$$
B(v-u,\si)=\Phi(|v-u|)\mathbbm{b}(\cos \theta)
$$
with $\Phi$ and  $\mathbbm{b}$ being nonnegative functions.

  \item[$\bullet$]  The angular function $\si \rightarrow \mathbbm{b}(\lag \si, (v-u)/|v-u|\rag )$ is not integrable on ${\bf S}^2$, i.e.
$$
\int_{{\bf S}^{2}}\mathbbm{b}(\cos \theta)\,d\si=2\pi\int_{0}^{\frac{\pi}{2}}\sin \theta \mathbbm{b}(\cos\theta)\,d\theta=\infty.
$$
Moreover, there are $c_\mathbbm{b}>0$, $0<s<1$ such that
\begin{equation*}
\frac{c_\mathbbm{b}}{\theta^{1+2s}}\leq \sin\theta\mathbbm{b}(\cos\theta)\leq \frac{1}{c_\mathbbm{b}\theta^{1
+2s}},\ \forall\,0<\theta\leq \frac{\pi}{2}.
\end{equation*}

  \item[$\bullet$] The kinetic function $z\rightarrow\Phi(|z|)$ satisfies
$$
\Phi(|z|)=C_{\Phi}|z|^\gamma
$$
for a contant  $C_\Phi>0$,
where the exponent $\gamma>-3$ is determined by the intermolecular interactive mechanism.
\end{itemize}

It is convenient to call soft potentials
when $-3<\gamma<-2s$, and hard potentials when $\gamma+2s\geq 0$. The current work will be restricted to the case of $-3<\gamma<-2s$ and $1/2\leq s<1$. Recall that when the  intermolecular interactive potential takes the inverse power law in the form of $U(|x|)=|x|^{-(\ell-1)}$ with $2<\ell<\infty$, the collision kernel $B(v-u,\si)$ in three space dimensions satisfies the above assumptions with $\gamma=\frac{\ell-5}{\ell-1}$ and $s=\frac{1}{\ell-1}$, and our restriction corresponds to the condition $2<\ell<3$ in terms of $\ell$. Note  $\gamma\rightarrow-3$ and $s\rightarrow1$ as $\ell\rightarrow2$ in the limiting case, for which the grazing collisions between particles are dominated and the Boltzmann collision term has to be replaced by the classical Landau collision term for the Coulomb potential, cf.~{\cite{Villani1}}.  {As far as the global classical solutions near Maxwellians to the pure Boltzmann equation with angular cutoff in the absence of any force are concerned, we only mention Ukai \cite{U74}, Ukai-Asano \cite{UA}, Caflisch \cite{Ca}, and Guo \cite{Guo-s}. }  

In the paper our goal is to establish the global existence of solutions to the
Cauchy problem (\ref{VPB eqn})-(\ref{initial data}) of the  Vlasov-Poisson-Boltzmann system near the global Maxwellian equilibrium states. This issue was firstly investigated by Guo \cite{Guo2} for the hard-sphere model of the Vlasov-Poisson-Boltzmann system in a periodic box. Since then, the robust energy method was also developed in \cite{Guo3} to deal with the hard-sphere Boltzmann equation even with the self-consistent electric and magnetic fields; see also  {\cite{Guo4,Guo-IUMJ,HY,LZ,YYZ}.}  However, the non hard-sphere case has remained open for general collision potentials either with the Grad's angular cutoff assumption or not. Until recently, Guo \cite{Guo5} made  further progress in proving
the global existence of classical solutions
to  the Vlasov-Poisson-Landau system in a periodic box for the most important Coulomb potential. One of the key points in the proof  there is to design a new velocity weight depending on the order of space and velocity derivatives so as to capture the anisotropic dissipation property of the linearized Landau operator. Due to the recent study of the non cutoff Boltzmann equation independently by Gressman-Strain \cite{GR, GR-am} and AMUXY \cite{AMUXY-1, AMUXY-2, AMUXY-3}, it is now well known that the linearized Boltzmann operator without angular cutoff has the similar anisotropic dissipation phenomenon with the Landau, cf.~\cite{DL,Guo-L}. Therefore, as mentioned in \cite{Guo5}, it is also interesting to see whether or not the approach in \cite{Guo5} can be applied to the non cutoff Vlasov-Poisson-Boltzmann system for the non hard-sphere model; see also \cite{Strain-Zhu} and \cite{LY} for two recent applications.

On the other hand, basing on the time weighted energy method, \cite{DYZ-h,DYZ-s, DYZ-L} recently developed another approach for the study of the Boltzmann or Landau equation with external forces for general collision potentials. The main difference with \cite{Guo5} is to introduce another kind of time-velocity dependent weight function which can induce the extra dissipation mechanism to compensate the weaker dissipation of the linearized collision operator in the  case of non hard-sphere models, particularly physically interesting soft potentials. Unfortunately, for the Vlasov-Poisson-Boltzmann system with angular cutoff, the problem was solved only in the case of $-2\leq \ga\leq 1$ and is still left open for the very soft potential case $-3<\ga<-2$.
In this paper, building on \cite{GR} and \cite{Strain11}, we will extend the results in \cite{DYZ-h, DYZ-s} for the cutoff kernel with $-2\leq \ga\leq 1$ to the non cutoff case $-3<\ga<-2$ as long as the angular singularity exists instead and it is strong enough, i.e., $s$ is close to $1$.

\subsection{Reformulation}
In what follows we will reformulate the problem as in {\cite{Guo2,Guo3}}. Denote a normalized global Maxwellian $\mu$ by
$$\mu(v)=\frac{1}{(2\pi)^{\frac{3}{2}}}\exp\left(-|v|^{2}/2\right).$$
Set $F_{\pm}(t,x,v)=\mu(v)+\sqrt{\mu(v)}f_{\pm}(t,x,v)$. Denote by $[\cdot,\cdot]$ the column vectors $F=[F_+,F_-]$, $f=[f_+,f_-]$ and $f_0=[f_{0,+},f_{0,-}]$. Then the
Cauchy problem (\ref{VPB eqn})-(\ref{initial data}) can be
reformulated as
\begin{eqnarray}
&\dis \partial_{t}f+v\cdot\nabla_{x}f-q\nabla_x\phi\cdot\nabla_v f+\nabla_x\phi\cdot v\sqrt{\mu }q_1+Lf=\Gamma(f,f)-\frac{q}{2}v\cdot \nabla_x\phi f,\label{perturbed eqn}\\
&\dis
-\Delta\phi=\int_{{\bf
R}^3}(f_+-f_-)\sqrt{\mu(v)}\,dv, \ \ \phi\rightarrow0\ \ \text{as}\ \ |x| \rightarrow\infty,\label{perturbed E}
\end{eqnarray}
with given initial data
\begin{equation}\label{perturbed data}
f(0,x,v)=f_{0}(x,v).
\end{equation}
Here, $q=diag(1,-1)$, $q_1=[1,-1]$. $L$ are $\Ga$ are the linearized and nonlinear collision operators, respectively.
For $f=[f_+,f_-]$ and  $g=[g_+,g_-]$,
\begin{eqnarray*}
&\dis Lf=\left[L_+f,L_-f\right],\\
&\dis L_{\pm}f=-\mu^{-1/2}\left\{2Q\left(\mu,\mu^{1/2}f_{\pm}\right)
+Q\left(\mu^{1/2}\{f_{\pm}+f_{\mp}\},\mu\right)\right\},
\end{eqnarray*}
and
\begin{eqnarray*}
&\dis \Gamma(f,g)=\left[\Gamma_+(f,g),\Gamma_-(f,g)\right],\\
&\dis \Gamma_{\pm}(f,g)=\mu^{-1/2}\left\{Q\left(\mu^{1/2}f_{\pm},\mu^{1/2}g_{\pm}\right)
+Q\left(\mu^{1/2}f_{\mp},\mu^{1/2}g_{\pm}\right)\right\}.
\end{eqnarray*}
For later use, it is convenient to introduce the bilinear operator $\mathscr{T}$ by
\begin{equation}\label{simple nonop def}
\begin{split}
\mathscr{T}(g_1,g_2)=&\mu^{-1/2}Q\left(\mu^{1/2}g_1,\mu^{1/2}g_2\right)\\
=&\int_{{\bf R}^3}du\int_{{\bf S}^2}d\si\, B(v-u,\si)\mu^{1/2}(u)\left[g_1(u')g_2(v')-g_1(u)g_2(v)\right]
\end{split}
\end{equation}
for two  scalar functions $g_1$, $g_2$, and thus $L=[L_+,L_-]$ and $\Ga=[\Ga_+,\Ga_-]$
can be rewritten as
\begin{equation}\label{Lop expr2}
L_{\pm}f=-\left\{2\mathscr{T}\left(\mu^{1/2},f_{\pm}\right)
+\mathscr{T}\left(f_{\pm}+f_{\mp},\mu^{1/2}\right)\right\},
\end{equation}
\begin{equation*}
\Gamma_{\pm}(f,g)=\mathscr{T}(f_{\pm},g_{\pm})+\mathscr{T}(f_{\mp},g_{\pm}).
\end{equation*}

\subsection{Basic properties of $L$}
For scalar functions $f_\pm$,
the first part of the linearized Boltzmann collision term $L_\pm f$ in (\ref{Lop expr2}) can be splitted as
\begin{equation}\label{main Lop}
-2\mathscr{T}\left(\mu^{1/2}, f_\pm\right)=-2\int_{{\bf R}^3}du\int_{{\bf S}^2}d\si\, B(v-u,\si)(f_\pm(v')-f_\pm(v))\mu^{1/2}(u)\mu^{1/2}(u')+2\tilde{\nu}(v)f_\pm(v),
\end{equation}
where
$$
\tilde{\nu}(v)=\int_{{\bf R}^3}du\int_{{\bf S}^2}d\si\, B(v-u,\si)\left(\mu^{1/2}(u)-\mu^{1/2}(u')\right)\mu^{1/2}(u).
$$
The first term on the right-hand side of \eqref{main Lop} contains a crucial Hilbert space structure, while for the second term, Pao's splitting
$$
\tilde{\nu}(v)=\nu_1(v)+\nu_{2}(v)
$$
holds true, cf.~\cite{Pao}, with the following known asymptotics
$$
\nu_1(v)\sim(1+|v|^2)^{\frac{\gamma+2s}{2}},\ \ \nu_2(v)\lesssim (1+|v|^2)^{\frac{\gamma}{2}}.
$$
We now collect some basic properties of the linearized collision operator $L$ as follows:
\begin{enumerate}
\item[(i)]
As in \cite{GR}, $L$ can be decomposed as $L=\mathcal {N}+\mathcal {K}$. Here for $f=[f_+,f_-]$,
$\mathcal {N}f=[\mathcal {N}_+f,\mathcal {N}_-f]$ is the
``norm part", given by
\begin{equation*}
\begin{split}
\mathcal {N}_\pm f=&-2\mathscr{T}\left(\mu^{1/2},f_{\pm}\right)-2\nu_{2}(v)f_{\pm}\\
=&-2\int_{{\bf R}^3}du\int_{{\bf S}^2}d\si\, B(v-u,\si)(f_{\pm}(v')-f_{\pm}(v))\mu^{1/2}(u)\mu^{1/2}(u')+2\nu_1(v)f_{\pm}(v).
\end{split}
\end{equation*}
Thus by using the pre-post collisional change of variables, $\mathcal {N}_\pm g$ satisfies the identity
\begin{equation*}
\begin{split}
\langle \mathcal {N}_\pm f,f_{\pm}\rangle=&\int_{{\bf R}^3}du\int_{{\bf R}^3}dv\int_{{\bf S}^2}d\si\, B(v-u,\si)(f_{\pm}(v')-f_{\pm}(v))^2\mu^{1/2}(u)\mu^{1/2}(u')\\ &+2\int_{{\bf R}^3}dv\,\nu_1(v)|f_{\pm}(v)|^2.
\end{split}
\end{equation*}
Moreover, $\mathcal {K}f=[\mathcal {K}_+f,\mathcal {K}_-f]$ is the ``compact part", given by
\begin{equation*}
\begin{split}
&\mathcal {K}_{\pm}f=-\mathscr{T}\left(f_{+}+f_{-},\mu^{1/2}\right)+2\nu_{2}(v)f_{\pm}\\
&=-\int_{{\bf R}^3}du\int_{{\bf S}^2}d\si\, B(v-u,\si)\mu^{1/2}(u)\left[(f_{+}+f_{-})(u')\mu^{1/2}(v')-(f_{+}+f_{-})(u)\mu^{1/2}(v)\right]
\\
&\quad +2\nu_2(v)f_{\pm}(v).
\end{split}
\end{equation*}

\item[(ii)] As in \cite{Guo3}, the null space of $L$ is given by
$$
\mathscr{N}=\ker L={\rm span}\left\{[1,0]\mu^{1/2}, [0,1]\mu^{1/2},
[v_i,v_i]\mu^{1/2} (1\leq i\leq3),
\left[|v|^2,|v|^2\right]\mu^{1/2}\right\}.
$$
For given $f(t,x,v)$, one can decompose $f(t,x,v)$ uniquely as
\begin{equation*}
f={\bf P}f+{\bf(I-P)}f.
\end{equation*}
Here, ${\bf P}$ denotes the orthogonal projection from
$L^{2}_{v}\times L^{2}_{v}$ to $\mathscr{N}$, defined by
\begin{equation}\label{macro def}
{\bf P}f=\left\{a_{+}(t,x)[1,0]+a_{-}(t,x)[0,1]+v\cdot
b(t,x)[1,1]+\left(|v|^{2}-3\right)c(t,x)[1,1]\right\}\sqrt{\mu},
\end{equation}
or equivalently ${\bf P}=[{\bf P}_+,{\bf P}_-]$ with
$$
{\bf P}_{\pm}f=\left\{a_{\pm}(t,x)+v\cdot
b(t,x)+\left(|v|^{2}-3\right)c(t,x)\right\}\sqrt{\mu}.
$$
Notice that
$$
\int_{{\bf R}^{3}}\psi(v)\cdot({\bf I-P})f\,dv=0, \quad \forall\,
\psi=[\psi_+,\psi_-]\in\mathscr{N}.
$$

\item[(iii)] For any fixed $(t,x)$, $L$ is nonnegative and further $L$ is known to be
locally coercive in the sense that there is a constant $\la>0$
such that, cf.~\cite{Mouhot, Mouhot-Strain}
\begin{equation*}
\langle f,Lf\rangle=\langle({\bf I-P})f,L({\bf
I-P})f\rangle\geq\la {\left\|(1+|v|^2)^{\frac{\gamma}{4}}({\bf
I-P})f\right\|_{L^2_v}^2}.
\end{equation*}
\end{enumerate}

\subsection{Notations}

Through the paper, $C$ denotes some positive constant (generally
large) and $\lambda$ denotes some positive constant (generally
small), where both $C$ and $\lambda$ may take different values in
different places. $A\lesssim B$ means that  there is a generic constant $C>0$ such that $A\leqslant CB$. $A\sim B$ means $A\lesssim B$ and $B\lesssim A$. For multi-indices
$\alpha=[\alpha_1, \alpha_2, \alpha_3]$ and $\beta=[\beta_{1},
\beta_{2}, \beta_{3}]$,
$
\partial^{\alpha}_{\beta}=\partial_{x_{1}}^{\alpha_{1}}
\partial_{x_{2}}^{\alpha_{2}}\partial_{x_{3}}^{\alpha_{3}}
\partial_{v_{1}}^{\beta_{1}}\partial_{v_{2}}^{\beta_{2}}\partial_{v_{3}}^{\beta_{3}},
$
and the length of $\al$ is denoted by $|\al|=\al_1+\al_2+\al_3$.
$\al'\leq\al$ means that no component of $\al'$
is greater than the component of $\al$, and $\al'<\al$ means that
$\al'\leq\al$ and $|\al'|<|\al|$. $\langle\cdot,\cdot\rangle$
denotes the $L^{2}$ inner product in ${\bf R}^{3}_{v}$, with the
$L^{2}$ norm $|\cdot|_2$ or $|\cdot|$. For notational simplicity,
$(\cdot,\cdot)$ denotes the $L^{2}$ inner product either in ${\bf
R}^{3}_{x}\times {\bf R}^{3}_{v}$ or in ${\bf R}^{3}_{x}$ with
the $L^{2}$ norm $||\cdot||$. For $\ell \in{\bf R}$,
$H^{\ell}$ denotes the usual Sobolev space. For $q\geq 1$, $Z_q$ denotes the space
$Z_{q}=L^{2}({\bf
R}_v^3; L^{q}({\bf R}_x^3))$ with the norm $\|f\|_{Z_q}=\left\|\|f\|_{L^q_x}\right\|_{L^2_v}$.

We now list series of notations introduced in \cite{GR}. Let $\mathcal {S}'({\bf
R}^3)$ be the space of the tempered distribution functions. $N^{s}_{\gamma}$ denotes the weighted geometric
fractional Sobolev space
$$
N^{s}_{\gamma}=\{f\in \mathcal {S}'({\bf R}^3):
|f|_{N^{s}_{\gamma}}<\infty\},
$$
with the anisotropic norm
\begin{equation*}
|f|^2_{N^{s}_{\gamma}}= {\left|f\right|_{L^2_{\gamma+2s}}^2}+\int_{{\bf
R}^3}dv\int_{{\bf R}^3}dv'\left(\langle v\rangle\langle
v'\rangle\right)^{\frac{\gamma+2s+1}{2}}
\frac{(f(v)-f(v'))^2}{d(v,v')^{3+2s}}{\chi}_{d(v,v')\leq1},
\end{equation*}
where $\langle v\rangle=\sqrt{1+|v|^2}$, $L^2_{\ell}$  for $\ell\in{\bf R}$ denotes
 the weighted  space with the norm
$$
|f|^2_{L^2_\ell}=\int_{{\bf R}^3}dv\,\langle v\rangle^{\ell}|f(v)|^2,
$$
the anisotropic metric $d(v,v')$ measuring the fractional differentiation effects is given by
$$
d(v,v')=\sqrt{|v-v'|^2+\frac{1}{4}(|v|^2-|v'|^2)^2},
$$
and $\chi_A$ is the indicator function of a set $A$.
In ${\bf
R}^3\times{\bf
R}^3$, we use $\|\cdot\|_{N^{s}_{\gamma}}=\left\||\cdot|_{N^{s}_{\gamma}}\right\|_{L^2_x}$.

To the end, the velocity weight function $w=w(v)$ always denotes
\begin{equation}
\label{def.w}
w(v)=\lag v\rag^{-\ga}.
\end{equation}
Let $\ell\in \R$. The weighted fractional Sobolev norm is given by
\begin{equation*}
\left|w^\ell f\right|^2_{H^{s}_{\gamma}}=\left|w^\ell
h\right|_{L^2_\gamma}^2+\int_{{\bf R}^3}dv\int_{{\bf R}^3}dv'\,
\frac{\left[\langle v\rangle^{\frac{\gamma}{2}}w^\ell(v)f(v)-\langle
v'\rangle^{\frac{\gamma}{2}}w^\ell(v')f(v')\right]^2}{|v-v'|^{3+2s}}{\chi}_{|v-v'|\leq1},
\end{equation*}
which turns out to be equivalent with
\begin{equation*}
\left|w^\ell f\right|^2_{H^{s}_{\gamma}}=\int_{{\bf R}^3}dv\, \langle
v\rangle^{\gamma}\left|\left(1-\Delta_v\right)^{\frac{s}{2}}\left(w^\ell(v)
f(v)\right)\right|^2.
\end{equation*}
The velocity-weighted $|\cdot|_{N^{s}_{\ga}}$-norm is given  by
\begin{equation*}
\left|w^\ell f\right|^2_{N^{s}_{\gamma}}=\left|w^{\ell}
f\right|_{L^2_{\gamma+2s}}^2+\int_{{\bf R}^3}dv\int_{{\bf
R}^3}dv'\left(\langle v\rangle\langle
v'\rangle\right)^{\frac{\gamma+2s+1}{2}}w^{2\ell}(v)
\frac{(f(v)-f(v'))^2}{d(v,v')^{3+2s}}{\chi}_{d(v,v')\leq1}.
\end{equation*}
Notice that, cf.~\cite{GR,GR-am},
\begin{equation*}
\left|w^\ell f\right|_{L^2_{\ga+2s}}^2+\left|w^\ell f\right|^2_{H^{s}_{\gamma}}\lesssim \left|w^\ell
f\right|^2_{N^{s}_{\gamma}}\lesssim \left|w^\ell
f\right|^2_{H^{s}_{\gamma+2s}}.
\end{equation*}
In ${\bf
R}^3\times{\bf
R}^3$, $\left\|w^\ell
f\right\|_{H^{s}_\gamma}=\left\|\left|w^\ell
f\right|_{H^{s}_{\gamma}}\right\|_{L^2_x}$ and $\left\|w^\ell
f\right\|_{N^{s}_\gamma}=\left\|\left|w^\ell
f\right|_{N^{s}_{\gamma}}\right\|_{L^2_v}$ are used. For the integer $K\geq 0$,  we also use the
anisotropic space $N_{\gamma,K}^{s}({\bf R}^3\times{\bf R}^3)$ containing the space-velocity derivatives, given
by
$$
\left\|w^\ell f\right\|^2_{N_{\gamma,K}^{s}}=\left\|w^\ell
f\right\|^2_{N_{\gamma,K}^{s}({\bf R}^3\times{\bf R}^3)}
=\sum\limits_{|\alpha|+|\beta|\leq
K}\left\|w^\ell\partial_\beta^\alpha
f\right\|^{2}_{N^{s}_{\gamma}({\bf R}^3\times{\bf R}^3)},
$$
where we write
$$
\left|w^\ell f\right|^2_{N_{\gamma,K}^{s}}=\left|w^\ell
f\right|^2_{N_{\gamma,K}^{s}({\bf R}^3)} =\sum\limits_{|\beta|\leq
K}\left|w^\ell\partial_\beta f\right|^{2}_{N^{s}_{\gamma}({\bf
R}^3)}
$$
whenever only the velocity derivatives are involved. For integer $K\geq0$, we define the Sobolev space
$$
\left|f\right|_{H^K}=\sum\limits_{|\beta|\leq K}\left|\partial_\beta
f\right|_{L^2({\bf R}^3)},\ \
\left\|f\right\|_{H^K}=\sum\limits_{|\alpha|+|\beta|\leq
K}\left\|\partial^\alpha_\beta f\right\|_{L^2({\bf R}^3\times{\bf
R}^3)}.
$$
For integer $K\geq0$ and $\ell\in{\bf R}$, we define the weighted
Sobolev space
$$
\left|w^\ell f\right|_{H^K}=\sum\limits_{|\beta|\leq
K}\left|w^\ell\partial_\beta f\right|_{L^2({\bf R}^3)},\ \
\left\|w^\ell f\right\|_{H^K}=\sum\limits_{|\alpha|+|\beta|\leq
K}\left\|w^\ell\partial^\alpha_\beta f\right\|_{L^2({\bf
R}^3\times{\bf R}^3)},
$$
and
$$
\left|w^\ell f\right|_{H^{K}_{\gamma}}=\sum\limits_{|\beta|\leq
K}\left|w^\ell\langle v\rangle^{\frac{\gamma}{2}}\partial_\beta
f\right|_{L^2({\bf R}^3)},\ \ \left\|w^\ell
f\right\|_{H^{K}_{\gamma}}=\sum\limits_{|\alpha|+|\beta|\leq
K}\left\|w^\ell\langle
v\rangle^{\frac{\gamma}{2}}\partial^\alpha_\beta f\right\|_{L^2({\bf
R}^3\times{\bf R}^3)}.
$$
Finally, we define $B_C\subset{\bf R}^3$ to be the ball with center origin and
radius $C$, and use $L^2(B_C)$ to denote the space
$L^2$ over  $B_C$ and likewise for other spaces.

\subsection{Main results}

To state the result of the paper, we introduce more notations. As in
{\cite{Guo5}}, for  $l\geq0$, $\al$ and $\be$, we define the velocity
weight function $w_l(\alpha,\beta)$ by
\begin{equation}
\label{def.wab}
w_l(\alpha,\beta)(v)={w}^{l+K-|\alpha|-|\beta|}(v)=\lag v\rag^{(-\ga)(l+K-|\al|-|\be|)},
\end{equation}
where $K$ is an integer. Corresponding to given $f=f(t,x,v)$, we define the instant energy
functional $\mathcal {E}_{l}(t)$ and the instant high-order energy
functional $\mathcal {E}^h_{l}(t)$, for $l\geq 0$, to be functionals satisfying the equivalent relations
\begin{eqnarray}
\mathcal {E}_{l}(t)&\sim& \sum\limits_{|\alpha|\leq
K}\|\partial^{\alpha}E(t)\|^2+\sum\limits_{|\alpha|+|\beta|\leq
K}\left\|w_l(\alpha,\beta)\partial_{\beta}^{\alpha}f(t)\right\|^{2},\label{def e}\\
\mathcal {E}^{h}_{l}(t)&\sim& \sum\limits_{|\alpha|\leq
K}\|\partial^{\alpha}E(t)\|^2+\sum\limits_{1\leq|\alpha|\leq
K}\|\partial^{\alpha}{\bf P}f\|^2+\sum\limits_{|\alpha|+|\beta|\leq
K}\left\|w_l(\alpha,\beta)\partial_{\beta}^{\alpha}({\bf I-P})f\right\|^{2},\label{def he}
\end{eqnarray}
and define the dissipation rate functional $\mathcal {D}_{l}(t)$ by
\begin{equation}\label{def d}
\mathcal {D}_{l}(t)=\sum\limits_{|\alpha|\leq
K-1}\|\partial^{\alpha}E(t)\|^2+\sum\limits_{1\leq|\alpha|\leq
K}\|\partial^{\alpha}{\bf P}f(t)\|^2+\sum\limits_{|\alpha|+|\beta|\leq
K}\left\|w_l(\alpha,\beta)\partial_{\beta}^{\alpha}({\bf
I-P})f(t)\right\|^{2}_{N^{s}_{\gamma}}.
\end{equation}
Here, $E=E(t,x)$ is understood to be determined by $f(t,x,v)$ in terms of
\begin{equation*}
E=-\na_x\phi, \quad \phi=\frac{1}{4\pi |x|} \ast_x\rho_f, \quad \rho_f=\int_{{\bf
R}^3}(f_+-f_-)\sqrt{\mu(v)}\,dv.
\end{equation*}
The main result of the paper is stated as follows.

\begin{Theorem}\label{thm.gl}
Let $-3<\ga<-2s$, $1/2\leq s<1$. Fix $l_0\geq 0$, $1/2<p<1$, the integer $K\geq 8$, and define
\begin{equation*}
l_1=\frac{5}{4(1-p)}\frac{\gamma+2s}{\gamma}.
\end{equation*}
Let $f_0(x,v)=[f_{0,+}(x,v),f_{0,-}(x,v)]$ satisfy
$F_{\pm}(0,x,v)=\mu(v)+\sqrt{\mu(v)}f_{0,{\pm}}(x,v)\geq 0$, and
\begin{equation}\label{thm.gl.a1}
\int_{{\bf R}^3}\rho_{f_0}\,dx=0,\ \ \int_{{\bf R}^3}(1+|x|)|\rho_{f_0}|\,dx<\infty.
\end{equation}
Then, there are functionals $\CE_{l}(t)$ and $\CE_l^{h}(t)$ in the sense of \eqref{def e} and \eqref{def he} such that the following thing holds true.  If
\begin{equation}\label{thm.gl.a2}
\epsilon_{0}=\sqrt{\mathcal{E}_{l_0+l_1}(0)}+\left\|w^{l_2}f_0\right\|_{Z_1}+\|(1+|x|)\rho_0\|_{L^1},
\end{equation}
is sufficiently small, where $l_2>\frac{5(\ga+2s)}{4\ga}$ is a constant, then there exists a unique global solution $f(t,x,v)$ to the Cauchy
problem (\ref{perturbed eqn})-(\ref{perturbed data}) of the Vlasov-Poisson-Boltzmann system such that
$F_{\pm}(t,x,v)=\mu(v)+\sqrt{\mu(v)}f_{\pm}(t,x,v)\geq 0$ and
\begin{eqnarray}
&&\CE_{l_0+l_1}(t)  \lesssim \eps_0^2,\label{thm.gl.c1}\\
&&\CE_{l_0}(t) \lesssim\eps_0^2(1+t)^{-\frac{3}{2}},\label{thm.gl.c2}\\
&&\CE_{l_0}^{h}(t) \lesssim \eps_0^2(1+t)^{-\frac{3}{2}-p},\label{thm.gl.c3}
\end{eqnarray}
for any $t\geq 0$.
\end{Theorem}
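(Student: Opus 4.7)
The plan is to combine a standard local existence result with a closed family of a priori estimates via a continuity argument, and then read off the decay rates from a time-weighted bootstrap in the style of \cite{DYZ-h, DYZ-s}. Three ingredients drive the argument: (a) a top-order weighted energy inequality for $f$ built on the $\CN+\CK$ splitting of $L$ and the anisotropic trilinear estimate for $\Ga$ from \cite{GR}, (b) macroscopic dissipation of $[a_\pm,b,c]$ and $E$ extracted from the local conservation laws and the Poisson coupling, and (c) a time-weighted decay estimate that trades extra velocity moments for algebraic time decay, using the $(1+t)^{-3/4}$ linear-VPB decay in $L^2_x$ from $L^1_x\cap L^2_x$ data (cf.~\cite{Strain11}).

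For the energy inequality I would apply $\pa^\al_\be$ to \eqref{perturbed eqn}, pair against $w_l^2(\al,\be)\pa^\al_\be f$ over ${\bf R}^3_x\times{\bf R}^3_v$, and exploit the identity for $\lag\CN f,f\rag$ together with the coercivity in~(iii) to generate the microscopic dissipation $\|w_l(\al,\be)\pa^\al_\be({\bf I}-{\bf P})f\|^2_{N^s_\ga}$; the compact part $\CK$ is absorbed after a standard velocity-ball splitting using the Pao decay of $\nu_2$. The weight definition $w_l(\al,\be)=w^{l+K-|\al|-|\be|}$ with $w=\lag v\rag^{-\ga}$ is tailored so that the two linear field terms $q\nabla_x\phi\cdot\nabla_v f$ and $\tfrac{q}{2} v\cdot\nabla_x\phi\, f$ each lose exactly one unit of weight, which is compensated by $w^{-1}\sim\lag v\rag^\ga$ from the weight hierarchy. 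The collision nonlinearity $\Ga(f,f)$ is controlled in $N^s_\ga$ by the trilinear estimate of \cite{GR} after commuting $w^\ell$ through the collision integral. Testing \eqref{perturbed eqn} against the macroscopic projections from \eqref{macro def} yields dissipation of $[a_\pm,b,c]$, and the Poisson equation \eqref{perturbed E} yields $\|\pa^\al E\|^2$ for $|\al|\le K-1$. The result is a differential inequality
\begin{equation*}
\tfrac{d}{dt}\CE_l(t)+\la\,\CD_l(t)\lesssim \sqrt{\CE_l(t)}\,\CD_l(t),
\end{equation*}
which, under the smallness of $\eps_0$, closes as $\CE_{l_0+l_1}(t)\lesssim\eps_0^2$, giving \eqref{thm.gl.c1}.

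For the decay bounds \eqref{thm.gl.c2}--\eqref{thm.gl.c3}, the soft-potential mismatch between $\CD_{l_0}$ and $\CE_{l_0}$ is absorbed via an interpolation of the form
\begin{equation*}
\CE_{l_0}(t)\lesssim \bigl(\CD_{l_0}(t)\bigr)^{\vth}\bigl(\CE_{l_0+l_1}(t)\bigr)^{1-\vth},
\end{equation*}
for a suitable $\vth\in(0,1)$; the exponent $l_1=\tfrac{5}{4(1-p)}\tfrac{\ga+2s}{\ga}$ is engineered so that the resulting power of $(1+t)$ matches $3/2+p$. Multiplying the energy inequality by $(1+t)^{3/2+p}$ at the high-order level, by $(1+t)^{3/2}$ at the full-energy level, and coupling with a Duhamel representation for the linearized VPB flow applied to the $Z_1$ and first-moment data in \eqref{thm.gl.a2}, I would close the composite functional
\begin{equation*}
X(t)=\sup_{0\le\tau\le t}\bigl\{\CE_{l_0+l_1}(\tau)+(1+\tau)^{\frac{3}{2}}\CE_{l_0}(\tau)+(1+\tau)^{\frac{3}{2}+p}\CE^h_{l_0}(\tau)\bigr\},
\end{equation*}
obtaining $X(t)\lesssim\eps_0^2+X(t)^{3/2}$ and hence $X(t)\lesssim\eps_0^2$, which is equivalent to \eqref{thm.gl.c1}--\eqref{thm.gl.c3}. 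The zero-mean and first-moment hypotheses on $\rho_{f_0}$ are used to control $\|\nabla_x\phi\|_{L^2}$ and the low-frequency part of the linear semigroup via Hardy--Littlewood--Sobolev, while the auxiliary weight $l_2>\tfrac{5(\ga+2s)}{4\ga}$ on $\|w^{l_2}f_0\|_{Z_1}$ is what makes the Duhamel source terms integrable in time.

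The main obstacle is the weighted trilinear control of $\Ga(f,f)$ in the anisotropic space $N^s_\ga$ when one factor carries the maximal weight $w^{l_0+l_1+K}$ along with a top-order space-velocity derivative: the metric $d(v,v')$ appearing in $|\cdot|_{N^s_\ga}$ does not commute cleanly with $w^\ell$, and for $\ga<-2$ the kinetic factor $|v-u|^\ga$ is critically singular, so the estimate only closes when the angular regularity $s$ is strong enough to supply the fractional gain needed to beat the kinetic loss. This is precisely the structural reason for the restriction to $s$ close to $1$ within the range $-3<\ga<-2s$, and it is the genuinely new technical point in carrying the cutoff strategy of \cite{DYZ-h, DYZ-s} over to the non-cutoff setting.
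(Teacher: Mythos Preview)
Your outline captures the broad architecture correctly, but there is a genuine gap in the energy inequality you claim, and a misattribution of the restriction $s\geq 1/2$.

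\medskip
\noindent\textbf{The energy inequality.} You assert
\[
\frac{d}{dt}\CE_l(t)+\la\,\CD_l(t)\lesssim \sqrt{\CE_l(t)}\,\CD_l(t),
\]
and justify it by saying that the two field terms $q\nabla_x\phi\cdot\nabla_v f$ and $\tfrac{q}{2}v\cdot\nabla_x\phi\,f$ ``each lose exactly one unit of weight, which is compensated by $w^{-1}\sim\lag v\rag^{\ga}$ from the weight hierarchy.'' This is only true when at least one $x$-derivative falls on $\phi$. When $\al_1=0$ in the Leibniz expansion of $\pa^\al_\be\big(\tfrac{q}{2}v\cdot\nabla_x\phi\,f\big)$, the weight hierarchy provides no gain, and the linear-in-$v$ growth of $v\cdot\nabla_x\phi$ against $w_l^2(\al,\be)\pa^\al_\be f$ cannot be absorbed by $\sqrt{\CE_l}\,\CD_l$. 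The paper handles this via Guo's multiplier $e^{\pm\phi}$: one pairs the equation against $e^{\pm\phi}w_l^2(\al,\be)\pa^\al_\be f_\pm$, so that the transport term and the undifferentiated $\pm\tfrac{1}{2}v\cdot\nabla_x\phi$ term cancel exactly. The price is the time derivative of $e^{\pm\phi}$, which produces the extra right-hand side $\|\pa_t\phi\|_{L^\infty}\CE_l(t)$. This term is \emph{not} bounded by $\sqrt{\CE_l}\,\CD_l$; instead one shows $\|\pa_t\phi\|_{L^\infty}^2\lesssim \CE^h_{l_0}(t)\lesssim (1+t)^{-\frac{3}{2}-p}X(t)$, uses $p>1/2$ to make it time-integrable, and applies Gronwall. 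This is precisely why the third component $(1+t)^{\frac{3}{2}+p}\CE^h_{l_0}$ appears in $X(t)$: without it you cannot close even the top-order bound \eqref{thm.gl.c1}. Your interpolation step and the time-weight multiplication come afterwards, in the paper via a Strain--Guo velocity splitting $\{\lag v\rag^{-\ga-2s}\leq t^{1-p}\}$ rather than a direct H\"older interpolation, but either device requires the correct energy inequality as input.

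\medskip
\noindent\textbf{The role of $s\geq 1/2$.} You locate the restriction $s\geq 1/2$ in the weighted trilinear collision estimate. In the paper the trilinear estimate (Lemma~\ref{estimates on nonop}) does require a modification of \cite{GR}, namely the redistribution $w^{2\ell}(v')\lesssim w^{2\ell}(u)+w^{2\ell}(v)$, but that argument places no constraint on $s$. The restriction $s\geq 1/2$ enters instead through the weighted estimates on the linear transport term $v\cdot\nabla_x f$ and the force terms: one writes $\pa_{e_i}=\pa_{e_i}^{1/2}\pa_{e_i}^{1/2}$ and needs the $H^s_v$ part of $N^s_\ga$ to absorb a half velocity derivative, and for $v\cdot\nabla_x\phi\,f$ one needs $|v_i|\,w_l(\al,\be)\lesssim \lag v\rag^{\ga+2s}w_l(\al-e_i,\be)$, i.e.\ $2s\geq 1$. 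So the ``genuinely new technical point'' is not where you placed it.
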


Some remarks are given as follows. Notice that $l_0$ can take zero while the choice of  other parameters $l_1$, $l_2$, $K$ and $p$ could not be optimal in order for initial data to have the weaker regularity and velocity moments. The restriction of those parameters is related to obtaining the following closed a priori estimate
$$
X(t)\lesssim \eps_0^2+ X^{\frac{3}{2}}(t) +X^2(t)
$$
with respect to the time-weighted energy norm $X(t)$ defined by
\begin{equation}\label{key.X}
\begin{split}
X(t)=\sup\limits_{0\leq \tau\leq t}\mathcal {E}_{l_0+l_1}(\tau)+\sup\limits_{0\leq\tau\leq t}(1+\tau)^{\frac{3}{2}}\mathcal {E}_{l_0}(\tau)+\sup\limits_{0\leq\tau\leq t}(1+\tau)^{\frac{3}{2}+p}\mathcal {E}^h_{l_0}(\tau).
\end{split}
\end{equation}
Here, the parameter $p$ is introduced to take care the time-decay of the high-order energy functional $\CE_{l_0}^{h}(t)$.  {The time decay rate $(1+t)^{-\frac{3}{2}-p}$ for $\CE_{l_0}^{h}(t)$ in \eqref{thm.gl.c3} is not optimal compared to the linearized system; see Theorem \ref{basic decay lemma}  given later on. It is then of interest to upgrade it to $(1+t)^{-5/2}$, that is to prove \eqref{thm.gl.c3} in the case of $p=1$, and we will make a further discussion of possibility at the end of the paper.
Moreover, similar results in the} hard potential case $\ga+2s\geq 0$ with $0<s<1$ could be considered in the simpler way, but for the soft potential case $-3<\ga<-2s$ in Theorem \ref{thm.gl}, the restriction $s\geq 1/2$ is necessary in our proof due to the technique of the approach; we will clarify this point later. The assumptions concerning \eqref{thm.gl.a1} and \eqref{thm.gl.a2} arise from the linearized analysis of the Vlasov-Poisson-Boltzmann system in order to assure the strong enough time-decay rate of the linearized solution operator; see \eqref{est.rho} in the proof of Theorem \ref{basic decay lemma}.

In what follows let us point out several key technical points in the proof of Theorem \ref{thm.gl}. First of all, we emphasize the role of the velocity weight $w_l(\al,\be)$ in  {\eqref{def.wab}.
Such weight was firstly introduced in \cite{SG1,SG2} to deal with the time decay of the Boltzmann equation for soft potentials on torus, and it was also used recently in \cite{Guo5} to investigate the Vlasov-Poisson-Landau system for Coulomb potentials.}
In fact, the linearized non cutoff Boltzmann operator  enjoys the anisotropic dissipation norm
\begin{equation}
\label{diss.norm}
\|f\|_{N^s_\ga}^2\gtrsim \|\lag v \rag^{\frac{\ga+2s}{2}} f\|^2+\|\lag v\rag^{\frac{\ga}{2}} (1-\De_v)^{\frac{s}{2}}f\|^2.
\end{equation}
Then, in terms of this dissipation norm, the choice of $w_l(\al,\be)$ should depend on the weighted estimates on the linear term $v\cdot \na_x f$ and two nonlinear terms $\na_x\phi\cdot \na_v f$ and $v\cdot \na_x\phi f$. For $v\cdot \na_x f$, one has to bound
\begin{equation}
\label{intro.t1}
\left(\pa_{\be-e_i}^{\al+e_i} f,  w_l^2(\al,\be) \pa_\be^\al f\right)
\end{equation}
with $|\al|+|\be|\leq K$ and $|\be|\geq 1$. For simplicity of presentation, let $f$ be purely microscopic, i.e.~$f=(\FI-\FP)f$. Since $|\be|\geq 1$, the trick is to do the splitting $\pa_{\be}^{\al}f=\pa_{e_i}\pa_{\be-e_i}^\al f$ in the inner product term above, write  in a rough way the first-order  velocity differentiation as
$\pa_{e_i}=\pa_{e_i}^{1/2}\pa_{e_i}^{1/2}$, and further use $w_l(\al,\be)=w_l(\al,\be-e_i)\lag v\rag^{\ga}$ to gain the degenerate velocity weight, so that \eqref{intro.t1} can be bounded by the dissipation norm
\begin{equation*}
\|\lag v\rag^{\frac{\ga}{2}} (1-\De_v)^{\frac{s}{2}}[w_l(\al,\be-e_i)\pa_{\be-e_i}^\al f]\|^2+\|\lag v\rag^{\frac{\ga}{2}} (1-\De_v)^{\frac{s}{2}}[w_l(\al+e_i,\be-e_i)\pa_{\be-e_i}^{{\al+e_i}}f]\|^2
\end{equation*}
up to some other controllable terms, where $s\geq 1/2$ was used. Notice that two terms in the sum above correspond to the second term on the right-hand side of \eqref{diss.norm}, and they also have velocity differentiation whose order is $|\be-e_i|$ less than $|\be|$.  For $\na_x\phi \cdot \na_v f$, one has to meet with the estimate on the trilinear inner product  term
\begin{equation*}
\left(\pa^{\al_1+e_i}\phi\pa_{\be+e_i}^{\al-\al_1} f,  w_l^2(\al,\be) \pa_\be^\al f\right)
\end{equation*}
with $|\al|+|\be|\leq K$ and $0<\al_1\leq \al$. For this term, we use the same trick to make estimates as for \eqref{intro.t1} by writing $\pa_{\be+e_i}^{\al-\al_1} f=\pa_{e_i}\pa_{\be}^{\al-\al_1} f$ and $\pa_{e_i}=\pa_{e_i}^{1/2}\pa_{e_i}^{1/2}$.  For $v\cdot \na_x \phi f$, one has to bound
\begin{equation*}
\left(v_i\pa^{\al_1+e_i}\phi \pa_{\be}^{\al-\al_1} f,  w_l^2(\al,\be) \pa_\be^\al f\right)
\end{equation*}
with $|\al|+|\be|\leq K$ and $0<\al_1\leq \al$. Since $\pa^{\al-\al_1} f$ losses at least one space differentiation compared to $\pa^\al f$, one can gain the velocity moment $\lag v\rag^{\ga}$ from $w_l(\al,\be)$ so that
\begin{equation*}
|v_i|w_l(\al,\be)\leq \lag v\rag^{\ga+2s}w_l(\al-\al_1,\be)
\end{equation*}
due to $s\geq 1/2$ once again. Therefore, one can use the first part on the right-hand side of \eqref{diss.norm} from the dissipation norm to bound
\begin{equation*}
\int_{{\bf R}^3}\int_{{\bf R}^3} |\pa^{\al_1+e_i}\phi|\cdot |\lag v\rag^{\frac{\ga+2s}{2}} w_l(\al-\al_1,\be) \pa_{\be}^{\al-\al_1} f|\cdot |\lag v\rag^{\frac{\ga+2s}{2}} w_l(\al,\be) \pa_{\be}^{\al} f|\,dxdv.
\end{equation*}

The second technical point concerns the $w_l(\al,\be)$-weighted estimate on the nonlinear term $\Ga(f,f)$.  The corresponding results
obtained in \cite{GR} can not be directly
applied here, since the velocity weight function depends on the total order of space-velocity differentiation. We will make some slight modifications involving
the distribution of weights. Essentially, whenever $|v'|^2\sim
|v|^2+|u|^2$, instead of using $
w^{2\ell}(v')\lesssim w^{2\ell}(u)w^{2\ell}(v),
$
we estimate $w^{2\ell}(v')$ as
$$
w^{2\ell}(v')\lesssim w^{2\ell}(u)+w^{2\ell}(v).
$$
The third point is to design the time-weighted norm $X(t)$ in \eqref{key.X} to capture the dispersive property of the Vlasov-Poisson-Boltzmann system in the case of the whole space ${\bf R}^3$. This trick has be been used in \cite{DuanUYZ} and the recent work \cite{DYZ-h,DYZ-s,DYZ-L}, where the key issue is to apply the time-decay property of solutions to close all the nonlinear energy estimates. Specifically, due to the technique of the approach, one has to deal with a term in the form of
\begin{equation*}
 \|\partial_t\phi\|_{L^\infty}\mathcal {E}_l(t)
\end{equation*}
which can not be absorbed by the energy dissipation norm even under the smallness assumption on the solution itself. Observe from the later proof that $\|\partial_t\phi\|_{L^\infty}$ is bounded by the high-order energy functional $\CE_l^{h}(t)$ and hence is time integrable as shown in \cite{Guo5}. Thus, it is natural to use a kind of time-weighted norm to close the a priori estimates. On the other hand, notice that the study of the time-decay property for the linearized system with or without the self-consistent forces is now well established; see \cite{DuanS1, DuanS2, YY-CMP} for hard potentials and \cite{Strain11, DYZ-s} for soft potentials; see also \cite{YZ2} in terms of the energy method only.

The rest of the paper is arranged as follows. In Section \ref{sec.2}, we list basic lemmas concerning the properties of $L$ and $\Ga$ in the functional framework of \cite{GR}. In Section \ref{sec.3}, we present series of the $w_l(\al,\be)$-weighted estimates on all the nonlinear terms. Section \ref{sec.4} and Section \ref{sec.5} are concerned with the linearized analysis for the time-decay property and the energy estimates to gain the macroscopic dissipation, respectively. In Section \ref{sec.6} we make series of the  a priori estimates through the energy method, and in Section \ref{sec.7} we complete the proof Theorem \ref{thm.gl}.

\section{Preliminaries}\label{sec.2}

In this section, we list two basic  lemmas which will be used in the
later proof. Recall $w=w(v)=\lag v \rag^{-\gamma}$ in \eqref{def.w}, and we always
suppose $-3<\ga<-2s$ and $1/2\leq s<1$. The first lemma concerns the
coercivity estimate on the linearized collision operators $L$; its
proof can be found in \cite[{pp.783, Lemma 2.6 and pp.829, Theorem
8.1}]{GR}.

\begin{Lemma}\label{estimate on L}
(i) It holds that
\begin{equation}\label{estimate on L2}
\left(Lg, g\right)\gtrsim
\left\|({\bf I-P})g\right\|^2_{N^{s}_{\gamma}}.
\end{equation}
(ii) Let $\ell\in {\bf R}$. There is $C>0$ such that
\begin{equation*}
\left(w^{2\ell}Lg,g\right)\gtrsim
\left\|w^{\ell}g\right\|^2_{N^{s}_{\gamma}}-C\left\|g\right\|^2_{L^2(B_C)}.
\end{equation*}
(iii) Let $\beta>0$, $\ell\in{\bf
R}$.  For any $\eta>0$, there are
$C_{\eta}>0$, $C>0$ such that
\begin{equation*}
\left(w^{2\ell}\partial_{\beta}Lg,\partial_{\beta}g\right)\gtrsim
\left\|w^{\ell}\partial_{\beta}g\right\|^2_{N^{s}_{\gamma}}
-\eta\sum\limits_{\beta_1\leq
\beta}\left\|w^{\ell}\partial_{\beta_1}g\right\|^2_{N^{s}_{\gamma}}
-C_{\eta}\left\|g\right\|^2_{L^2(B_C)}.
\end{equation*}
\end{Lemma}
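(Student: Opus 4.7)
The plan is to follow the strategy of Gressman--Strain \cite{GR}, exploiting the decomposition $L=\mathcal{N}+\mathcal{K}$ introduced just before this lemma. For part (i), I would first verify that $\mathcal{N}$ is coercive in the anisotropic norm. By construction,
$$
(\mathcal{N}g,g) = \sum_\pm \left\{ \tfrac{1}{2}\!\!\iiint B(v-u,\sigma)(g_\pm(v')-g_\pm(v))^2 \mu^{1/2}(u)\mu^{1/2}(u')\,du\,dv\,d\sigma + 2\!\int \nu_1(v)|g_\pm|^2\,dv \right\},
$$
and the first term, compared with the definition of $|f|_{N^s_\gamma}$, yields the fractional piece after controlling the geometry with the metric $d(v,v')$ via a Carleman-type representation, while the $\nu_1$-term provides the weighted $L^2_{\gamma+2s}$ piece. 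Then I would show that the compact part $\mathcal{K}$ can be absorbed into a small multiple of $(\mathcal{N}g,g)$ plus a bounded-region remainder $C\|g\|_{L^2(B_C)}^2$, using a cutoff argument splitting the kernel of $\mathcal{K}$ into a smooth, rapidly decaying piece (bounded by $L^2(B_C)$) and a remainder piece (absorbable by a small fraction of the coercive norm). Finally, to remove the $L^2(B_C)$ term on the microscopic part $g=(\mathbf{I}-\mathbf{P})g$, one uses the linearized $H$-theorem, i.e.\ a standard compactness/contradiction argument on the finite-codimensional space $\mathscr{N}^\perp$, noting that the null space of $L$ restricted to the two-species setting is exactly $\mathscr{N}$ as given in the introduction.

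For part (ii), the weight $w^{2\ell}$ is combined with the pre-post collisional change of variables $(v,u,\sigma)\leftrightarrow(v',u',\sigma')$ to symmetrize $(w^{2\ell}\mathcal{N}g,g)$; the leading contribution becomes
$$
\tfrac{1}{2}\!\!\iiint B(v-u,\sigma) w^{\ell}(v)w^{\ell}(v')(g(v')-g(v))^2 \mu^{1/2}(u)\mu^{1/2}(u')\,du\,dv\,d\sigma,
$$
up to commutator errors of the form $(g(v')-g(v))(w^\ell(v')-w^\ell(v))$ which, by a Taylor expansion in $v-v'$ paired with the Gaussian factor $\mu^{1/2}(u')$, are controlled by $\eta\|w^\ell g\|_{N^s_\gamma}^2 + C_\eta\|g\|_{L^2(B_C)}^2$. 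The weighted $\nu_1$-piece gives the remaining $|w^\ell g|_{L^2_{\gamma+2s}}^2$-type contribution. The compact part $\mathcal{K}$ is again localized: a cutoff of the kernel beyond $|v|\leq C$ provides rapid Gaussian decay that dominates $w^{2\ell}$ for any $\ell$, leaving a correction bounded by $\|g\|_{L^2(B_C)}^2$.

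Part (iii) follows by Leibniz: $\partial_\beta(Lg)=L(\partial_\beta g)+\sum_{\beta_1<\beta}C^{\beta_1}_{\beta}[\partial_{\beta-\beta_1},L]\partial_{\beta_1}g$. Applying (ii) to $\partial_\beta g$ in place of $g$ delivers the main positive term $\|w^\ell\partial_\beta g\|_{N^s_\gamma}^2$. The commutator terms generate expressions in which one derivative of $v$ falls on the Maxwellian factors inside $L$ (producing fast-decaying kernels) or on $B(v-u,\sigma)$ (handled via the kinetic-part bounds $|v|^{\gamma}$ and its $v$-derivatives). Pairing these against $w^{2\ell}\partial_\beta g$ and applying Cauchy--Schwarz with small parameter produces exactly the $\eta\sum_{\beta_1\leq\beta}\|w^\ell\partial_{\beta_1}g\|^2_{N^s_\gamma}$ tolerance and the $C_\eta\|g\|^2_{L^2(B_C)}$ remainder.

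The main obstacle is the commutator analysis between the non-integrable angular kernel and the polynomial weight $w^\ell$, because naive Taylor expansion in $v-v'$ diverges at grazing angles. The resolution, following \cite{GR}, is to work with the anisotropic metric $d(v,v')$: one separates the regime $d(v,v')\leq 1$ (where a second-order Taylor expansion gains $|v-v'|^2$, compensating the $\theta^{-1-2s}$ singularity since $s<1$) from $d(v,v')>1$ (which is bounded directly, using the Gaussian factor $\mu^{1/2}(u')$ for integrability). Once this geometric bookkeeping is carried out, the contributions that are not absorbable by the coercive norm remain localized in a bounded set in $v$, producing precisely the $L^2(B_C)$ remainder in the statement.
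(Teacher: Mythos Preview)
Your proposal is correct and follows the same approach as the paper, which does not give an independent proof but simply cites \cite[pp.~783, Lemma 2.6 and pp.~829, Theorem 8.1]{GR}; what you have outlined is precisely the Gressman--Strain argument referenced there, adapted in the obvious way to the two-species operator.
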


The second lemma concerns the  estimates on the nonlinear collision
operator $\Gamma$. We point out that the corresponding results
obtained in \cite[{pp.817, Lemma 6.1}]{GR} can not be directly
applied here. One has to make some slight modifications involving
the distribution of weights. In fact, for the weighted estimate on
the triple inner product term  $\langle
w^{2\ell}\partial^{\alpha}_{\beta}\Ga(g_1,g_2),
\partial^{\alpha}_{\beta}g_{3}\rangle$,  the weight $w^{2\ell}$ was assigned to every one of the three functions $g_1,g_2$ and $g_3$ in \cite{GR}.  When the integration with respect to space variable is further taken,
Sobolev's inequality must be used to control the $L_x^\infty$-norm of either $g_1$ or $g_2$ so that the order of $x$-derivatives should be lifted. However, the lifting is dangerous in the case when  the weight $w_l(\alpha,\beta)$  is used for soft potentials. This can be seen by noticing that $\ell$ depends also on the order of $x$-derivatives and hence the higher order $x$-derivatives are associated with the weaker weight.

\begin{Lemma}\label{estimates on nonop}
Let $g_i=[g_{i,+}, g_{i,-}]\in C_{0}^{\infty}({\bf R}^3, {\bf R}^2),$ $1\leq i\leq3$, and let
$|\alpha|+|\beta|\leq K$ with $\al=\al_1+\al_2$ and $(\be_1,\be_2)\leq
\beta$. Then for any $\ell \geq0$ and $m\geq0$,

\medskip
\noindent(i) when $|\alpha_1|+|\beta_1|\leq K/2$,
\begin{equation}\label{nonop1}
\begin{split}
\Big|\big(w^{2\ell}&\partial^{\alpha}_{\beta}\Gamma_{\pm}(g_1,g_2),
\partial^{\alpha}_{\beta}g_{3,\pm}\big)\Big|\\
\lesssim& \sum\limits_{\alpha_1+\alpha_2= \alpha\atop{(\beta_{1},
\beta_{2}) \leq \beta}}\int_{{\bf R}^{3}}
\left|\partial^{\alpha_1}_{\beta_{1}}g_{1}\right|_{2}
\left|w^{\ell}\partial^{\alpha_2}_{\beta_{2}}g_{2}\right|_{N^{s}_{\gamma}}
\left|w^{\ell}\partial^{\alpha}_{\beta}g_{3}\right|_{N^{s}_{\gamma}}dx\\
&+\sum\limits_{\alpha_1+\alpha_2= \alpha\atop{(\beta_{1}, \beta_{2})
\leq \beta}}\int_{{\bf R}^{3}}
\left|w^{\ell}\partial^{\alpha_1}_{\beta_{1}}g_{1}\right|_{2}
\left|\partial^{\alpha_2}_{\beta_{2}}g_{2}\right|_{N^{s}_{\gamma}}
\left|w^{\ell}\partial^{\alpha}_{\beta}g_{3}\right|_{N^{s}_{\gamma}}dx
\\
&+\sum\limits_{\alpha_1+\alpha_2= \alpha\atop{(\beta_{1}, \beta_{2})
\leq \beta}}\int_{{\bf R}^{3}}
\left|w^{-m}\partial^{\alpha_1}_{\beta_{1}}g_{1}\right|_{H^{2}}
\left|w^{\ell}\partial^{\alpha_2}_{\beta_{2}}g_{2}\right|_{N^{s}_{\gamma}}
\left|w^{\ell}\partial^{\alpha}_{\beta}g_{3}\right|_{N^{s}_{\gamma}}dx,
\end{split}
\end{equation}

\medskip
\noindent(ii)
when $|\alpha_1|+|\beta_1|\geq K/2$,
\begin{equation}\label{nonop2}
\begin{split}
\Big|\big(w^{2\ell}&\partial^{\alpha}_{\beta}\Gamma_{\pm}(g_1,g_2),
\partial^{\alpha}_{\beta}g_{3,\pm}\big)\Big|\\
\lesssim& \sum\limits_{\alpha_1+\alpha_2= \alpha\atop{(\beta_{1},
\beta_{2}) \leq \beta}}\int_{{\bf R}^{3}}
\left|\partial^{\alpha_1}_{\beta_{1}}g_{1}\right|_{2}
\left|w^{\ell}\partial^{\alpha_2}_{\beta_{2}}g_{2}\right|_{N^{s}_{\gamma}}
\left|w^{\ell}\partial^{\alpha}_{\beta}g_{3}\right|_{N^{s}_{\gamma}}dx\\
&+\sum\limits_{\alpha_1+\alpha_2= \alpha\atop{(\beta_{1}, \beta_{2})
\leq \beta}}\int_{{\bf R}^{3}}
\left|w^{\ell}\partial^{\alpha_1}_{\beta_{1}}g_{1}\right|_{2}
\left|\partial^{\alpha_2}_{\beta_{2}}g_{2}\right|_{N^{s}_{\gamma}}
\left|w^{\ell}\partial^{\alpha}_{\beta}g_{3}\right|_{N^{s}_{\gamma}}dx
\\
&+\sum\limits_{\alpha_1+\alpha_2= \alpha\atop{(\beta_{1}, \beta_{2})
\leq \beta}}\int_{{\bf R}^{3}}
\left|w^{-m}\partial^{\alpha_1}_{\beta_{1}}g_{1}\right|_{2}
\left|w^{\ell}\partial^{\alpha_2}_{\beta_{2}}g_{2}\right|_{N_{\gamma,2}^{s}}
\left|w^{\ell}\partial^{\alpha}_{\beta}g_{3}\right|_{N^{s}_{\gamma}}dx.
\end{split}
\end{equation}
\end{Lemma}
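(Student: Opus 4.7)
The plan is to adapt the proof of the Gressman--Strain trilinear estimate in \cite{GR} to the asymmetric weight distribution imposed by the index-dependent weight $w_l(\alpha,\beta)$. First I would apply the Leibniz rule in both space and velocity variables to the inner product $(w^{2\ell}\partial^\alpha_\beta \Gamma_\pm(g_1,g_2),\partial^\alpha_\beta g_{3,\pm})$. Spatial differentiation just splits $\partial^\alpha$ into $\partial^{\alpha_1}$ acting on $g_1$ and $\partial^{\alpha_2}$ on $g_2$; velocity differentiation, because $\partial_\beta$ hits the $u$-dependent factor $\mu^{1/2}(u)$ inside the kernel of $\mathscr{T}$, produces a finite sum of operators $\mathscr{T}_{\beta_0}(\partial_{\beta_1}g_1,\partial_{\beta_2}g_2)$ whose kernels still enjoy Gaussian decay in $u$. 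This reduces matters to bounding triple integrals of the form
\begin{equation*}
\int_{{\bf R}^3}\bigl(w^{2\ell}\mathscr{T}_{\beta_0}(\partial^{\alpha_1}_{\beta_1}g_1,\partial^{\alpha_2}_{\beta_2}g_2),\,\partial^\alpha_\beta g_3\bigr)_{L^2_v}\,dx
\end{equation*}
for all admissible $\alpha_1+\alpha_2=\alpha$, $(\beta_1,\beta_2)\leq\beta$.

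The key departure from \cite{GR} is the treatment of the weight $w^{2\ell}(v)$. Writing this inner product through the pre-post collisional change of variables introduces $w^{2\ell}(v')$ into the kernel, and the standard multiplicative bound $w^{2\ell}(v')\lesssim w^{2\ell}(v)w^{2\ell}(u)$ would force $g_1$ and $g_2$ to each carry the full weight, which is incompatible with an index-dependent weight where higher-order factors carry weaker weights. Instead I would use the additive bound
\begin{equation*}
w^{2\ell}(v')\lesssim w^{2\ell}(v)+w^{2\ell}(u),
\end{equation*}
which follows from $|v'|^2\leq |v|^2+|u|^2$ (a consequence of $\cos\theta\geq 0$) together with $-\gamma>0$, and symmetrically for $w^{2\ell}(u')$. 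After substituting this split, the $w^{2\ell}(u)$ portion is absorbed into the Gaussian factor $\mu^{1/2}(u)$ surviving in the kernel, effectively freeing $g_1$ of the weight and accounting for the third term on the right-hand side of \eqref{nonop1} and \eqref{nonop2} with the arbitrarily negative weight $w^{-m}$. The $w^{2\ell}(v)$ portion is regrouped as $w^\ell(v)w^\ell(v)$, one factor distributed to $g_2$ and one kept on $g_3$, which yields the first two terms on the right-hand side in which $w^\ell$ appears on either $g_1$ or $g_2$.

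With the weight now correctly distributed, each of the resulting trilinear forms has the Hilbert-space structure of \cite[Sections 5--6]{GR}, so I would invoke the Carleman-representation cancellation argument there verbatim to produce the $N^s_\gamma$-seminorms on the two factors bearing $w^\ell$; the third (unweighted) factor contributes its $L^2$ or $H^2$ norm pointwise in $x$. The final step is to pull out an $L^\infty_x$ estimate on the factor with fewer derivatives via Sobolev embedding $H^2_x\hookrightarrow L^\infty_x$: when $|\alpha_1|+|\beta_1|\leq K/2$ the embedding is placed on $g_1$, yielding the $|w^{-m}\partial^{\alpha_1}_{\beta_1}g_1|_{H^2}$ factor of \eqref{nonop1}; when $|\alpha_1|+|\beta_1|\geq K/2$ it is instead placed on $g_2$, and the two extra $v$-derivatives produced by the embedding combined with the $N^s_\gamma$-seminorm give the $|w^\ell\partial^{\alpha_2}_{\beta_2}g_2|_{N^s_{\gamma,2}}$ factor of \eqref{nonop2}.

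The main obstacle will be verifying that the additive split $w^{2\ell}(v')\lesssim w^{2\ell}(v)+w^{2\ell}(u)$ survives the Carleman cancellation intact, since that argument exploits a delicate balance between the singular kernel $1/d(v,v')^{3+2s}$ and the difference $g_3(v')-g_3(v)$. The regrouping $w^{2\ell}(v)=w^\ell(v)w^\ell(v)$ with only one factor kept on $g_3$ produces commutators of the form $[w^\ell,\,\cdot\,]$ acting on differences; their pointwise size is $\langle v\rangle^{-1}w^\ell(v)$, which is controlled by the velocity moment $\langle v\rangle^{(\gamma+2s+1)/2}$ already hidden inside the $N^s_\gamma$-seminorm, so the commutator contributions are of lower order and can be absorbed into the three main terms on the right-hand side.
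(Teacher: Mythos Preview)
Your main idea—the additive split $w^{2\ell}(v')\lesssim w^{2\ell}(v)+w^{2\ell}(u)$ in place of the multiplicative one—is exactly the key modification the paper makes relative to \cite{GR}, and the reduction via Leibniz to trilinear forms $\langle w^{2\ell}\mathscr{T}_{\beta_\mu}(\cdot,\cdot),\cdot\rangle$ is correct.

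However, you have misread both the origin of the $H^2$ factor and the meaning of the dichotomy (i)/(ii). The $H^2$ in the third term of \eqref{nonop1} is the \emph{velocity} Sobolev norm $|w^{-m}h|_{H^2}=\sum_{|\beta'|\leq 2}|w^{-m}\partial_{\beta'}h|_{L^2_v}$, not $H^2_x$, and the right-hand sides of \eqref{nonop1}--\eqref{nonop2} are still $x$-integrals, so no spatial Sobolev embedding enters at this stage. The third term does not come from the $w^{2\ell}(u)$ piece of the additive split being absorbed by the Gaussian; it comes from the singular region $|v-u|\leq 1$, where $\mu^{1/4}(u')w^{2\ell}(v')$ decays exponentially in both $u$ and $v$ (so the weight $w^{-m}$ is free) but the kinetic singularity $|v-u|^{\gamma+2s}$ forces one factor into $L^\infty_v$, hence two extra \emph{velocity} derivatives via Sobolev embedding in $v$. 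The additive split is used only on the complementary region $|v-u|\geq 1$ (together with the further case analysis $|v'|^2\gtrless\tfrac12(|v|^2+|u|^2)$), and there it produces the \emph{first two} terms of \eqref{nonop1}: the $w^{2\ell}(v)$ piece puts $w^\ell$ on $g_2$, the $w^{2\ell}(u)$ piece puts $w^\ell$ on $g_1$. The dichotomy (i)/(ii) is then simply which of the two velocity-space estimates in Proposition~\ref{prop.non}(iii), namely \eqref{nonopp1} versus \eqref{nonopp2}, one selects for the singular region—i.e.\ whether the two extra $v$-derivatives land on $g_1$ or on $g_2$—so that in later applications they fall on the factor with fewer total derivatives.
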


To prove the lemma above, we need to make some preparations by
recalling some notations used in \cite[{pp.791--792}]{GR}.
Notice (\ref{simple nonop def}) for the definition of
$\mathscr{T}$. Consider the following inner product
\begin{equation*}
\left\langle w^{2\ell}\partial^{\alpha}_{\beta}\mathscr{T}(h_1,h_2),
\partial^{\alpha}_{\beta}h_3\right\rangle=\sum\limits_{\beta_1+\beta_2+\beta_{\mu}=\beta}
\sum\limits_{\alpha_1+\alpha_1=\alpha}C_{\alpha,\alpha_1}^{\beta,\beta_1,\beta_{\mu}}
\left\langle w^{2\ell}\mathscr{T}_{\beta_{\mu}}(\partial_{\beta_1}^{\alpha_1}h_1,\partial_{\beta_2}^{\alpha_2}h_2),
\partial^{\alpha}_{\beta}h_3\right\rangle,
\end{equation*}
where
$$
\mathscr{T}_{\beta_{\mu}}(h_1,h_2)=
\int_{{\bf R}^3}du\int_{{\bf S}^2}d\si\, B(v-u,\si)\left[\partial_{\beta_{\mu}}\mu^{1/2}(u)\right]\left[h_1(u')h_2(v')-h_1(u)h_2(v)\right].
$$
Let $\{\eta_\kappa\}^{\kappa=+\infty}_{\kappa=-\infty}$ be a partition on unity on $(0,\infty)$ such that
$|\eta_\kappa|_\infty\leq1$ and ${\rm supp}\,(\eta_\kappa)\subset \left[2^{-\kappa-1},2^{-\kappa}\right].$ For each $\kappa$
we use the notation
\begin{equation}
\label{def.B.ka}
B_\kappa=B(v-u,\si)\mathbbm{b}\left(\left\langle\frac{v-u}{|v-u|},\si\right\rangle\right)\eta_\kappa(|v-v'|).
\end{equation}
We now denote
\begin{eqnarray*}
\begin{split}
\mathscr{T}_+^{\kappa,\ell}(h_1,h_2,h_3)=&\int_{{\bf R}^3}du\int_{{\bf R}^3}dv\int_{{\bf S}^2}d\si\,B_\kappa(v-u,\si)h_1(u)h_2(v)\\
&\times\left[\partial_{\beta_\mu}\mu^{1/2}(u')\right]h_3(v')w^{2\ell}(v'),
\end{split}
\end{eqnarray*}
\begin{eqnarray*}
\begin{split}
\mathscr{T}_-^{\kappa,\ell}(h_1,h_2,h_3)=&\int_{{\bf R}^3}du\int_{{\bf R}^3}dv\int_{{\bf S}^2}d\si\, B_\kappa(v-u,\si)h_1(u)h_2(v)\\
&\times\left[\partial_{\beta_\mu}\mu^{1/2}(u)\right]h_3(v)w^{2\ell}(v).
\end{split}
\end{eqnarray*}
On the other hand, we express the collision operator (\ref{simple nonop def}) using its dual formulation as in \cite[A1]{GR}.
In fact, after a transformation, one can put cancellations on the function $h_2$ as follows
\begin{equation}\label{Carleman type}
\begin{split}
\big\langle& w^{2\ell}\mathscr{T}(h_1,h_2),
h_3\big\rangle\\ =&\int_{{\bf R}^3}du\int_{{\bf R}^3}dv'\int_{E_u^{v'}}d\pi_v\, \tilde{B}(v-u,\si)h_1(u)h_3(v')w^{2\ell}(v')\left[\mu^{1/2}(u')h_2(v)-\mu^{1/2}(u)h_2(v')\right]\\
&+\mathscr{T}_\ast^\ell(h_1,h_2,h_3),
\end{split}
\end{equation}
where $u'=u+v-v'$, the kernel $\tilde{B}$ is given by
\begin{equation*}
\tilde{B}=4\frac{B\left(v-u, {\frac{2v'-v-u}{|2v'-v-u|}}\right)}{|v'-u||v-u|},
\end{equation*}
while the corresponding dual operator $\mathscr{T}_\ast^\ell$ given by
\begin{equation*}
\begin{split}
\mathscr{T}_\ast^\ell(h_1,h_2,h_3)=&\int_{{\bf R}^3}dv'\,h_2(v')h_3(v')w^{2\ell}(v')
\int_{{\bf R}^3}du\,h_1(u)\left[\partial_{\beta_\mu}\mu^{1/2}(u)\right]\\
&\times\int_{E_u^{v'}}d\pi_v\, \tilde{B}\left(1-\frac{|v'-u|^{3+\gamma}}{|v-u|^{3+\gamma}}\right).
\end{split}
\end{equation*}
is not differential at longer. In those integrals, $d\pi_v$ means
the Lebesgue measure on the 2-dimensional hyperplane $E_u^{v'}$
defined by $E_u^{v'}=\left\{v\in{\bf R}^3: \langle
u-v',v-v'\rangle=0\right\}$, and $v$ is the variable of integration.
Note that in (\ref{Carleman type}) we use $\mathscr{T}_\ast^\ell$
with $\beta_\mu=0$.

With the observation above, one can use the following alternative representations for $\mathscr{T}_+^{\kappa,\ell}$
as well as $\mathscr{T}_\ast^{\kappa,\ell}$:
\begin{eqnarray*}
\begin{split}
\mathscr{T}_+^{\kappa,\ell}(h_1,h_2,h_3)=&\int_{{\bf R}^3}du\int_{{\bf R}^3}dv'\int_{E_u^{v'}}d\pi_v\, \tilde{B}_\kappa(v-u,\si)\\
&\times h_1(u)h_2(v)\left[\partial_{\beta_\mu}\mu^{1/2}(u')\right]h_3(v')w^{2\ell}(v'),
\end{split}
\end{eqnarray*}
\begin{eqnarray*}
\begin{split}
\mathscr{T}_\ast^{\kappa,\ell}(h_1,h_2,h_3)=&\int_{{\bf R}^3}du\int_{{\bf R}^3}dv'\int_{E_u^{v'}}d\pi_v\, \tilde{B}_\kappa(v-u,\si)\\
&\times h_1(u)h_2(v')\left[\partial_{\beta_\mu}\mu^{1/2}(u)\right]h_3(v')w^{2\ell}(v').
\end{split}
\end{eqnarray*}
Then for $h_1, h_2, h_3\in \mathcal {S}({\bf R}^3)$, the pre-post collisional change of variables, the dual representation, and the previous calculation guarantee that
\begin{eqnarray}\label{two key form}
\begin{split}
\left\langle w^{2\ell}\mathscr{T}_{\beta'}(h_1,h_2),
h_3\right\rangle=&\sum\limits_{\kappa=-\infty}^{+\infty}
\left\{\mathscr{T}_+^{\kappa,\ell}(h_1,h_2,h_3)-\mathscr{T}_-^{\kappa,\ell}(h_1,h_2,h_3)\right\}\\
=&\mathscr{T}_\ast^\ell(h_1,h_2,h_3)+\sum\limits_{\kappa=-\infty}^{+\infty}
\left\{\mathscr{T}_+^{\kappa,\ell}(h_1,h_2,h_3)-\mathscr{T}_\ast^{\kappa,\ell}(h_1,h_2,h_3)\right\}.
\end{split}
\end{eqnarray}
To the end $\zeta(v)$ denotes an arbitrary smooth function satisfying
for some positive constant $\la $ that
\begin{equation}\label{expo fun}
|\zeta(v)|\sim e^{-\la |v|^2}.
\end{equation}
Now we collect estimates for the operators $\mathscr{T}_+^{\kappa,\ell}$, $\mathscr{T}_-^{\kappa,\ell}$ and $\mathscr{T}_\ast^{\kappa,\ell}$ appearing in \eqref{two key form}, which can be used to prove  Lemma \ref{estimates on nonop}. Notice that only the soft potential case $-3<\gamma<-2s$ with $1/2\leq s<1$ is considered here.

\begin{Proposition}\label{prop.non}
Let  $\kappa$ be an integer, $m\geq0$, $\ell\in{\bf R}$ and
$\zeta$ defined by (\ref{expo fun}).

\medskip
\noindent (i)
\begin{equation*}
\left|\mathscr{T}_-^{\kappa,\ell}(h_1,h_2,h_3)\right|\lesssim 2^{2s\kappa}
\left|w^{-m}h_1\right|_{H^{2}}\left|w^{\ell}h_2\right|_{L^2_{\gamma+2s}}\left|w^{\ell}h_3\right|_{L^2_{\gamma+2s}},
\end{equation*}
\begin{equation*}
\left|\mathscr{T}_-^{\kappa,\ell}(h_1,h_2,h_3)\right|\lesssim
2^{2s\kappa}
\left|w^{-m}h_1\right|_{L^2}\left|w^{\ell}h_2\right|_{H^{2}_{\gamma+2s}}\left|w^{\ell}h_3\right|_{L^2_{\gamma+2s}},
\end{equation*}
and
\begin{equation*}
\left|\mathscr{T}_-^{\kappa,\ell}(h_1,h_2,\zeta)\right|+\left|\mathscr{T}_-^{\kappa,\ell}(h_1,\zeta,h_2)\right|\lesssim 2^{2s\kappa}
\left|w^{-m}h_1\right|_{L^2}\left|w^{-m}h_2\right|_{L^2}.
\end{equation*}

\medskip
\noindent (ii)
\begin{equation*}
\left|\mathscr{T}_\ast^{\kappa,\ell}(h_1,h_2,h_3)\right|\lesssim 2^{2s\kappa}
\left|w^{-m}h_1\right|_{H^{2}}\left|w^{\ell}h_2\right|_{L^2_{\gamma}}\left|w^{\ell}h_3\right|_{L^2_{\gamma}},
\end{equation*}
\begin{equation*}
\left|\mathscr{T}_\ast^{\kappa,\ell}(h_1,h_2,h_3)\right|\lesssim
2^{2s\kappa}
\left|w^{-m}h_1\right|_{L^2}\left|w^{\ell}h_2\right|_{H^{2}_{\gamma}}\left|w^{\ell}h_3\right|_{L^2_{\gamma}},
\end{equation*}
and
\begin{equation*}
\left|\mathscr{T}_\ast^{\kappa,\ell}(h_1,h_2,\zeta)\right|+\left|\mathscr{T}_\ast^{\kappa,\ell}(h_1,\zeta,h_2)\right|\lesssim 2^{2s\kappa}
\left|w^{-m}h_1\right|_{L^2}\left|w^{-m}h_2\right|_{L^2}.
\end{equation*}

\medskip
\noindent (iii)
\begin{equation}\label{nonopp1}
\begin{split}
\left|\mathscr{T}_+^{\kappa,\ell}(h_1,h_2,h_3)\right|\lesssim& 2^{2s\kappa}
\left|h_1\right|_{L^2}\left|w^{\ell}h_2\right|_{L^2_{\gamma+2s}}
\left|w^{\ell}h_3\right|_{L^2_{\gamma+2s}}\\
&+2^{2s\kappa}
\left|w^{\ell}h_1\right|_{L^2}\left|h_2\right|_{L^2_{\gamma+2s}}
\left|w^{\ell}h_3\right|_{L^2_{\gamma+2s}}\\
&+2^{2s\kappa}
\left|w^{-m}h_1\right|_{H^{2}}\left|w^{\ell}h_2\right|_{L^2_{\gamma+2s}}
\left|w^{\ell}h_3\right|_{L^2_{\gamma+2s}},
\end{split}
\end{equation}
\begin{equation}\label{nonopp2}
\begin{split}
\left|\mathscr{T}_+^{\kappa,\ell}(h_1,h_2,h_3)\right|\lesssim& 2^{2s\kappa}
\left|h_1\right|_{L^2}\left|w^{\ell}h_2\right|_{L^2_{\gamma+2s}}
\left|w^{\ell}h_3\right|_{L^2_{\gamma+2s}}\\
&+2^{2s\kappa}
\left|w^{\ell}h_1\right|_{L^2}\left|h_2\right|_{L^2_{\gamma+2s}}
\left|w^{\ell}h_3\right|_{L^2_{\gamma+2s}}\\
&+2^{2s\kappa}
\left|w^{-m}h_1\right|_{L^2}\left|w^{\ell}h_2\right|_{H^{2}_{\gamma+2s}}
\left|w^{\ell}h_3\right|_{L^2_{\gamma+2s}},
\end{split}
\end{equation}
and
\begin{equation}\label{nonoppexpo}
\left|\mathscr{T}_+^{\kappa,\ell}(h_1,h_2,\zeta)\right|\lesssim 2^{s\kappa}
\left|w^{-m}h_1\right|_{L^2}\left|w^{-m}h_2\right|_{L^2}.
\end{equation}

\end{Proposition}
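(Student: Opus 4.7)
The plan is to follow the dyadic analysis of \cite[Sections 4 and 5]{GR}, where analogous estimates are established but with the weight $w^{2\ell}$ distributed to all three factors via the multiplicative inequality $w^{2\ell}(v') \lesssim w^{2\ell}(u) w^{2\ell}(v)$. Each of $\mathscr{T}_\pm^{\kappa,\ell}$ and $\mathscr{T}_\ast^{\kappa,\ell}$ will be estimated by combining (a) the dyadic gain $2^{2s\kappa}$ arising from the angular cancellation on the annulus $|v-v'|\sim 2^{-\kappa}$, (b) Gaussian localization in $u$ through $\partial_{\beta_\mu}\mu^{1/2}(u)$ or $\mu^{1/2}(u')$, and (c) the Sobolev embedding $H^2({\bf R}^3)\hookrightarrow L^\infty({\bf R}^3)$ applied to one of the two non-integrated factors. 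The distinction between the two parallel inequalities stated for each operator corresponds to whether the $L^\infty$-bound is placed on $h_1$ or on $h_2$.

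For $\mathscr{T}_-^{\kappa,\ell}$ the weight $w^{2\ell}(v)$ already sits at the pre-collisional velocity and pairs directly with $h_2(v)$ and $h_3(v)$; the standard Cauchy--Schwarz in $u$ against the Gaussian factor, combined with the dyadic kernel bound of \cite[Lemma 4.1]{GR}, yields the three assertions of item (i), and the $\zeta$-version is immediate since the Gaussian decay of $\zeta$ absorbs any polynomial weight. The estimates for $\mathscr{T}_\ast^{\kappa,\ell}$ follow in the same way after invoking the Carleman-type representation \eqref{Carleman type}; the pointwise control on the multiplier $1-|v'-u|^{3+\gamma}/|v-u|^{3+\gamma}$ from \cite[Appendix]{GR} carries no $2s$-index improvement, which is why the output moments in item (ii) live in $L^2_\gamma$ rather than $L^2_{\gamma+2s}$.

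The novelty lies in $\mathscr{T}_+^{\kappa,\ell}$, where $w^{2\ell}(v')$ is attached to the post-collisional velocity while $h_1,h_2$ live at $u,v$. Since $\ell\geq0$ and $w(v)=\langle v\rangle^{-\gamma}$ with $-\gamma>0$, the conservation $|v'|^2+|u'|^2=|v|^2+|u|^2$ together with $(a+b)^\alpha\lesssim a^\alpha+b^\alpha$ for $\alpha\geq0$ gives the additive splitting
\begin{equation*}
w^{2\ell}(v')\lesssim w^{2\ell}(v)+w^{2\ell}(u),
\end{equation*}
which replaces the multiplicative splitting of \cite{GR} and prevents the weight from being forced onto all three factors simultaneously. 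Substituting produces two integrals: one in which $w^{2\ell}(v)$ multiplies $h_2$, another in which $w^{2\ell}(u)$ multiplies $h_1$ through the Gaussian. Each is controlled by the same Cauchy--Schwarz/Sobolev argument as for $\mathscr{T}_-^{\kappa,\ell}$, and the two alternatives yield the first two summands of \eqref{nonopp1}--\eqref{nonopp2}; the third summand arises from applying $H^2\hookrightarrow L^\infty$ to $h_1$ (giving $|w^{-m}h_1|_{H^2}$ as in \eqref{nonopp1}) or to $h_2$ (giving $|w^{\ell} h_2|_{H^2_{\gamma+2s}}$ as in \eqref{nonopp2}). The refinement \eqref{nonoppexpo} gains the factor $2^{-s\kappa}$ by exploiting the first-order Taylor cancellation $|\zeta(v')-\zeta(v)|\lesssim 2^{-\kappa}\|\nabla\zeta\|_{L^\infty}$ together with Gaussian absorption of the polynomial weights.

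The main obstacle I anticipate is the careful bookkeeping of the additive splitting so that every resulting term conforms to the stated right-hand sides, and verifying that the restriction $1/2\leq s<1$ is invoked only where genuinely needed---specifically in the conversion $|v-u|^{2s}\lesssim\langle v\rangle^{2s}\langle u\rangle^{2s}$, which is what pins the output moment space to $L^2_{\gamma+2s}$. Once the piecewise bounds are in place, the summation over $\kappa\in{\bf Z}$ that reassembles the full operator through \eqref{two key form} is standard: for $\kappa\leq0$ the factor $2^{2s\kappa}$ is directly summable, while for $\kappa>0$ the shrinking size of the angular annulus supplies a competing bound, as in \cite[Lemma 4.1]{GR}.
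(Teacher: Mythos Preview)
Your approach is essentially the paper's: items (i), (ii) and \eqref{nonoppexpo} are deferred to \cite{GR}, while for \eqref{nonopp1}--\eqref{nonopp2} the additive splitting $w^{2\ell}(v')\lesssim w^{2\ell}(u)+w^{2\ell}(v)$ replaces the multiplicative one. However, your account of how the three summands on the right of \eqref{nonopp1} arise is not structurally correct. The paper first separates the domain into $|v-u|\geq 1$ and $|v-u|<1$. On $|v-u|\geq 1$ the factor $|v-u|^{\gamma+2s}$ is bounded and no Sobolev embedding is used: after Cauchy--Schwarz one invokes the further dichotomy $|v'|^2\lessgtr\tfrac12(|v|^2+|u|^2)$ (the former gives exponential decay in both $u,v$ via $\mu^{1/4}(u')\leq\mu^{1/8}(u)\mu^{1/8}(v)$, the latter is exactly where the additive weight splitting applies), and this already yields the first two summands. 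The third summand, carrying the $H^2$ norm, comes entirely from the singular region $|v-u|<1$: there all four velocities are comparable, so $\mu^{1/4}(u')w^{2\ell}(v')\lesssim\mu^\delta(u)\mu^\delta(v)$, and the remaining singularity $|v-u|^{\gamma+2s}$ is handled by taking $\left|h_1\mu^{\delta/2}\right|_{L^\infty}\lesssim |w^{-m}h_1|_{H^2}$. Your write-up conflates the two regions and makes it sound as though the $H^2$ term is a variant of the additive splitting rather than the treatment of the kinetic singularity. Also, the restriction $1/2\leq s<1$ is not used anywhere in this proposition; it enters only in the weighted estimates of Section~\ref{sec.3}.
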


\begin{proof}
First of all, notice that (i), (ii) and \eqref{nonoppexpo} in (iii)
are the same as in \cite[{pp.803--804, Proposition 4.1, 4.2, 4.4, and
pp.808, Proposition 4.8}]{GR}, and only \eqref{nonopp1} and
\eqref{nonopp2} are different. For brevity, we prove (\ref{nonopp1})
only. The key point is to assign the velocity weight to $h_1$ and
$h_2$ in a better way. The following inequalities will be frequently
used in the later proof:
\begin{equation}\label{kernel estimates}
\int_{{\bf S}^2}B_\kappa(v-u,\si)\,d\si\lesssim |v-u|^{\gamma}\int_{2^{-\kappa-1}|v-u|^{-1}}^{2^{-\kappa}|v-u|^{-1}}\theta^{-1-2s} d\theta\lesssim 2^{2s\kappa}|v-u|^{\gamma+2s},
\end{equation}
where  we recall \eqref{def.B.ka} and the geometric relation $|v'-v|=|v-u|\sin \frac{\theta}{2}$.

On the region $|v-u|\geq1$, the singularity of $|v-u|^{\gamma+2s}$ is absent.
Thus, by Cauchy-Schwarz's inequality and (\ref{kernel estimates}),
we have
\begin{equation}\label{compute nonopp1}
\begin{split}
\left|\mathscr{T}_+^{\kappa,\ell}(h_1,h_2,h_3)\right|\lesssim&
2^{2s\kappa}\left(\int_{{\bf R}^3}du\int_{{\bf R}^3}dv\,|v-u|^{2(\gamma+2s)}
\langle v'\rangle^{-\gamma-2s}\mu^{1/4}(u') h_3(v')w^{2\ell}(v')\right)^{\frac{1}{2}}\\
&\times\left(\int_{{\bf R}^3}du\int_{{\bf R}^3}dv\, h_1(u)h_2(v)\langle v'\rangle^{\gamma+2s}\mu^{1/4}(u')w^{2\ell}(v')\right)^{\frac{1}{2}}\\
\lesssim&
2^{2s\kappa}\left(\int_{{\bf R}^3}dv'\,
\langle v'\rangle^{\gamma+2s}|h_3(v')|^2w^{2\ell}(v')\right)^{\frac{1}{2}}\\
&\times\left(\int_{{\bf R}^3}du\int_{{\bf R}^3}dv\, |h_1(u)|^2|h_2(v)|^2\langle v'\rangle^{\gamma+2s}\mu^{1/4}(u')w^{2\ell}(v')\right)^{\frac{1}{2}}.
\end{split}
\end{equation}
The first factor on the right-hand side of \eqref{compute nonopp1} is bounded by $\left|w^\ell
h_3\right|_{L^2_{\gamma+2s}}$. For the second factor, in the case when
$|v'|^2\leq\frac{1}{2}(|v|^2+|u|^2)$, since the collisional
conversation laws imply $\mu^{\frac{1}{4}}(u')\leq
\mu^{\frac{1}{8}}(v)\mu^{\frac{1}{8}}(u)$, it follows that $\langle
v'\rangle^{\gamma+2s}\mu^{1/4}(u')w^{2\ell}(v')\lesssim \langle
v\rangle^{-m}\langle u\rangle^{-m}$ for any nonnegative constant $m$. In the case when
$|v'|^2\geq\frac{1}{2}(|v|^2+|u|^2)$ which implies $|v'|^2\sim
|v|^2+|u|^2$, we have for $\ell\geq0$,
$$
w^{2\ell}(v')\lesssim w^{2\ell}(u)+w^{2\ell}(v),
$$
and similarly $\langle v'\rangle^{\gamma+2s}\lesssim\langle v\rangle^{\gamma+2s}$, so that
$$
\langle v'\rangle^{\gamma+2s}\mu^{1/4}(u')w^{2\ell}(v')\lesssim \langle v\rangle^{\gamma+2s}\left(w^{2\ell}(u)+w^{2\ell}(v)\right).
$$
Therefore, in both cases, the second factor on the right-hand side of \eqref{compute nonopp1}  is bounded by
$$
\left|h_1\right|_{L^2}\left|w^{\ell}h_2\right|_{L^2_{\gamma+2s}}
+
\left|w^{\ell}h_1\right|_{L^2}\left|h_2\right|_{L^2_{\gamma+2s}}.
$$
On the remaining region $|v-u|\leq1$, where
$\mu^{1/4}(u')w^{2\ell}(v')\lesssim \mu^{\delta}(u)\mu^{\delta}(v)$
for some $0<\delta<1$, it follows that
\begin{equation*}
\begin{split}
\left|\mathscr{T}_+^{\kappa,\ell}(h_1,h_2,h_3)\right|\lesssim&
2^{2s\kappa}\int_{{\bf R}^3}du\int_{{\bf R}^3}dv\,|v-u|^{\gamma+2s}|h_1(u)h_2(v)||h_3(v')|\\
&\times\left(\mu(u')\mu(v')\mu(u)\mu(v)\right)^{\frac{\delta}{2}},
\end{split}
\end{equation*}
which from Cauchy-Schwarz and Sobolev inequalities, is further bounded by
\begin{equation*}
\begin{split}
&2^{2s\kappa}\left(\int_{{\bf R}^3}dv'\,
\langle v'\rangle^{\gamma+2s}|h_3(v')|^2\mu^{\delta}(v')\right)^{\frac{1}{2}}\\
&\qquad \times\left|h_1(u)\mu^{\frac{\delta}{2}}(u)\right|_{\infty}\left(\int_{{\bf R}^3}du\int_{{\bf R}^3}dv\, \langle v\rangle^{\gamma+2s}|h_2(v)|^2\mu^{\delta}(v)\right)^{\frac{1}{2}}\\
&\lesssim
2^{2s\kappa}
\left|w^{-m}h_1\right|_{H^{2}}\left|w^{\ell}h_2\right|_{L^2_{\gamma+2s}}
\left|w^{\ell}h_3\right|_{L^2_{\gamma+2s}}.
\end{split}
\end{equation*}
 This completes the proof of Proposition \ref{prop.non}.
\end{proof}

\begin{proof}[Proof of Lemma \ref{estimates on nonop}]
In terms of series of estimates obtained in Propositions \ref{prop.non}, by applying the cancellation inequalities constructed in
\cite[Propositions 4.5-4.7]{GR} and carrying out the similar procedure as that of \cite[Lemma 6.1]{GR}, one can prove \eqref{nonop1} and \eqref{nonop2} and the details are omitted for brevity. This completes the proof of  Lemma  \ref{estimates on nonop}.
\end{proof}

\section{Nonlinear estimates}\label{sec.3}

The goal of this section is to make the weighted energy estimates on   those nonlinear terms in \eqref{perturbed eqn}.
The following decomposition will be frequently used in the later proofs:
\begin{equation}\label{nonopsplit}
\begin{split}
\Gamma(f,f)
=&\Gamma({\bf P}f,{\bf P}f)+\Gamma({\bf P}f,\{{\bf I}-{\bf P}\}f)+\Gamma(\{{\bf I}-{\bf P}\}f,{\bf P}f)\\
&
+\Gamma(\{{\bf I}-{\bf P}\}f,\{{\bf I}-{\bf P}\}f).
\end{split}
\end{equation}
Recall \eqref{def.wab} for $w_l(\al,\be)$, and recall \eqref{def e} and \eqref{def d} for $\CE_l(t)$ and $\CD_l(t)$, respectively. To the end we always suppose $-3<\ga<-2s$, $1/2\leq s<1$, and $K\geq 8$.
The first three lemmas of this section concern the estimates on the nonlinear term $\Ga(f,f)$.

\begin{Lemma}\label{lem.non.ga}
Let $l\geq 0$, $|\alpha|+|\beta|\leq K$. It holds that
\begin{equation}\label{nonop3}
\begin{split}
\left|\left(\Gamma_{\pm}(f,f),
f_{\pm}\right)\right|\lesssim \mathcal {E}^{\frac{1}{2}}_{l}(t)\mathcal {D}_{l}(t),
\end{split}
\end{equation}
\begin{equation}\label{nonop5}
\begin{split}
\left|\left(w^2_l(\alpha,\beta)\partial^{\alpha}_{\beta}\Gamma_{\pm}(f,f),
\partial^{\alpha}_{\beta}({\bf I}_{\pm}-{\bf P}_{\pm})f\right)\right|\lesssim\mathcal {E}^{\frac{1}{2}}_{l}(t)\mathcal {D}_{l}(t),
\end{split}
\end{equation}
\begin{equation}\label{nonop6}
\begin{split}
\left|\left(w^2_l(\alpha,0)\partial^{\alpha}\Gamma_{\pm}(f,f),
\partial^{\alpha}f_{\pm}\right)\right|\lesssim\mathcal {E}^{\frac{1}{2}}_{l}(t)\mathcal {D}_{l}(t).
\end{split}
\end{equation}
Here and hereafter, we denote  $I_{\pm}f=f_{\pm}$.
\end{Lemma}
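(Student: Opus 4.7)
The plan is to reduce all three estimates to applications of Lemma \ref{estimates on nonop} combined with the key observation that $(\Gamma(f,f),{\bf P}f)=0$ and a careful distribution of the velocity weight $w_l(\alpha,\beta)$.

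First I would establish the macroscopic orthogonality: by species-wise mass conservation for each slot of $Q$, and summed momentum/energy conservation, one has $\int \sqrt{\mu}\,\Gamma_\pm(f,f)\,dv = 0$ and $\sum_\pm \int \psi \Gamma_\pm(f,f)\,dv = 0$ for $\psi \in \{v_i, |v|^2\}$, so that at each $x$, $(\Gamma(f,f),{\bf P}f)_{L^2_v}=0$. This converts \eqref{nonop3} to an estimate of $(\Gamma(f,f),({\bf I-P})f)$, so the test-function slot already supplies a factor $|({\bf I-P})f|_{N^s_\gamma}$, hence $\sqrt{\CD_l}$. For \eqref{nonop5} the test function is already $({\bf I-P})f$; and for \eqref{nonop6} one writes $\partial^\alpha f_\pm = \partial^\alpha ({\bf I-P})f_\pm + \partial^\alpha {\bf P}f_\pm$, where the macroscopic piece (with $|\alpha|\geq 1$, else dealt with by the orthogonality above) lies in $\CD_l$ thanks to the term $\sum_{1\leq|\alpha|\leq K}\|\partial^\alpha {\bf P}f\|^2$ in \eqref{def d}.

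Next, for the higher-derivative estimates \eqref{nonop5}-\eqref{nonop6}, I apply Lemma \ref{estimates on nonop} with $\ell = l+K-|\alpha|-|\beta|$ so that $w^{2\ell}=w_l^2(\alpha,\beta)$. In each of the two regimes $|\alpha_1|+|\beta_1|\leq K/2$ or $\geq K/2$, I use the crucial monotonicity
\begin{equation*}
w_l(\alpha,\beta) \le w_l(\alpha',\beta')\quad\text{whenever}\quad |\alpha'|+|\beta'|\le|\alpha|+|\beta|,
\end{equation*}
which is immediate from $w\geq 1$, to redistribute the weight freely onto the $g_1$ or $g_2$ factor that carries the larger amount of differentiation. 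Then I decompose $f = {\bf P}f + ({\bf I-P})f$ in both slots of $\Gamma$, so that $\Gamma(f,f)$ splits as in \eqref{nonopsplit} into four pieces; for any factor involving ${\bf P}f$, the exponential $\sqrt{\mu}$ kills all polynomial weights and the anisotropic structure, so weighted $N^s_\gamma$ or $H^2_\gamma$ norms collapse to pointwise bounds on $(a_\pm,b,c)$ and their derivatives. For purely microscopic factors one keeps the full $N^s_\gamma$ norm, which matches $\CD_l$ exactly.

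Finally, I close the $x$-integrals by Sobolev/Gagliardo-Nirenberg in $\R^3$. The essential point is that $\R^3$ admits $\|u\|_{L^\infty_x}\lesssim \|\nabla u\|_{L^2}^{1/2}\|\nabla^2 u\|_{L^2}^{1/2}$, so the factor placed in $L^\infty_x$ costs only spatial derivatives; since $K\geq 8$, these derivatives (microscopic in $N^s_\gamma$, macroscopic of order $\geq 1$) all sit inside $\CD_l$. This is precisely what is needed to bound the seemingly dangerous purely macroscopic contribution $\Gamma({\bf P}f,{\bf P}f)$, which otherwise would yield only $\CE_l\sqrt{\CD_l}$ instead of $\sqrt{\CE_l}\CD_l$. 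Combining these three ingredients—Lemma \ref{estimates on nonop}, the weight monotonicity, and the $\R^3$ Sobolev inequality that converts one factor to $\sqrt{\CD_l}$—one obtains the claimed bound $\CE_l^{1/2}\CD_l$ in all three cases. The main obstacle I anticipate is the careful bookkeeping in case (ii) of Lemma \ref{estimates on nonop}, where the second derivative is placed on the $g_2$ factor and one must verify that the weaker velocity regularity norm $N^s_{\gamma,2}$ can still be controlled by the dissipation after weight redistribution; this forces a case analysis on whether the high-derivative factor is ${\bf P}f$ (trivial) or $({\bf I-P})f$ (where one absorbs the two extra velocity derivatives at the cost of shifting $|\beta|$ by $2 \leq K/2$, which is permitted thanks to the monotonicity).
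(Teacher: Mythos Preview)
Your proposal is correct and follows essentially the same approach as the paper: split $\Gamma(f,f)$ via \eqref{nonopsplit}, apply Lemma \ref{estimates on nonop} with $\ell=l+K-|\alpha|-|\beta|$, use the weight monotonicity $w_l(\alpha,\beta)\leq w_l(\alpha',\beta')$ for $|\alpha'|+|\beta'|\leq|\alpha|+|\beta|$, and close the $x$-integrals with Sobolev inequalities. The paper only spells out the case $\alpha=\beta=0$ of \eqref{nonop5} explicitly (using the $L^3$--$L^6$ pairing for the macro-macro piece and $L^\infty_x$ Sobolev for the micro-micro piece) and declares the remaining estimates similar; your orthogonality remark $(\Gamma(f,f),{\bf P}f)=0$ for \eqref{nonop3} is a useful reduction the paper leaves implicit.
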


\begin{proof}
For brevity, we only prove \eqref{nonop5} in the case when $\al=\be=0$,  and the other two estimates \eqref{nonop3} and \eqref{nonop6} can be proved in the similar way.
For this, we set
$$
J_1=\left|\left(w_l^{2}(0,0)\Gamma_{\pm}(f,f),
({\bf I}_{\pm}-{\bf P}_{\pm})f\right)\right|,
$$
and denote $J_{1,1}, J_{1,2}, J_{1,3}, J_{1,4}$  to be the terms corresponding to the decomposition (\ref{nonopsplit}).
Now we turn to estimate these terms one by one. First, for $J_{1,1}$, recalling (\ref{macro def}) and applying Lemma \ref{estimates on nonop} with $\ell=l+K$,
one has
$$
J_{1,1}\lesssim
\|(a_{\pm},b,c)\|_{H^1}\|\nabla_x(a_{\pm},b,c)\left\|w_l(0,0)({\bf
I}_{\pm}-{\bf P}_{\pm})f\right\|_{N^{s}_{\gamma}},
$$
where we have used Sobolev's inequalities
$$
\|(a_{\pm},b,c)\|_{L^3}\lesssim \|(a_{\pm},b,c)\|_{H^1},\ \
\|(a_{\pm},b,c)\|_{L^6}\lesssim \|\nabla_x(a_{\pm},b,c)\|.
$$
For $J_{1,2}$, by using (\ref{nonop1}) in Lemma 2.2 with $\ell=l+K$, it follows that
$$
J_{1,2}\lesssim \|(a_{\pm},b,c)\|_{L^\infty}\left\|w_l(0,0)({\bf
I}_{\pm}-{\bf P}_{\pm})f\right\|^2_{N^{s}_{\gamma}}\lesssim
\|\nabla_x(a_{\pm},b,c)\|_{H^1}\left\|w_l(0,0)({\bf I}_{\pm}-{\bf
P}_{\pm})f\right\|^2_{N^{s}_{\gamma}}.
$$
In the same way, $J_{1,3}$ has the same bound as $J_{1,2}$. Finally, for $J_{1,4}$, due to Lemma \ref{estimates on nonop} and Sobolev's inequality, we obtain
\begin{equation*}
\begin{split}
J_{1,4}\lesssim& \left\|\left|({\bf I}_{\pm}-{\bf P}_{\pm})f\right|_{L^2}\right\|_{L^\infty_x}
\left\|w_l(0,0)({\bf I}_{\pm}-{\bf P}_{\pm})f\right\|^2_{N^{s}_{\gamma}}\\
&+\left\|w_l(0,0)({\bf I}_{\pm}-{\bf
P}_{\pm})f\right\|\left\|\left|({\bf I}_{\pm}-{\bf
P}_{\pm})f\right|_{N^{s}_{\gamma}}\right\|_{L^\infty_x}
\left\|w_l(0,0)({\bf I}_{\pm}-{\bf P}_{\pm})f\right\|_{N^{s}_{\gamma}}\\
&+{\left\|\left|w^{-m}({\bf I}_{\pm}-{\bf P}_{\pm})f\right|_{H^2}\right\|_{L^{\infty}_{x}}}
\left\|w_l(0,0)({\bf I}_{\pm}-{\bf P}_{\pm})f\right\|^2_{N^{s}_{\gamma}}\\
\lesssim& \sum\limits_{|\alpha|\leq2}
\left\|\partial^\alpha({\bf I}_{\pm}-{\bf P}_{\pm})f\right\|
\left\|w_l(0,0)({\bf I}_{\pm}-{\bf P}_{\pm})f\right\|^2_{N^{s}_{\gamma}}\\
&+\left\|w_l(0,0)({\bf I}_{\pm}-{\bf P}_{\pm})f\right\|\sum\limits_{|\alpha|\leq2}
\left\|\partial^\alpha({\bf I}_{\pm}-{\bf P}_{\pm})f\right\|_{N^{s}_{\gamma}}
\left\|w_l(0,0)({\bf I}_{\pm}-{\bf P}_{\pm})f\right\|_{N^{s}_{\gamma}}\\
&+\sum\limits_{|\alpha_1|\leq2, |\beta_1|\leq2}
\left\|w_l({\alpha_1,\beta_1})\partial^{\alpha_1}_{\beta_1}({\bf
I}_{\pm}-{\bf P}_{\pm})f\right\| \left\|w_l(0,0)({\bf I}_{\pm}-{\bf
P}_{\pm})f\right\|^2_{N^{s}_{\gamma}}.
\end{split}
\end{equation*}
Now the desired estimate \eqref{nonop5} in the case when $\al=\be=0$ holds by combing all the above estimates. This completes the proof of Lemma \ref{lem.non.ga}.
\end{proof}


\begin{Lemma}\label{estimates on nonop2}
Let $\zeta(v)$ be a smooth function satisfying (\ref{expo fun}), and let $|\alpha|\leq K$. Writing
$$
\partial^{\alpha}\Gamma(f,f)=\sum\limits_{\alpha_{1}+\alpha_{2}=\alpha}
\Gamma(\partial^{\alpha_{1}}f,\partial^{\alpha_{2}}f),
$$
one has
\begin{equation}\label{nonop estimates3}
\left\|\int\Gamma(\partial^{\alpha_{1}}f,\partial^{\alpha_{2}}f)\zeta(v)\,
dv\right\|\lesssim\mathcal {E}^{\frac{1}{2}}_{l}(t)\mathcal {D}^{\frac{1}{2}}_{l}(t).
\end{equation}
\end{Lemma}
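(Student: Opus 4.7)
The plan is to combine $L^2_x$-duality with Lemma \ref{estimates on nonop}, treating $\zeta(v)$ as a rapidly decaying test slot in the $v$-variable. For any $h\in L^2(\mathbf{R}^3_x)$ with $\|h\|_{L^2_x}\leq 1$,
$$\int_{\mathbf{R}^3}h(x)\int_{\mathbf{R}^3}\Gamma(\partial^{\alpha_1}f,\partial^{\alpha_2}f)\,\zeta(v)\,dv\,dx = \big(\Gamma(\partial^{\alpha_1}f,\partial^{\alpha_2}f),\,h(x)\zeta(v)\big),$$
so the desired $L^2_x$-bound is equivalent to controlling this trilinear form uniformly in such $h$. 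I would then decompose both $\partial^{\alpha_1}f$ and $\partial^{\alpha_2}f$ into macroscopic and microscopic parts following \eqref{nonopsplit}, producing four pieces to estimate separately.

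For any piece containing at least one microscopic factor, I would apply Lemma \ref{estimates on nonop} with $\alpha=\beta=0$, $\ell=0$ and $g_3=h(x)\zeta(v)$. Since $\zeta$ decays like $e^{-\lambda|v|^2}$, one has $|h(x)\zeta|_{N^{s}_\gamma}\lesssim|h(x)|$ uniformly in $x$, so the triple-product bound collapses to
$$\int_{\mathbf{R}^3}|\partial^{\alpha_1'}g_1|_{L^2_v}\,|\partial^{\alpha_2'}g_2|_{N^{s}_\gamma}\,|h(x)|\,dx + \text{the two symmetric variants of Lemma \ref{estimates on nonop}}.$$
Cauchy--Schwarz in $x$ against $\|h\|_{L^2_x}\leq 1$ then leaves a product of the form $\||\partial^{\alpha_1'}g_1|_{L^2_v}\|_{L^\infty_x}\,\||\partial^{\alpha_2'}g_2|_{N^{s}_\gamma}\|_{L^2_x}$. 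Lifting the first factor through the 3D Sobolev embedding $H^2_x\hookrightarrow L^\infty_x$ controls it by $\mathcal{E}^{1/2}_l$ (the unweighted $L^2_v$ is absorbed since $w_l(\alpha,0)\geq 1$), while the $L^2_x(N^{s}_\gamma)_v$-factor sits in $\mathcal{D}^{1/2}_l$, because it is either microscopic (directly contributing to $\mathcal{D}_l$) or carries at least one $x$-derivative on $(a_\pm,b,c)$. The decision of which of $g_1$, $g_2$ enters $L^\infty_x$ is made by the usual split $|\alpha_1|\leq K/2$ versus $|\alpha_2|\leq K/2$, and the lifting stays comfortably inside the budget since $K\geq 8$.

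For the purely macroscopic piece, $\partial^{\alpha_i}\mathbf{P}f=\partial^{\alpha_i}(a_\pm,b,c)(x)\,P_i(v)\sqrt{\mu}(v)$, so integrating against $\zeta$ reduces the vector-valued $v$-integral to a finite linear combination of scalar products $\partial^{\alpha_1}A(x)\,\partial^{\alpha_2}A'(x)$ with $A,A'\in\{a_\pm,b,c\}$, the coefficients being the velocity moments of $\mathscr{T}(P_i\sqrt\mu,P_j\sqrt\mu)$ against $\zeta$. Hölder with one factor in $L^\infty_x$ and the other in $L^2_x$ closes this case: when $|\alpha|\geq 1$ the factor carrying at least one derivative lies in $\mathcal{D}^{1/2}_l$, while the other lies in $\mathcal{E}^{1/2}_l$ via Sobolev; when $\alpha=0$ I would instead use the 3D inequality $\|A\|_{L^\infty}\lesssim\|\nabla A\|_{H^1}$ (also invoked in Lemma \ref{lem.non.ga}) to put the $L^\infty_x$-factor into $\mathcal{D}^{1/2}_l$, pairing it with $\|A\|_{L^2}\lesssim\mathcal{E}^{1/2}_l$.

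The only real difficulty is to ensure that, whatever the distribution $\alpha_1+\alpha_2=\alpha$, at least one factor supplies a power of dissipation. This would fail naively for $\alpha=0$ with both factors macroscopic, since the zeroth-order macroscopic $L^2_x$-norm does not enter $\mathcal{D}_l$; the 3D Gagliardo--Nirenberg-type bound $\|A\|_{L^\infty}\lesssim\|\nabla A\|_{H^1}$ is precisely what rescues that case and allows the total estimate to close as $\mathcal{E}^{1/2}_l(t)\mathcal{D}^{1/2}_l(t)$.
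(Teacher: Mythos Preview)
Your approach---duality against $h(x)\zeta(v)$, Lemma~\ref{estimates on nonop} with $\ell=0$, macro/micro splitting, and Sobolev/H\"older in $x$---is precisely what the paper intends (its own proof reads only ``with Lemma~2.2 in hand, \eqref{nonop estimates3} can be verified directly by applying Sobolev's embedding inequalities'').

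There is one small oversight. In the mixed piece $\Gamma\big(\partial^{\alpha_1}(\mathbf{I}-\mathbf{P})f,\,\partial^{\alpha_2}\mathbf{P}f\big)$ with $\alpha_2=0$, the $N^s_\gamma$-slot is macroscopic with \emph{no} $x$-derivative, so your assertion that the $L^2_x(N^s_\gamma)$-factor ``is either microscopic or carries at least one $x$-derivative on $(a_\pm,b,c)$'' fails there; and you cannot instead place the microscopic factor in $\mathcal{D}_l^{1/2}$ via the $L^2_v$-norm, since for soft potentials $\|\partial^{\alpha_1}(\mathbf{I}-\mathbf{P})f\|_{L^2_{x,v}}$ is not controlled by $\mathcal{D}_l$ when $|\alpha_1|$ is near $K$. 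The fix is exactly the Gagliardo--Nirenberg bound $\|A\|_{L^\infty}\lesssim\|\nabla_x A\|_{H^1}$ you already invoke for the purely macroscopic piece: put $\mathbf{P}f$ into $L^\infty_x$ (yielding $\|\nabla_x(a_\pm,b,c)\|_{H^1}\lesssim\mathcal{D}_l^{1/2}$) and $\partial^{\alpha_1}(\mathbf{I}-\mathbf{P})f$ into $L^2_x$ (yielding $\mathcal{E}_l^{1/2}$). This is how the paper handles the analogous terms $J_{1,2},J_{1,3}$ in the proof of Lemma~\ref{lem.non.ga}.
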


\begin{proof}
With Lemma 2.2 in hand, (\ref{nonop estimates3}) can be verified directly by applying Sobolev's embedding inequalities, and details are omitted for brevity.
\end{proof}


\begin{Lemma}\label{lem.non.z1}
Let $\ell\geq0$. It holds that
\begin{equation}\label{nonop square}
\sum\limits_{|\alpha|\leq 1}\left|w^{\ell}\partial^{\alpha}\Gamma(f,f)\right|_{L^2}\lesssim \sum\limits_{|\alpha|\leq 1}
\left|w^{\ell}\partial^{\alpha}f\right|_{L^2_{\gamma+2s}}\sum\limits_{|\alpha|\leq 1}\left|w^{\ell}\partial^{\alpha}f\right|_{H^{4}_{\gamma+2s}}.
\end{equation}
Moreover,
\begin{equation}\label{L2+Z1}
\left\|w^{\ell}\Gamma(f,f)\right\|_{H_x^1}+\left\|w^{\ell}\Gamma(f,f)\right\|_{Z_1}
\lesssim \sum\limits_{|\alpha|+|\beta|\leq 5}\left\|w^{\ell-\frac{\gamma+2s}{2\gamma}}\partial^{\alpha}_{\beta}f\right\|^2.
\end{equation}
\end{Lemma}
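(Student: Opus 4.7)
The plan is to first obtain a velocity-only, pointwise-in-$x$ bilinear bound in the form of \eqref{nonop square}, and then to lift it to the full $H^{1}_{x}$ and $Z_{1}$ norms in \eqref{L2+Z1} via H\"older, Minkowski and Sobolev embeddings.

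For \eqref{nonop square}, I would fix $x$, expand $\partial^{\alpha}\Gamma(f,f)=\sum_{\alpha_{1}+\alpha_{2}=\alpha}\Gamma(\partial^{\alpha_{1}}f,\partial^{\alpha_{2}}f)$ by Leibniz with $|\alpha|\leq 1$, and estimate each $|w^{\ell}\Gamma(\partial^{\alpha_{1}}f,\partial^{\alpha_{2}}f)|_{L^{2}_{v}}$ via duality in $v$, namely $|w^{\ell}g|_{L^{2}_{v}}=\sup_{|h|_{L^{2}_{v}}\leq 1}(w^{2\ell}g,h)_{v}$, combined with the dyadic splitting \eqref{two key form}. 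The trilinear pieces $\mathscr{T}_{+}^{\kappa,\ell}$, $\mathscr{T}_{-}^{\kappa,\ell}$ and $\mathscr{T}_{*}^{\kappa,\ell}$ are then controlled by Proposition \ref{prop.non}, in which I would place the $w^{\ell}$ on the two entries $\partial^{\alpha_{1}}f,\partial^{\alpha_{2}}f$ using the splitting $w^{2\ell}(v')\lesssim w^{2\ell}(u)+w^{2\ell}(v)$ from the proof of that proposition, rather than the multiplicative splitting $w^{2\ell}(v')\lesssim w^{2\ell}(u)w^{2\ell}(v)$ discussed above Lemma \ref{estimates on nonop}. The $H^{2}$-type seminorm forced onto one of the first two entries by Proposition \ref{prop.non} combines with the (at most one) outer $x$-derivative $\partial^{\alpha_{i}}$ and fits comfortably inside the $|w^{\ell}\partial^{\alpha}f|_{H^{4}_{\gamma+2s}}$-factor on the right of \eqref{nonop square}. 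Summability in the dyadic index $\kappa$ follows from the cancellation inequalities of \cite{GR}, as in the proof of Lemma \ref{estimates on nonop}.

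For the $H^{1}_{x}$-contribution in \eqref{L2+Z1}, I would square \eqref{nonop square}, insert it pointwise into $\|w^{\ell}\Gamma(f,f)\|_{H^{1}_{x}}^{2}=\sum_{|\alpha|\leq 1}\|w^{\ell}\partial^{\alpha}\Gamma(f,f)\|^{2}$, and apply $\int_{x}A(x)^{2}B(x)^{2}\,dx\leq\|A\|_{L^{\infty}_{x}}^{2}\|B\|_{L^{2}_{x}}^{2}$ with $A(x)=\sum_{|\alpha'|\leq 1}|w^{\ell}\partial^{\alpha'}f|_{L^{2}_{\gamma+2s}}$ and $B(x)=\sum_{|\alpha''|\leq 1}|w^{\ell}\partial^{\alpha''}f|_{H^{4}_{\gamma+2s}}$. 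The Sobolev embedding $H^{2}_{x}\hookrightarrow L^{\infty}_{x}$ on $\R^{3}$ converts $\|A\|_{L^{\infty}_{x}}^{2}$ into an $L^{2}_{x}$-norm involving up to three $x$-derivatives, while $\|B\|_{L^{2}_{x}}^{2}$ involves up to one $x$-derivative and up to four $v$-derivatives; both contributions remain within the budget $|\alpha|+|\beta|\leq 5$. The weighted $L^{2}_{\gamma+2s}$-norm in velocity is finally converted to plain $L^{2}$ via the algebraic identity $\langle v\rangle^{\gamma+2s}w^{2\ell}=w^{2\ell-(\gamma+2s)/\gamma}$, which is exactly the square of the weight $w^{\ell-(\gamma+2s)/(2\gamma)}$ appearing on the right-hand side of \eqref{L2+Z1}.

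For the $Z_{1}$-contribution, I would invoke the Minkowski inequality $\|g\|_{L^{2}_{v}L^{1}_{x}}\leq\|g\|_{L^{1}_{x}L^{2}_{v}}$, valid since $2\geq 1$, to reduce the task to bounding $\int_{\R^{3}}|w^{\ell}\Gamma(f,f)|_{L^{2}_{v}}\,dx$. Applying \eqref{nonop square} with $\alpha=0$ pointwise and Cauchy-Schwarz in $x$, the resulting product of two $L^{2}_{x}$ factors is bounded after the same weight conversion and one application of AM-GM by the sum of squared norms on the right-hand side of \eqref{L2+Z1}. The main technical obstacle I anticipate lies in the first step: one must implement the $v$-duality through Proposition \ref{prop.non} in such a way that the test function carries only the unweighted $|\cdot|_{L^{2}_{v}}$-norm, which constrains the admissible choice of bound among the three alternatives in Proposition \ref{prop.non}(iii) and forces systematic use of the additive weight splitting $w^{2\ell}(v')\lesssim w^{2\ell}(u)+w^{2\ell}(v)$ emphasized by the authors; once this is done, the remaining space-variable argument is a routine chain of H\"older, Minkowski and Sobolev inequalities whose only subtlety is the bookkeeping $\langle v\rangle^{\gamma+2s}w^{2\ell}=w^{2\ell-(\gamma+2s)/\gamma}$.
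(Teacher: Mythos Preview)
Your treatment of \eqref{L2+Z1} matches the paper's exactly: apply \eqref{nonop square} pointwise in $x$, pull out one factor in $L^\infty_x$ and control it by $H^2_x\hookrightarrow L^\infty_x$, integrate the other in $L^2_x$, and finish with the weight identity $\langle v\rangle^{\gamma+2s}w^{2\ell}=w^{2\ell-(\gamma+2s)/\gamma}$. For the $Z_1$ piece the paper just says ``in the completely same way'' and cites \cite{Strain11}; your Minkowski-plus-Cauchy--Schwarz argument is precisely what is meant.

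The one genuine difference is in \eqref{nonop square} itself. The paper does not prove it here at all; it simply quotes \cite[Proposition~3.1]{Strain11}. Your plan to recover it from scratch via $v$-duality and Proposition~\ref{prop.non} is a different route. It is plausible in outline---writing $h_3=w^{-\ell}h$ indeed gives $|w^\ell h_3|_{L^2_{\gamma+2s}}=|h|_{L^2_{\gamma+2s}}\leq|h|_{L^2}$ since $\gamma+2s<0$, so each single-$\kappa$ bound in Proposition~\ref{prop.non} does put only an $L^2$-norm on the test function---but the delicate step is the one you flag: summing in $\kappa$. The cancellation inequalities of \cite{GR} (their Propositions~4.5--4.7) are tailored to output $N^s_\gamma$-norms on the third entry, not a bare $L^2$-norm, and it is not automatic that the gain from cancellation survives when $h_3$ is merely $L^2$. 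Strain's direct argument in \cite{Strain11} sidesteps this by spending four $v$-derivatives on one input (hence the $H^4_{\gamma+2s}$ on the right), which is more than enough regularity to absorb the angular singularity without appealing to the GR cancellation machinery. So your approach buys self-containment at the cost of re-engineering the $\kappa$-summation; the paper's approach buys brevity by outsourcing the estimate.
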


\begin{proof}
First of all, \eqref{nonop square} has been proved in
\cite[{pp.21, Proposition 3.1}]{Strain11}. {Then applying \eqref{nonop square}} and Sobolev's
inequality, we have
\begin{equation*}
\begin{split}
\left\|w^{\ell}\Gamma(f,f)\right\|^2_{H_x^1}\lesssim& \int_{{\bf R}^3} \sum\limits_{|\alpha|\leq 1} \left|w^{\ell}\partial^{\alpha}f\right|^2_{L^2_{\gamma+2s}}\sum\limits_{|\alpha|\leq 1}\left|w^{\ell}\partial^{\alpha}f\right|^2_{H^{4}_{\gamma+2s}}dx\\
\lesssim&\sup\limits_{x}\sum\limits_{|\alpha|\leq 1} \left|w^{\ell}\partial^{\alpha}f\right|^2_{L^2_{\gamma+2s}}\int_{{\bf R}^3} \sum\limits_{|\alpha|\leq 1}\left|w^{\ell}\partial^{\alpha}f\right|^2_{H^{4}_{\gamma+2s}}dx
\\
\lesssim&\sum\limits_{|\alpha'|\leq 3} \left\|w^{\ell}\partial^{\alpha'}f\right\|^2_{L^2_{\gamma+2s}} \sum\limits_{|\alpha|\leq 1}\left\|w^{\ell}\partial^{\alpha}f\right\|^2_{H^{4}_{\gamma+2s}}.
\end{split}
\end{equation*}
Therefore $\left\|w^{\ell}\Gamma(f,f)\right\|_{H_x^1}$ is bounded by the right-hand term of (\ref{L2+Z1}).
As in \cite{Strain11}, $\left\|w^{\ell}\Gamma(f,f)\right\|_{Z_1}$ can be estimated in the completely same way as for $\left\|w^{\ell}\Gamma(f,f)\right\|_{H_x^1}$, and details are omitted for brevity. This completes the proof of Lemma \ref{lem.non.z1}.
\end{proof}

The following two lemmas concern the estimates on  $v\cdot\nabla_x\phi  f_{\pm}$. As in \cite{Guo5}, for simplicity, we use $e_i$ to denote the multi-index with the $i$th element unit and the rest ones zeros.

\begin{Lemma}\label{lem.vphi.x}
Let $1\leq|\alpha|\leq K$, and $l\geq0$. Suppose $\sqrt{\mathcal {E}_{l}(t)}< \de$
for a constant $\de>0$. Then, it holds that
\begin{equation}\label{vdx}
\sum\limits_{1\leq|\alpha_1|\leq|\alpha|}\left(v_i\partial^{\alpha_1+e_i}
\phi\partial^{\alpha-\alpha_1}f_{\pm},e^{\pm\phi}w_l^2(\alpha,0)\partial^{\alpha}f_{\pm}\right)\lesssim \mathcal {E}^{\frac{1}{2}}_{l}(t)\mathcal {D}_{l}(t).
\end{equation}
\end{Lemma}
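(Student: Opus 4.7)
The plan is to follow the strategy sketched in the introduction. First, under the smallness $\sqrt{\CE_l(t)}<\delta$, Sobolev gives $\|\phi\|_{L^\infty_x}\lesssim \|E\|_{H^1_x}\lesssim \sqrt{\CE_l(t)}$, so $e^{\pm\phi}$ is uniformly bounded and can be absorbed into the constants. The crucial ingredient is the weight inequality
\[
|v_i|\,w_l^2(\alpha,0)\le \langle v\rangle^{\gamma+2s}\,w_l(\alpha-\alpha_1,0)\,w_l(\alpha,0),
\]
which follows from the identity $w_l(\alpha,0)=\langle v\rangle^{\gamma|\alpha_1|}w_l(\alpha-\alpha_1,0)\le \langle v\rangle^{\gamma}w_l(\alpha-\alpha_1,0)$ (since $|\alpha_1|\ge 1$ and $\gamma<0$), combined with $|v_i|\le\langle v\rangle$ and the standing assumption $s\ge 1/2$ that yields $\gamma+1\le\gamma+2s$.

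Next I split $\langle v\rangle^{\gamma+2s}=\langle v\rangle^{(\gamma+2s)/2}\cdot\langle v\rangle^{(\gamma+2s)/2}$ symmetrically between the two $f$-factors and apply Cauchy--Schwarz in $v$, reducing the left-hand side of \eqref{vdx} to
\[
\sum_{1\le|\alpha_1|\le|\alpha|}\int_{{\bf R}^3}\bigl|\partial^{\alpha_1+e_i}\phi(x)\bigr|\,A(x)\,B(x)\,dx,
\]
with $A(x)=\bigl|\langle v\rangle^{(\gamma+2s)/2}w_l(\alpha-\alpha_1,0)\partial^{\alpha-\alpha_1}f\bigr|_{L^2_v}$ and $B(x)$ defined analogously using $\partial^\alpha f$ and $w_l(\alpha,0)$. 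Since the $N^s_\gamma$ norm dominates the $L^2_{\gamma+2s}$ weighted norm, the macro/micro decomposition $f={\bf P}f+({\bf I}-{\bf P})f$ together with the Maxwellian decay of ${\bf P}f$ yields $\|B\|_{L^2_x}\lesssim \sqrt{\CD_l(t)}$ for $|\alpha|\ge 1$, and similarly $\|A\|_{L^2_x}\lesssim \sqrt{\CD_l(t)}$ whenever $|\alpha-\alpha_1|\ge 1$, while $\|A\|_{L^2_x}\lesssim \sqrt{\CE_l(t)}+\sqrt{\CD_l(t)}$ when $|\alpha-\alpha_1|=0$ (the $\sqrt{\CE_l}$ coming from the zero-derivative ${\bf P}f$ piece).

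I then close the $x$-integral by a case split in $|\alpha_1|$. When $1\le|\alpha_1|\le K-3$, the identity $\partial^{\alpha_1+e_i}\phi=-\partial^{\alpha_1}E_i$ together with Sobolev $H^2_x\hookrightarrow L^\infty_x$ gives $\|\partial^{\alpha_1+e_i}\phi\|_{L^\infty_x}\lesssim \sqrt{\CD_l(t)}$, since $|\alpha_1|+2\le K-1$ places the needed derivatives of $E$ into $\CD_l$; Hölder with exponents $(\infty,2,2)$ then finishes. When $K-2\le|\alpha_1|\le|\alpha|$, so that $|\alpha-\alpha_1|\le 2$, I instead put $A$ in $L^\infty_xL^2_v$ via Sobolev: the macroscopic contribution $\partial^{\alpha-\alpha_1}{\bf P}f$ has Maxwellian decay absorbing all velocity weights and produces $H^2_x$-derivatives of $(a_\pm,b,c)$ controlled by $\sqrt{\CE_l(t)}+\sqrt{\CD_l(t)}$; for the microscopic piece, the decay $\langle v\rangle^{(\gamma+2s)/2}$ is used in tandem with $w_l(\alpha-\alpha_1,0)=\langle v\rangle^{-\gamma|\alpha''|}w_l(\alpha''+\alpha-\alpha_1,0)$ to match the $\CD_l$-weights at derivative level $|\alpha''+\alpha-\alpha_1|\le 4$. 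The remaining factor $\|\partial^{\alpha_1+e_i}\phi\|_{L^2_x}$ gives the desired $\sqrt{\CE_l(t)}$ (or $\sqrt{\CD_l(t)}$ for $|\alpha_1|\le K-1$), and Hölder with $(2,\infty,2)$ concludes.

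The main obstacle is the weighted Sobolev embedding on the microscopic contribution in the second case, where the heavy weight $w_l(\alpha-\alpha_1,0)$ at low derivative level exceeds the natural weight of $\CD_l$ at higher derivative level by a factor $\langle v\rangle^{-\gamma|\alpha''|}$ only partially compensated by $\langle v\rangle^{(\gamma+2s)/2}$. This forces a delicate interplay between the macro/micro decomposition and the smallness $\sqrt{\CE_l}<\delta$, with the assumption $K\ge 8$ leaving enough room for the Sobolev embeddings and for the absorption of any residual $\CD_l^{3/2}$-type terms into the stated form $\CE_l^{1/2}\CD_l$.
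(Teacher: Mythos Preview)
Your overall strategy is the same as the paper's, and your weight inequality is correct. However there are two genuine gaps.

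\textbf{The main gap: weight distribution.} You place the full weight $w_l(\alpha-\alpha_1,0)$ on the low–derivative factor $\partial^{\alpha-\alpha_1}f$. This is too heavy. In your Case~2, with $|\alpha_1|\ge K-2$, you put $A(x)=\bigl|\langle v\rangle^{(\gamma+2s)/2}w_l(\alpha-\alpha_1,0)\partial^{\alpha-\alpha_1}({\bf I}-{\bf P})f\bigr|_{L^2_v}$ into $L^\infty_x$ via Sobolev, producing terms with $x$–derivative level $|\tilde\alpha|=|\alpha-\alpha_1|+|\alpha''|$ for $|\alpha''|\le 2$. But the weight remains $w_l(\alpha-\alpha_1,0)=\langle v\rangle^{-\gamma|\alpha''|}w_l(\tilde\alpha,0)$, so after extracting the $\CD_l$--norm at level $\tilde\alpha$ you are left with an uncompensated growth $\langle v\rangle^{-\gamma|\alpha''|}$. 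Your spare decay is only $\langle v\rangle^{(\gamma+2s)/2}$, and for $|\alpha''|\ge 1$ one has $(\gamma+2s)/2-\gamma|\alpha''|\ge(-\gamma+2s)/2>0$, so the term is controlled neither by $\CD_l$ nor by $\CE_l$. Your paragraph labeled ``the main obstacle'' identifies this mismatch but does not close it; the ``delicate interplay'' you invoke does not exist.

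The fix, which is exactly what the paper does, is to peel off only \emph{one} power of $w$ in the weight inequality:
\[
|v_i|\,w_l(\alpha,0)\le \langle v\rangle^{1+\gamma}\,w_l(\alpha-e_i,0)\le \langle v\rangle^{\gamma+2s}\,w_l(\alpha-e_i,0),
\]
so that the low–derivative factor carries the much lighter weight $w_l(\alpha-e_i,0)$, independent of $\alpha_1$. In the high–$|\alpha_1|$ case (the paper uses the threshold $[K/2]+1$), one then needs only $|\alpha-\alpha_1|+2\le |\alpha|-1$, i.e.\ $|\alpha_1|\ge 3$, which is automatic. This is precisely the point emphasized in the introduction around the discussion of $v\cdot\nabla_x\phi f$.

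\textbf{A secondary issue.} Your claim that residual $\CD_l^{3/2}$ terms can be ``absorbed into the stated form $\CE_l^{1/2}\CD_l$'' is wrong: $\CD_l$ contains $N^s_\gamma$--norms which are \emph{not} bounded by $\CE_l$, so $\CD_l^{1/2}\not\lesssim\CE_l^{1/2}$. This is, however, only a bookkeeping problem. One should split $A$ into its macroscopic and microscopic parts \emph{before} applying H\"older, and in each piece place the $\sqrt{\CE_l}$ factor on whichever term (the $\phi$--derivative or the macroscopic $f$--factor) lands in $\CE_l\setminus\CD_l$. The paper does this explicitly via the three–term decomposition $J_{2,1},J_{2,2},J_{2,3}$.
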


\begin{proof}
In terms of $f_{\pm}=({\bf I}_{\pm}-{\bf P}_{\pm})f+{\bf P}_{\pm}f$, set
\begin{equation*}
\begin{split}
\sum\limits_{1\leq|\alpha_1|\leq|\alpha|}&\left(v_i\partial^{\alpha_1+e_i}
\phi\partial^{\alpha-\alpha_1}{\bf P}_{\pm}f,e^{\pm\phi}w_l^2(\alpha,0)\partial^{\alpha}f_{\pm}\right)\\
&+\sum\limits_{1\leq|\alpha_1|\leq|\alpha|}\left(v_i\partial^{\alpha_1+e_i}
\phi\partial^{\alpha-\alpha_1}({\bf I}_{\pm}-{\bf P}_{\pm})f,e^{\pm\phi}w_l^2(\alpha,0)\partial^{\alpha}{\bf P}_{\pm}f\right)\\
&+
\sum\limits_{1\leq|\alpha_1|\leq|\alpha|}\left(v_i\partial^{\alpha_1+e_i}
\phi\partial^{\alpha-\alpha_1}({\bf I}_{\pm}-{\bf P}_{\pm})f,e^{\pm\phi}w_l^2(\alpha,0)\partial^{\alpha}({\bf I}_{\pm}-{\bf P}_{\pm})f\right)\\ =&J_{2,1}+J_{2,2}+J_{2,3}.
\end{split}
\end{equation*}
Notice by Sobolev inequality that
\begin{equation}\label{force bdd}
|\phi|_{L^\infty}\lesssim \|\nabla_x\phi\|_{H^1}\lesssim \sqrt{\mathcal {E}_{l}(t)}< \de.
\end{equation}
For $J_{2,1}$,  one has
$$
|J_{2,1}|\lesssim \sum\limits_{1\leq|\alpha_1|\leq|\alpha|}\int_{{\bf R}^3}|\partial^{\alpha_1+e_i}
\phi||\partial^{\alpha-\alpha_1}(a_{\pm},b,c)|\partial^\alpha f_{\pm}|_{L^2_{\gamma}}dx,
$$
where for $|\alpha_1|\leq \left[K/2\right]$, the integrand is
bounded by
$$
\sup\limits_{x}|\partial^{\alpha_1+e_i}
\phi|\cdot \|\partial^{\alpha-\alpha_1}(a_{\pm},b,c)\|\cdot \|\partial^\alpha f_{\pm}\|_{L^2_{\gamma}},
$$
while for $|\alpha_1|\geq \left[K/2\right]+1$, it is bounded by
$$
\sup\limits_{x}|\partial^{\alpha-\alpha_1}(a_{\pm},b,c)|\cdot \|\partial^{\alpha_1+e_i}
\phi\|\cdot \|\partial^\alpha f_{\pm}\|_{L^2_{\gamma}}.
$$
Thus, by Sobolev inequality,
$$
|J_{2,1}|\lesssim  \|\nabla_x^2\phi\|_{H^{K-1}}\sum\limits_{1\leq|\alpha|\leq K}\left\{\|\partial^\alpha (a_{\pm},b,c)\|\|\partial^\alpha f_{\pm}\|_{L^2_{\gamma}}\right\}.
$$
Similarly, for $J_{2,2}$, one has
$$
\begin{array}{rlll}
\begin{split}
|J_{2,2}|\lesssim& \sum\limits_{1\leq|\alpha_1|\leq|\alpha|}\int_{{\bf R}^3}|\partial^{\alpha_1+e_i}
\phi|\left|\partial^{\alpha-\alpha_1}({\bf I}_{\pm}-{\bf P}_{\pm})f\right|_{L^2_{\gamma}}|\partial^\alpha (a_{\pm},b,c)|dx\\
\lesssim& \sum\limits_{|\alpha_1|\leq \left[K/2\right]}\sup\limits_{x}|\partial^{\alpha_1+e_i}
\phi|\cdot \|\partial^{\alpha-\alpha_1}({\bf I}_{\pm}-{\bf P}_{\pm})f\|_{L^2_{\gamma}}
\cdot \|\partial^\alpha (a_{\pm},b,c)\|\\
&+\sum\limits_{|\alpha_1|\geq \left[K/2\right]+1}\sup\limits_{x}|\partial^{\alpha-\alpha_1}({\bf I}_{\pm}-{\bf P}_{\pm})f|_{L^2_{\gamma}}\cdot \|\partial^{\alpha_1+e_i}
\phi\|
\cdot \|\partial^\alpha (a_{\pm},b,c)\|\\
\lesssim&
\|\nabla_x^2\phi\|_{H^{K-1}}\sum\limits_{1\leq|\alpha|\leq K}\|\partial^\alpha (a_{\pm},b,c)\|
\sum\limits_{|\alpha|\leq K}\|\partial^\alpha ({\bf I}_{\pm}-{\bf P}_{\pm})f\|_{L^2_{\gamma}}.
\end{split}
\end{array}
$$
Finally, for $J_{2,3}$, since $1/2\leq s <1$, it is straightforward to see
$$|w_l(\alpha,0)v_i|\lesssim w_l(\alpha-e_i,0)\langle v\rangle^{\gamma+2s}.$$
Therefore, one has
$$
\begin{array}{rlll}
\begin{split}
|J_{2,3}|\lesssim& \sum\limits_{1\leq|\alpha_1|\leq|\alpha|}\int_{{\bf R}^3}|\partial^{\alpha_1+e_i}
\phi|\cdot \left|w_l(\alpha-e_i,0)\partial^{\alpha-\alpha_1}({\bf I}_{\pm}-{\bf P}_{\pm})f\right|_{L^2_{\gamma+2s}}\\ &\times\left|w_l(\alpha,0)\partial^{\alpha}({\bf I}_{\pm}-{\bf P}_{\pm})f\right|_{L^2_{\gamma+2s}}dx\\
\lesssim& \sum\limits_{1\leq |\alpha_1|\leq \left[K/2\right]}\sup\limits_{x}|\partial^{\alpha_1+e_i}
\phi|\cdot \left\|w_l(\alpha-e_i,0)\partial^{\alpha-\alpha_1}({\bf I}_{\pm}-{\bf P}_{\pm})f\right\|_{L^2_{\gamma+2s}}\\
&\qquad\qquad\qquad \times\left\|w_l(\alpha,0)\partial^{\alpha}({\bf I}_{\pm}-{\bf P}_{\pm})f\right\|_{L^2_{\gamma+2s}}\\
&+\sum\limits_{|\al|\geq |\alpha_1|\geq \left[K/2\right]+1}\sup\limits_{x}\left|w_l(\alpha-e_i,0)\partial^{\alpha-\alpha_1}({\bf I}_{\pm}-{\bf P}_{\pm})f\right|_{L^2_{\gamma+2s}}\cdot \|\partial^{\alpha_1+e_i}
\phi\|
\\ &
\qquad\qquad\qquad \times\left\|w_l(\alpha,0)\partial^{\alpha}({\bf I}_{\pm}-{\bf P}_{\pm})f\right\|_{L^2_{\gamma+2s}}\\
\lesssim&
\|\nabla_x^2\phi\|_{H^{K-1}}
\sum\limits_{|\alpha|\leq K}\left\|w_l(\alpha,0)\partial^\alpha ({\bf I}_{\pm}-{\bf P}_{\pm})f_{\pm}\right\|^2_{L^2_{\gamma+2s}},
\end{split}
\end{array}
$$
where we have used $|\al-\al_1|\leq |\al-e_i|$ in the case when $1\leq |\alpha_1|\leq \left[K/2\right]$, and $|\alpha-\alpha_1|+2\leq |\alpha-e_i|$ in the case when  $|\al|\geq |\alpha_1|\geq \left[K/2\right]+1$. Collecting all the estimates, \eqref{vdx} follows.  This then completes the proof of Lemma \ref{lem.vphi.x}.
\end{proof}

\begin{Lemma}\label{lem.vphi.xv}
Let $1\leq|\alpha|+|\beta|\leq K$ with $|\beta|\geq1$, and $l\geq0$. Suppose $\sqrt{\mathcal {E}_{l}(t)}<\de$
for a constant $\de>0$. Then, it holds that
\begin{equation}\label{vdxv}
\sum\limits_{|\alpha_1|+|\beta_1|\geq1\atop{|\alpha_1|\leq|\alpha|,|\beta_1|\leq1}}
\left(\partial_{\beta_1}v_i\partial^{\alpha_1+e_i}
\phi\partial_{\beta-\beta_1}^{\alpha-\alpha_1}({\bf I}_{\pm}-{\bf P}_{\pm})f,e^{\pm\phi}w_l^2(\alpha,\beta)\partial_\beta^{\alpha}({\bf I}_{\pm}-{\bf P}_{\pm})f\right)\lesssim \mathcal {E}^{\frac{1}{2}}_{l}(t)\mathcal {D}_{l}(t).
\end{equation}
\end{Lemma}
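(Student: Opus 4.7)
The plan is to adapt the strategy of Lemma \ref{lem.vphi.x}, splitting the sum in \eqref{vdxv} into the two sub-cases $\beta_1=0$ (for which $|\alpha_1|\geq 1$) and $|\beta_1|=1$. In both sub-cases I will redistribute the velocity weight $w_l^2(\alpha,\beta)$ so that each of the two microscopic factors $\partial^{\cdots}_{\cdots}(\mathbf{I}_\pm-\mathbf{P}_\pm)f$ carries an additional $\langle v\rangle^{(\gamma+2s)/2}$-weight, thereby becoming absorbable into $\sqrt{\mathcal{D}_l(t)}$. The electric-field derivative $\partial^{\alpha_1+e_i}\phi=-\partial^{\alpha_1}E$ satisfies $|\alpha_1|\leq|\alpha|\leq K-1$ (since $|\beta|\geq 1$), so it can always be placed either in $L^2_x$ (controlled by $\sqrt{\mathcal{D}_l(t)}$) or in $L^\infty_x$ via Sobolev embedding (controlled by $\sqrt{\mathcal{E}_l(t)}$), and the extra factor $e^{\pm\phi}$ is uniformly bounded in view of \eqref{force bdd}.

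For $\beta_1=0$, the inner product to be bounded reads
\[
\bigl(v_i\,\partial^{\alpha_1+e_i}\phi\,\partial^{\alpha-\alpha_1}_\beta(\mathbf{I}_\pm-\mathbf{P}_\pm)f,\; e^{\pm\phi}w_l^2(\alpha,\beta)\partial^{\alpha}_\beta(\mathbf{I}_\pm-\mathbf{P}_\pm)f\bigr),\quad 1\leq|\alpha_1|\leq|\alpha|.
\]
I will invoke the pointwise inequality $w_l(\alpha,\beta)|v_i|\lesssim w_l(\alpha-e_i,\beta)\langle v\rangle^{\gamma+2s}$, which holds precisely because $s\geq 1/2$. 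Since $|\alpha_1|\geq 1$ and $-\gamma>0$, the weight $w_l(\alpha-e_i,\beta)$ is dominated by $w_l(\alpha-\alpha_1,\beta)$, the weight naturally associated with $\partial^{\alpha-\alpha_1}_\beta$. Cauchy--Schwarz in $v$ then produces two $L^2_{\gamma+2s}$-factors, and a standard dichotomy on $|\alpha_1|\leq [K/2]$ versus $|\alpha_1|\geq [K/2]+1$ (placing either $\partial^{\alpha_1+e_i}\phi$ or $\partial^{\alpha-\alpha_1}_\beta(\mathbf{I}_\pm-\mathbf{P}_\pm)f$ in $L^\infty_x$ via Sobolev embedding, where $K\geq 8$ gives ample room) closes the sub-case, exactly mirroring the treatment of $J_{2,3}$ in the proof of Lemma \ref{lem.vphi.x}.

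For $|\beta_1|=1$, the factor $\partial_{\beta_1}v_i$ vanishes unless $\beta_1=e_i$, in which case it equals $1$. The integral to be controlled is
\[
\bigl(\partial^{\alpha_1+e_i}\phi\,\partial^{\alpha-\alpha_1}_{\beta-e_i}(\mathbf{I}_\pm-\mathbf{P}_\pm)f,\; e^{\pm\phi}w_l^2(\alpha,\beta)\partial^{\alpha}_\beta(\mathbf{I}_\pm-\mathbf{P}_\pm)f\bigr),\quad 0\leq|\alpha_1|\leq|\alpha|.
\]
This is the main obstacle: without a $v_i$ factor, the weight $\langle v\rangle^{\gamma+2s}$ demanded by the dissipation norm cannot be extracted from a velocity moment. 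The remedy is to split the squared weight directly as
\[
w_l^2(\alpha,\beta)\leq w_l(\alpha-\alpha_1,\beta-e_i)\langle v\rangle^{(\gamma+2s)/2}\cdot w_l(\alpha,\beta)\langle v\rangle^{(\gamma+2s)/2},
\]
which reduces to the exponent inequality $\gamma|\alpha_1|-2s\leq 0$ and is therefore valid for every $|\alpha_1|\geq 0$. After Cauchy--Schwarz in $v$ the two microscopic pieces are paired with $\langle v\rangle^{(\gamma+2s)/2}$, hence both bounded in $L^2_xL^2_{\gamma+2s}$ by $\sqrt{\mathcal{D}_l(t)}$, and the same high-low split on $|\alpha_1|$ used in the first sub-case yields the desired factor of $\sqrt{\mathcal{E}_l(t)}$ from $\partial^{\alpha_1+e_i}\phi$. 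Combining the two sub-cases proves \eqref{vdxv}.
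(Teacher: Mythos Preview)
Your proposal is correct and follows the same approach as the paper: pointwise redistribution of the velocity weight to produce two $L^2_{\gamma+2s}$ factors, followed by the high--low dichotomy on $|\alpha_1|$ to place one of the three factors in $L^\infty_x$. The paper's own proof is terser---it treats $\beta_1=0$ and $|\beta_1|=1$ simultaneously via the single inequality $|\partial_{\beta_1}v_i|\,w_l^2(\alpha,\beta)\lesssim w_l(\alpha-\alpha_1,\beta-\beta_1)\langle v\rangle^{(\gamma+2s)/2}\cdot w_l(\alpha,\beta)\langle v\rangle^{(\gamma+2s)/2}$---but your explicit case-split and exponent check $\gamma|\alpha_1|-2s\leq 0$ make the mechanism more transparent.

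One small caution on your $\beta_1=0$ sub-case: you pass from $w_l(\alpha-e_i,\beta)$ to the larger weight $w_l(\alpha-\alpha_1,\beta)$ \emph{before} splitting on $|\alpha_1|$. That is harmless when $|\alpha_1|\leq [K/2]$ (the weight then matches the derivative $\partial^{\alpha-\alpha_1}_\beta$ exactly), but in the high-$|\alpha_1|$ regime the Sobolev embedding adds up to two extra $x$-derivatives, and the enlarged weight $w_l(\alpha-\alpha_1,\beta)$ no longer fits those terms. The fix---which your phrase ``exactly mirroring the treatment of $J_{2,3}$'' already points to---is to retain the smaller weight $w_l(\alpha-e_i,\beta)$ in the high case, so that $|\alpha-\alpha_1|+2\leq |\alpha|-1$ (valid since $|\alpha_1|\geq [K/2]+1\geq 3$) absorbs the Sobolev loss. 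The same remark applies verbatim to the $|\beta_1|=1$ sub-case with $w_l(\alpha,\beta-e_i)$ playing the role of the retained weight.
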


\begin{proof}
For brevity, we denote the left-hand term of (\ref{vdxv}) as $J_3$. As in Lemma \ref{lem.vphi.x}, we
prove (\ref{vdxv}) by considering the following two cases. For the case $|\alpha_1|\leq\left[K/2\right]$, from (\ref{force bdd}),
one has
$$
\begin{array}{rlll}
\begin{split}
|J_3|\lesssim& \sup\limits_{x}|\partial^{\alpha_1+e_i}\phi|\cdot \left
\|w_l(\alpha-\alpha_1,\beta-\beta_1)\partial_{\beta-\beta_1}^{\alpha-\alpha_1}({\bf I}_{\pm}-{\bf P}_{\pm})f\right\|_{L^2_{\gamma+2s}}\\ &\times\left\|w_l(\alpha,\beta)\partial_{\beta}^{\alpha}({\bf I}_{\pm}-{\bf P}_{\pm})f\right\|_{L^2_{\gamma+2s}},
\end{split}
\end{array}
$$
which is further bounded by the right-hand term of (\ref{vdxv}). For the case $|\alpha_1|\geq\left[K/2\right]+1$,
it is similar to verify
$$
\begin{array}{rlll}
\begin{split}
|J_3|\lesssim& \sup\limits_{x}\left|w_l(\alpha-\alpha_1,\beta-\beta_1)\partial_{\beta-\beta_1}^{\alpha-\alpha_1}({\bf I}_{\pm}-{\bf P}_{\pm})f\right|_{L^2_{\gamma+2s}}\|\partial^{\alpha_1+e_i}\phi\|
\\ &\times\left\|w_l(\alpha,\beta)\partial_{\beta}^{\alpha}({\bf I}_{\pm}-{\bf P}_{\pm})f\right\|_{L^2_{\gamma+2s}},
\end{split}
\end{array}
$$
which is also bounded by the right-hand term of (\ref{vdxv}). This completes the proof of Lemma \ref{lem.vphi.xv}.
\end{proof}

The following two lemmas concern the estimates on  $\nabla_x \phi\cdot\nabla_v f_{\pm}$.

\begin{Lemma}\label{lem.phi.x}
Let $1\leq|\alpha|\leq K$, and $l\geq0$. Suppose  $\sqrt{\mathcal
{E}_{l}(t)}< \de$ for a constant $\de>0$. Then, it holds that
\begin{equation}\label{vdx2}
\sum\limits_{|\alpha_1|\leq|\alpha|}\left(\partial^{\alpha_1+e_i}
\phi\partial_{e_i}^{\alpha-\alpha_1}f_{\pm},e^{\pm\phi}w_l^2(\alpha,0)\partial^{\alpha}f_{\pm}\right)\lesssim \mathcal {E}^{\frac{1}{2}}_{l}(t)\mathcal {D}_{l}(t).
\end{equation}
\end{Lemma}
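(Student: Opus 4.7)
The plan is to treat the sum in two regimes, $\alpha_1 = 0$ and $1 \leq |\alpha_1| \leq |\alpha|$, in parallel to the proof of Lemma \ref{lem.vphi.x}, with the extra complication of a $v$-derivative landing on $f$. The case $1 \leq |\alpha_1| \leq |\alpha|$ is exactly the one sketched in the introduction, while $\alpha_1=0$ requires an extra integration by parts in $v_i$ since $\partial_{e_i}^{\alpha}f_\pm$ may carry up to $K+1$ derivatives.

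For $1\leq |\alpha_1|\leq|\alpha|$, I would decompose $f_\pm = \mathbf{P}_\pm f + (\mathbf{I}_\pm-\mathbf{P}_\pm)f$ in both the middle factor $\partial_{e_i}^{\alpha-\alpha_1}f_\pm$ and the right-most factor $\partial^\alpha f_\pm$, yielding four sub-terms. As in Lemma \ref{lem.vphi.x}, for each sub-term I would distinguish $|\alpha_1|\leq [K/2]$ (place $\partial^{\alpha_1+e_i}\phi$ in $L^\infty_x$ via Sobolev embedding, using $K\geq 8$) from $|\alpha_1|\geq [K/2]+1$ (place the lower-order $f$-factor in $L^\infty_x$). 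The three sub-terms with at least one $\mathbf{P}$-factor are handled analogously to $J_{2,1}$--$J_{2,3}$ of Lemma \ref{lem.vphi.x}, producing factors $\|\partial^{\alpha-\alpha_1}(a_\pm,b,c)\|$ or $\|\partial^\alpha(a_\pm,b,c)\|$ which fit into $\mathcal D_l^{1/2}$ (the nonzero-order macroscopic terms) or $\CE_l^{1/2}$, together with $\|\nabla_x^2\phi\|_{H^{K-1}}\lesssim \CE_l^{1/2}$. For the purely microscopic sub-term $(\mathbf{I}-\mathbf{P})(\mathbf{I}-\mathbf{P})$, I follow the introduction's trick: write $\partial_{v_i}\partial^{\alpha-\alpha_1}=\partial_{v_i}^{1/2}\partial_{v_i}^{1/2}\partial^{\alpha-\alpha_1}$ and distribute one half $v$-derivative onto each factor, relying on $s\geq 1/2$ so that the anisotropic $N^s_\gamma$-dissipation norm controls such half derivatives; the identity $w_l(\alpha,0)=w_l(\alpha-e_i,0)\lag v\rag^{\gamma}$ is used to match the $w_l$-weights to the correct derivative orders.

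For $\alpha_1=0$, I would integrate by parts in $v_i$, using that both $\partial^{e_i}\phi$ and $e^{\pm\phi}$ are independent of $v$:
\begin{equation*}
\bigl(\partial^{e_i}\phi\,\partial_{v_i}\partial^\alpha f_\pm,\,e^{\pm\phi}w_l^2(\alpha,0)\partial^\alpha f_\pm\bigr)
= -\tfrac{1}{2}\int\!\!\int \partial^{e_i}\phi\,e^{\pm\phi}\,\partial_{v_i}[w_l^2(\alpha,0)]\,(\partial^\alpha f_\pm)^2\,dv\,dx.
\end{equation*}
The pointwise bound $|\partial_{v_i}w_l^2(\alpha,0)|\lesssim w_l^2(\alpha,0)\lag v\rag^{-1}$ together with $\|\partial^{e_i}\phi\|_{L^\infty}\lesssim\|E\|_{H^2}\lesssim\CE_l^{1/2}$ reduces matters to controlling $\int\!\!\int w_l^2(\alpha,0)\lag v\rag^{-1}|\partial^\alpha f_\pm|^2\,dv\,dx$. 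Splitting $\partial^\alpha f_\pm$ into $\mathbf{P}$ and $(\mathbf{I}-\mathbf{P})$ parts, the $\mathbf{P}$ portion reduces (via a bounded Gaussian velocity integral) to $\|\partial^\alpha(a_\pm,b,c)\|^2\lesssim \CD_l$ since $|\alpha|\geq 1$; for the microscopic part, the inequality $\lag v\rag^{-1}w_l^2(\alpha,0)\lesssim w_l^2(\alpha-e_i,0)\lag v\rag^{\gamma+2s}$ (which follows from $2\gamma-1\leq \gamma+2s$ and the definition of $w_l(\cdot,\cdot)$, using $s\geq 1/2$) allows the integrand to be absorbed into the dissipation norm of the appropriate order via the $L^2_{\gamma+2s}$-part of $\|\cdot\|_{N^s_\gamma}$.

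The main obstacle I anticipate is the $(\mathbf{I}-\mathbf{P})(\mathbf{I}-\mathbf{P})$ sub-term for the regime $1\leq|\alpha_1|\leq|\alpha|$: rigorously distributing the half-derivative $\partial_{v_i}^{1/2}$ onto the two microscopic factors, and showing that the resulting quantities are controlled by $\|w_l(\alpha',0)\partial^{\alpha'}(\mathbf{I}-\mathbf{P})f\|_{N^s_\gamma}$ for appropriate $\alpha'$, requires the careful commutator estimates between $w_l$ and fractional $v$-derivatives; the condition $s\geq 1/2$ is essential here, and the weight identity $w_l(\alpha,0)=w_l(\alpha-e_i,0)\lag v\rag^{\gamma}$ is precisely what enables the matching of weights in the dissipation norm.
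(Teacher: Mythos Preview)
Your overall architecture matches the paper's: split $\alpha_1=0$ versus $1\leq|\alpha_1|\leq|\alpha|$, integrate by parts in $v_i$ for the former, and for the latter isolate the macroscopic contributions and then redistribute a half $v$-derivative on the purely microscopic piece. The paper in fact splits only the middle factor $\partial_{e_i}^{\alpha-\alpha_1}f_\pm$ into macro/micro (keeping the rightmost $\partial^\alpha f_\pm$ intact), and for the microscopic piece it makes the half-derivative rigorous via Parseval in $v$: one writes $w_l^2(\alpha,0)=\big[w_l(\alpha-\alpha_1,0)\,w^{-|\alpha_1|/2}\big]\big[w_l(\alpha,0)\,w^{-|\alpha_1|/2}\big]$, commutes the first bracket inside $\partial_{e_i}$ (the commutator term is lower order), and then bounds $|\xi_i|\leq\langle\xi\rangle^{1/2}\langle\xi\rangle^{1/2}$ in frequency. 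The extra factor $w^{-|\alpha_1|/2}=\langle v\rangle^{\gamma|\alpha_1|/2}\leq\langle v\rangle^{\gamma/2}$ is exactly what converts $|\cdot|_{H^s_v}$ into $|\cdot|_{H^s_\gamma}$. This is the concrete resolution of the ``obstacle'' you correctly flag; your weight identity $w_l(\alpha,0)=w_l(\alpha-e_i,0)\langle v\rangle^\gamma$ is not quite the right bookkeeping for this step.

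There is, however, a genuine slip in your $\alpha_1=0$ argument. After the integration by parts you place $\|\partial^{e_i}\phi\|_{L^\infty}$ in $\CE_l^{1/2}$ and then try to put the full $\int\!\!\int w_l^2(\alpha,0)\langle v\rangle^{-1}|\partial^\alpha f_\pm|^2$ into $\CD_l$. Your weight shift $\langle v\rangle^{-1}w_l^2(\alpha,0)\lesssim w_l^2(\alpha-e_i,0)\langle v\rangle^{\gamma+2s}$ is correct as an inequality, but the resulting norm $\|w_l(\alpha-e_i,0)\partial^\alpha(\mathbf{I}-\mathbf{P})f\|_{L^2_{\gamma+2s}}$ carries a weight that is \emph{strictly stronger} than $w_l(\alpha,0)$ and is therefore not controlled by $\CD_l$ (the weight and the derivative order must match). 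Nor can you avoid the shift: since $\gamma+2s$ may lie in $(-2,-1)$, the direct bound $\langle v\rangle^{-1}\leq\langle v\rangle^{\gamma+2s}$ fails in general. The paper's fix is to use only $|\partial_{v_i}w_l^2(\alpha,0)|\lesssim\langle v\rangle^{(\gamma+2s)/2}w_l^2(\alpha,0)$ (valid because $\gamma+2s>-2$) and then split the two copies of $\partial^\alpha f_\pm$ asymmetrically: one factor goes to $\|w_l(\alpha,0)\partial^\alpha f_\pm\|\lesssim\CE_l^{1/2}$, the other to $\|w_l(\alpha,0)\partial^\alpha f_\pm\|_{L^2_{\gamma+2s}}\lesssim\CD_l^{1/2}$, while $\|\nabla_x\phi\|_{H^2}\lesssim\CD_l^{1/2}$ supplies the second dissipation factor. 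No macro/micro split of $\partial^\alpha f_\pm$ is needed here.
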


\begin{proof}
Denote the left-hand term of (\ref{vdx2}) by $J_4$. When $\alpha_1=0$, by taking integration by parts with respect to $v_i$, one has
$$
\begin{array}{rlll}
\begin{split}
J_4\lesssim& \sup\limits_{x}|\partial^{e_i}
\phi|\int_{{\bf R}^3}\int_{{\bf R}^3}|\partial^{\alpha}f_{\pm}|^2\left|\partial_{e_i}w_l^2(\alpha,0)\right|dxdv\\[2mm]
\lesssim& {\left\|\nabla_x\phi\right\|_{H^{2}}\left\|w_l(\alpha,0)\partial^{\alpha}f_{\pm}\right\|
\left\|w_l(\alpha,0)\partial^{\alpha}f_{\pm}\right\|_{L^{2}_{\gamma+2s}}\lesssim\mathcal {E}^{\frac{1}{2}}_{l}(t)\mathcal {D}_{l}(t),}
\end{split}
\end{array}
$$  
where the inequality $|\partial_{e_i}w_l^2(\alpha,0)|\lesssim \langle v\rangle^{\frac{\gamma+2s}{2}}w_l^2(\alpha,0)$ has been used due to $-3<\ga<-2s$ and $1/2\leq s<1$.

Whenever $\alpha_1>0$, one can write
$$
J_4=J_{4,1}+J_{4,2},
$$
with
$$
J_{4,1}=\sum\limits_{1\leq|\alpha_1|\leq|\alpha|}\left(\partial^{\alpha_1+e_i}
\phi\partial_{e_i}^{\alpha-\alpha_1}{\bf P}_{\pm}f,e^{\pm\phi}w_l^2(\alpha,0)\partial^{\alpha}f_{\pm}\right),
$$
$$
J_{4,2}=\sum\limits_{1\leq|\alpha_1|\leq|\alpha|}\left(\partial^{\alpha_1+e_i}
\phi\partial_{e_i}^{\alpha-\alpha_1}({\bf I}_{\pm}-{\bf P}_{\pm})f,e^{\pm\phi}w_l^2(\alpha,0)\partial^{\alpha}f_{\pm}\right).
$$
We now estimate $J_{4,1}$ and $J_{4,2}$ as follows. By using the similar argument as for estimating $J_{2,1}$, we deduce that
$$
J_{4,1}\lesssim
\|\nabla_x^2\phi\|_{H^{K-1}}\sum\limits_{1\leq|\alpha|\leq K}\left\{\|\partial^\alpha (a_{\pm},b,c)\|\cdot \|\partial^\alpha f_{\pm}\|_{L^2_{\gamma}}\right\}.
$$
To estimate $J_{4,2}$, we use the trick as in \cite{Guo5}. First of all, notice that $J_{4,2}$ can be written as
$$
\begin{array}{rlll}
\begin{split}
J_{4,2}=&\sum\limits_{1\leq|\alpha_1|\leq|\alpha|}\bigg\{\left(\partial^{\alpha_1+e_i}
\phi\partial_{e_i}\left[w_l(\alpha-\alpha_1,0)w^{-\frac{|\alpha_1|}{2}}\partial^{\alpha-\alpha_1}({\bf I}_{\pm}-{\bf P}_{\pm})f\right],e^{\pm\phi}w_l(\alpha,0)w^{-\frac{|\alpha_1|}{2}}\partial^{\alpha}f_{\pm}\right)\\
&-\left(\partial^{\alpha_1+e_i}
\phi\partial_{e_i}\left[w_l(\alpha-\alpha_1,0)w^{-\frac{|\alpha_1|}{2}}\right]
w^{-\frac{|\alpha_1|}{2}}\partial^{\alpha-\alpha_1}({\bf I}_{\pm}-{\bf P}_{\pm})f,e^{\pm\phi}w_l(\alpha,0)\partial^{\alpha}f_{\pm}\right)\bigg\}
\\ =&J^{(1)}_{4,2}+J^{(2)}_{4,2}.
\end{split}
\end{array}
$$
For the term $J^{(2)}_{4,2}$, it is straightforward to estimate it by
$$
\begin{array}{rlll}
\begin{split}
C\int_{{\bf R}^3}&\left|\partial^{\alpha_1+e_i}\phi\right|\left|w_l(\alpha-\alpha_1,0)\partial^{\alpha-\alpha_1}({\bf I}_{\pm}-{\bf P}_{\pm})f\right|_{L^2_{\gamma+2s}}\left|w_l(\alpha,0)\partial^{\alpha}f_{\pm}\right|_{L^2_{\gamma+2s}}dx
\\
\lesssim& \|\nabla_x^2\phi\|_{H^{K-1}}\sum\limits_{|\alpha|\leq K}\left\|w_l(\alpha,0)\partial^\alpha ({\bf I}_{\pm}-{\bf P}_{\pm})f\right\|^2_{L^2_{\gamma+2s}}\sum\limits_{1\leq|\alpha|\leq K}\left\|w_l(\alpha,0)\partial^\alpha f_{\pm}\right\|^2_{L^2_{\gamma+2s}}.
\end{split}
\end{array}
$$
For $J^{(1)}_{4,2}$, noticing $1/2\leq s<1$, by the Parseval
 identity, one has
$$
\begin{array}{rlll}
\begin{split}
J^{(1)}_{4,2}\lesssim&{\int_{{\bf R}^3}\left|\int_{{\bf R}^3}
\mathbbm{i}\xi_i
\CF_v\left[w_l(\alpha-\alpha_1,0)w^{-\frac{|\alpha_1|}{2}}
\partial^{\alpha-\alpha_1}({\bf I}_{\pm}-{\bf P}_{\pm})f\right]
\overline{\CF_v\left[w_l(\alpha,0)w^{-\frac{|\alpha_1|}{2}}\partial^{\alpha}f_{\pm}\right]}d\xi\right|}
\\
&\qquad \times\left|\partial^{\alpha_1+e_i}\phi\right|dx
\\
\lesssim&{\int_{{\bf
R}^3}\left|\langle\xi\rangle^{\frac{1}{2}}
\CF_v\left[w_l(\alpha-\alpha_1,0)w^{-\frac{|\alpha_1|}{2}}
\partial^{\alpha-\alpha_1}({\bf I}_{\pm}-{\bf
P}_{\pm})f\right]\right|_{L^2_\xi}
\left|\langle\xi\rangle^{\frac{1}{2}}
\CF_v\left[w_l(\alpha,0)w^{-\frac{|\alpha_1|}{2}}\partial^{\alpha}f_{\pm}\right]\right|_{L^2_\xi}}
\\
&\qquad \times\left|\partial^{\alpha_1+e_i}\phi\right|dx
\\
\lesssim&\int_{{\bf R}^3}\left|w_l(\alpha-\alpha_1,0)w^{-\frac{|\alpha_1|}{2}}\partial^{\alpha-\alpha_1}({\bf I}_{\pm}-{\bf P}_{\pm})f\right|_{H^s_v}\left|w_l(\alpha,0)w^{-\frac{|\alpha_1|}{2}}\partial^{\alpha}f_{\pm}\right|_{H^s_v}
\\
&\qquad \times\left|\partial^{\alpha_1+e_i}\phi\right|dx
\\
\lesssim& \|\nabla_x^2\phi\|_{H^{K-1}}\sum\limits_{|\alpha|\leq
K}\left\|w_l(\alpha,0)\partial^\alpha ({\bf I}_{\pm}-{\bf
P}_{\pm})f\right\|^2_{H^{s}_{\gamma}}\sum\limits_{1\leq|\alpha|\leq
K}\left\|w_l(\alpha,0)\partial^\alpha
f_{\pm}\right\|^2_{H^{s}_{\gamma}},
\end{split}
\end{array}
$$
where $\CF_v$ is the Fourier transform with respect to $v$-variable, $\xi$ denotes the corresponding frequency variable, $\bar{\cdot}$ denotes the complex conjugate, and $\mathbbm{i}=\sqrt{-1}\in\mathbb{C}$ is the pure imaginary unit.
Collecting the estimates above, this completes the proof of Lemma \ref{lem.phi.x}.
\end{proof}


\begin{Lemma}\label{lem.phi.xv}
Let $1\leq|\alpha|+|\beta|\leq K$ with $|\beta|\geq1$, and $l\geq0$. Suppose  $\sqrt{\mathcal {E}_{l}(t)}<\de$
for a constant $\de>0$. Then, it holds that
\begin{equation}\label{vdxv2}
\sum\limits_{\alpha_1\leq\alpha}\left(
\partial^{\alpha_1+e_i}\phi\partial_{\beta+e_i}^{\alpha-\alpha_1}({\bf I}_{\pm}-{\bf P}_{\pm})f,e^{\pm\phi}w_l^2(\alpha,\beta)\partial_\beta^{\alpha}({\bf I}_{\pm}-{\bf P}_{\pm})f\right)\lesssim \mathcal {E}^{\frac{1}{2}}_{l}(t)\mathcal {D}_{l}(t).
\end{equation}
\end{Lemma}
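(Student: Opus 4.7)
The plan is to adapt the argument of Lemma \ref{lem.phi.x} to the case $|\beta|\geq 1$, handling the extra velocity derivative arising from $\pa^{\al-\al_1}_{\be+e_i}=\pa_{v_i}\pa_\be^{\al-\al_1}$ by integration by parts when $\al_1=0$ and by the Parseval/half-derivative trick used for $J^{(1)}_{4,2}$ in Lemma \ref{lem.phi.x} when $0<\al_1\leq\al$. Since both factors of the inner product are microscopic, no macro-micro decomposition is needed, which is actually a simplification compared with Lemma \ref{lem.phi.x}. Denote by $J_5$ the left-hand side of \eqref{vdxv2} and split the sum over $\al_1=0$ and $0<\al_1\leq\al$.

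\textbf{Case $\al_1=0$.} Here $\pa^{e_i}\phi$ and $e^{\pm\phi}$ are $v$-independent, so integrating by parts in $v_i$ against $w_l^2(\al,\be)\,\pa_\be^\al({\bf I}-{\bf P})f$ forces $\pa_{v_i}$ to land either on the weight or on the other $\pa_\be^\al({\bf I}-{\bf P})f$ factor; the latter contribution equals the original inner product and can be absorbed, yielding the closed identity
\[
2J_5\big|_{\al_1=0}=-\int \pa^{e_i}\phi\,e^{\pm\phi}\bigl|\pa^\al_\be({\bf I}-{\bf P})f\bigr|^2\,\pa_{v_i}\bigl[w_l^2(\al,\be)\bigr]\,dx\,dv.
\]
From $w=\lag v\rag^{-\ga}$ one reads off $|\pa_{v_i}w_l^2(\al,\be)|\lesssim \lag v\rag^{-1}w_l^2(\al,\be)$, and since $\ga>-3$ with $s\geq 1/2$ forces $\ga+2s>-2$, one has $\lag v\rag^{-1}\lesssim \lag v\rag^{(\ga+2s)/2}$. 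Sobolev on $\pa^{e_i}\phi$ together with $\|w_l(\al,\be)\pa^\al_\be({\bf I}-{\bf P})f\|_{L^2_{\ga+2s}}^2\lesssim \CD_l(t)$ (the $L^2_{\ga+2s}$ norm is the local part of the dissipation norm $N^s_\ga$) then gives $|J_5|_{\al_1=0}|\lesssim \CE_l^{1/2}(t)\CD_l(t)$.

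\textbf{Case $0<\al_1\leq\al$.} I split the weight as $w_l^2(\al,\be)=W_1W_2$ with $W_1=w_l(\al-\al_1,\be)w^{-|\al_1|/2}$ and $W_2=w_l(\al,\be)w^{-|\al_1|/2}$, and use the Leibniz formula $W_1\pa_{v_i}\pa_\be^{\al-\al_1}f=\pa_{v_i}[W_1\pa_\be^{\al-\al_1}f]-(\pa_{v_i}W_1)\pa_\be^{\al-\al_1}f$ to decompose the corresponding contribution to $J_5$ into a main term (with $\pa_{v_i}$ acting on $W_1\pa_\be^{\al-\al_1}f$) and a commutator term, exactly as for $J^{(1)}_{4,2}$ and $J^{(2)}_{4,2}$ in Lemma \ref{lem.phi.x}. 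The commutator is handled via $|\pa_{v_i}W_1|\lesssim \lag v\rag^{-1}W_1$ and $\lag v\rag^{-1}\lesssim \lag v\rag^{(\ga+2s)/2}$, producing an $L^2_{\ga+2s}$-weighted bound on each factor. For the main term, I apply Parseval in the $v$-variable: the Fourier multiplier $\mathbbm{i}\xi_i$ is distributed as $\lag\xi\rag^{1/2}\cdot\lag\xi\rag^{1/2}$ by Cauchy--Schwarz, yielding
\[
\text{main term}\lesssim \int_{\R^3}|\pa^{\al_1+e_i}\phi|\,\bigl|W_1\pa_\be^{\al-\al_1}({\bf I}-{\bf P})f\bigr|_{H^s_v}\bigl|W_2\pa_\be^\al({\bf I}-{\bf P})f\bigr|_{H^s_v}\,dx,
\]
where $s\geq 1/2$ is invoked via $H^{1/2}_v\hookrightarrow H^s_v$; the resulting weighted $H^s_v$-norms fit inside the $H^s_\ga$-component of the dissipation norm $\|\cdot\|_{N^s_\ga}$. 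The $x$-integral is closed by the familiar Sobolev dichotomy: for $|\al_1|\leq K/2$ one puts $\pa^{\al_1+e_i}\phi$ in $L^\infty_x\lesssim \|\na_x^2\phi\|_{H^{K-1}}\lesssim \CE_l^{1/2}(t)$, while for $|\al_1|\geq K/2+1$ one places instead the lower-derivative $({\bf I}-{\bf P})f$ factor in $L^\infty_x$ and controls it by its $H^2_x$-Sobolev norm, again inside $\CE_l^{1/2}(t)$. In either subcase the remaining two $L^2_x$-factors combine into $\CD_l(t)$, completing the bound.

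\textbf{Main obstacle.} The delicate point is the main term in Case $0<\al_1\leq\al$: the naive distribution leaves $\pa_{\be+e_i}$ entirely on one factor and pushes the velocity derivative count to $|\be|+1$, which can exceed the regularity budget $K$ when $|\al|+|\be|=K$. The Parseval trick symmetrically splits $\pa_{v_i}$ into two half-derivatives, which is precisely what the anisotropic dissipation $\|\cdot\|_{N^s_\ga}$ can absorb once $s\geq 1/2$. This is the same mechanism used throughout Section \ref{sec.3}, and is another instance where the structural hypothesis $s\geq 1/2$ enters essentially into the nonlinear estimates.
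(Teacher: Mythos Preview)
Your proposal is correct and follows exactly the route the paper intends: the paper's own proof is the single sentence ``\eqref{vdxv2} can be proved by using the same argument as for Lemma~\ref{lem.phi.x}, and the details are omitted for brevity,'' and you have faithfully reproduced those details---the $\al_1=0$ integration-by-parts, the $W_1W_2$ splitting, the Parseval half-derivative redistribution, and the Sobolev dichotomy in $|\al_1|$. Two minor slips worth cleaning up: the embedding should read $H^s_v\hookrightarrow H^{1/2}_v$ (you need $\|\cdot\|_{H^{1/2}}\leq\|\cdot\|_{H^s}$, not the reverse), and in the subcase $|\al_1|\geq K/2+1$ the $L^\infty_x$ factor carries an $H^s_v$-norm and therefore lands in $\CD_l^{1/2}$ rather than $\CE_l^{1/2}$, with $\|\pa^{\al_1+e_i}\phi\|$ supplying the $\CE_l^{1/2}$ factor instead---the final bound $\CE_l^{1/2}\CD_l$ is unaffected.
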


\begin{proof}
(\ref{vdxv2})  can be proved by using the same argument as for Lemma  \ref{lem.phi.x}, and the details are omitted for brevity.
\end{proof}

\section{The linearized system and time-decay}\label{sec.4}

In this section, we consider the Cauchy
problem on the linearized system with a nonhomogeneous source
$h=[h_+, h_-]$:
\begin{eqnarray}\label{LB eqn}
\left\{\begin{array}{rlll}
\begin{split}
&\partial_{t}f_{\pm}+v\cdot\nabla_{x}f_{\pm}\pm\sqrt{\mu}v\cdot \nabla_x\phi+Lf_{\pm}=h_{\pm},\\
&-\Delta_x\phi=\rho_f=\int_{{\bf R}^3}(f_+-f_-)\sqrt{\mu}\,dv,\ \ \phi\rightarrow0\ \ \text{as} \ |x|\rightarrow\infty, \\
&f_{\pm}|_{t=0}=f_{0,\pm},
\end{split}
\end{array}\right.
\end{eqnarray}
where $h_{\pm}(t,x,v)$, $f_{0,\pm}=f_{0,\pm}(x,v)$
are given. Notice that for the nonlinear Vlasov-Poisson-Boltzmann system
(\ref{perturbed eqn}) and (\ref{perturbed E}), the nonhomogeneous
source takes the form of
\begin{equation}\label{h VPB}
h_{\pm}=\pm \nabla_x\phi\cdot\nabla_v f_{\pm}\mp \frac{1}{2}\nabla_x\phi\cdot
vf_{\pm}+\Gamma_{\pm}(f,f),
\end{equation}
which satisfies the mass conservation laws
\begin{equation}\label{nonh cons}
\langle h_{\pm},\sqrt{\mu}\rangle=0.
\end{equation}

Whenever the linearized system is homogeneous, i.e.~$h=0$, the formal solution to the Cauchy problem (\ref{LB eqn}) can be
written as the mild form
\begin{equation}\label{formal sol}
f=S(t)f_0,
\end{equation}
where $S(t)$ denotes the solution operator for the Cauchy problem on
the linearized system without any source. For an integer $k\geq 0$, set the index $\sigma_k$ of the time-decay rate by
$$
\sigma_k=\frac{3}{4}+\frac{k}{2}.
$$
The time-decay property of $S(t)$ is stated in the following

\begin{Theorem}\label{basic decay lemma}
Recall $w=w(v)=\lag v\rag^{-\ga}$. Let $-3<\gamma<-2s$, $1/2\leq s<1$, $l\geq 0$ and $k\geq 0$ be an integer, and let $l_\ast>\sigma_{k}(\gamma+2s)/\gamma$. Assume that
\begin{equation*}
\int_{{\bf R}^3}\rho_{f_0}\,dx=0,\ \ \int_{{\bf R}^3}(1+|x|)|\rho_{f_0}|\,dx<\infty,
\end{equation*}
where
$$
\rho_{f_0}=\int_{{\bf R}^3}(f_{0,+}-f_{0,-})\sqrt{\mu}\,dv.
$$
Then, for $f(t)=S(t)f_0$, it holds that
\begin{equation}\label{basic decay}
\begin{split}
\left\|w^{l}\nabla_{x}^{k}f(t)\right\|+\|\nabla_{x}^{k}E(t)\| \lesssim
(1+t)^{-\sigma_{k}}\left(\left\|w^{l+l_\ast}f_{0}\right\|_{Z_1}+\left\|(1+|x|)\rho_{f_0}\right\|_{L^1}
+\left\|w^{l+l_\ast}\nabla_{x}^{k}f_0\right\|\right),
\end{split}
\end{equation}
for any $t\geq 0$.
\end{Theorem}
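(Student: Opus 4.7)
The plan is to follow the combined spectral/weighted-energy scheme developed in \cite{Strain11,DYZ-s}. Via Fourier transform in $x$, split the solution operator $S(t)=S^{L}(t)+S^{H}(t)$ using a low-frequency cutoff $\chi_{|\xi|\leq r_0}$ for some small $r_0>0$. The two hypotheses $\int\rho_{f_0}\,dx=0$ and $\int(1+|x|)|\rho_{f_0}|\,dx<\infty$ are designed to control the nonlocal Poisson response $\hat E(\xi)=\rmi\xi\hat\rho_f(\xi)/|\xi|^{2}$ at low frequencies: they imply $\hat\rho_{f_0}(0)=0$ and $|\hat\rho_{f_0}(\xi)|\lesssim |\xi|\,\|(1+|x|)\rho_{f_0}\|_{L^{1}}$ near $\xi=0$, which exactly cancels the $|\xi|^{-1}$ singularity in $\hat E$ that would otherwise slow down the decay rate.

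For $S^{L}(t)$, I would carry out a spectral expansion of the Fourier symbol $B(\xi)=-\rmi v\cdot\xi+L$ augmented by the Poisson coupling, along the lines of \cite{DuanS1,DuanS2}. Near $\xi=0$ the fluid-type eigenvalues satisfy $\rmre\,\lambda_j(\xi)\sim -c|\xi|^{2}$, and the plasma-oscillation branches contributed by the two-species coupling remain strictly dissipative. Plancherel together with the embedding $\widehat{\cdot}:Z_1\to L^{\infty}_\xi L^{2}_v$ then gives
\begin{equation*}
\|w^{l}\na_x^{k}S^{L}(t)f_0\|+\|\na_x^{k}E^{L}(t)\|\lesssim (1+t)^{-\sigma_{k}}\bigl(\|w^{l+l_\ast}f_0\|_{Z_1}+\|(1+|x|)\rho_{f_0}\|_{L^1}\bigr).
\end{equation*}

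For $S^{H}(t)$, since $L$ has no spectral gap when $-3<\ga<-2s$, I would appeal to Lemma \ref{estimate on L}(ii) to produce the weighted dissipative estimate
\begin{equation*}
\frac{d}{dt}\|w^{l+l_\ast}\na_x^{k}f\|^{2}+\la\,\|w^{l+l_\ast}\na_x^{k}f\|_{N^{s}_{\ga}}^{2}\lesssim \|\na_x^{k}f\|_{L^{2}_xL^{2}(B_{C})}^{2},
\end{equation*}
whose right-hand side, being localized in $v$ and already dominated by the basic energy inequality, is bounded by the low-frequency contribution treated above. This yields $\sup_{t\geq 0}\|w^{l+l_\ast}\na_x^{k}f(t)\|\lesssim \|w^{l+l_\ast}\na_x^{k}f_0\|+\|w^{l+l_\ast}f_0\|_{Z_1}$. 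The key step is the interpolation
\begin{equation*}
\|w^{l}\na_x^{k}f\|^{2}\lesssim \|w^{l}\na_x^{k}f\|_{N^{s}_{\ga}}^{2\theta}\,\|w^{l+l_\ast}\na_x^{k}f\|^{2(1-\theta)},
\end{equation*}
valid for a suitable $\theta\in(0,1)$ under the condition $l_\ast>\sigma_k(\ga+2s)/\ga$; it converts the dissipation into an ODE inequality of the form $\frac{d}{dt}Y(t)+CY(t)^{1+\nu}\lesssim g(t)$ whose solutions decay at rate $(1+t)^{-\sigma_k}$ when the forcing $g(t)$ falls off appropriately, as guaranteed by the low-frequency estimate.

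Adding the two contributions delivers \eqref{basic decay}. I expect the main obstacle to lie in the low-frequency spectral analysis, where one must explicitly diagonalize the two-species linearized symbol including the self-consistent Poisson coupling and verify that both the acoustic and plasma-oscillation branches remain strictly dissipative with the expected $\sim -c|\xi|^{2}$ behavior; this is precisely the place where the two moment hypotheses on $\rho_{f_0}$ become indispensable for upgrading the decay of $E$ from the naive $(1+t)^{-1/4-k/2}$ to the full $\sigma_k$-rate matching that of $f$.
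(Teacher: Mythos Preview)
Your outline is a genuinely different route from the paper's. You propose a low/high frequency splitting $S(t)=S^{L}(t)+S^{H}(t)$ and, for the low-frequency piece, an explicit \emph{spectral expansion} of the Fourier symbol $-\rmi v\cdot\xi+L$ coupled to Poisson, in the style of \cite{DuanS1,DuanS2}. The paper does \emph{not} carry out any eigenvalue analysis. Instead it works entirely with time--frequency Lyapunov functionals: via Lemma~\ref{lem.lide.f} it constructs a single functional $\mathscr{E}_l(t,\xi)\sim |w^{l}\hat f|_2^2+|\hat E|^2$ satisfying
\[
\partial_t\mathscr{E}_l(t,\xi)+\lambda\,\tfrac{|\xi|^2}{1+|\xi|^2}\Bigl\{\bigl|w^{\,l-\frac{\gamma+2s}{2\gamma}}\hat f\bigr|_{L^2}^2+|\hat E|^2\Bigr\}\le 0,
\]
and then applies Strain's H\"older interpolation in the weight index, $\mathscr{E}_l\lesssim \mathscr{E}_{l-\frac{\gamma+2s}{2\gamma}}^{j/(j+1)}\mathscr{E}_{l+j\frac{\gamma+2s}{2\gamma}}^{1/(j+1)}$, to turn this into a closed ODE inequality in $\xi$ whose solution decays like $(1+\tfrac{\lambda t}{j}\cdot\tfrac{|\xi|^2}{1+|\xi|^2})^{-j}$. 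The split into $|\xi|\le 1$ and $|\xi|\ge 1$ occurs only at the final integration step. Your identification of why the two moment hypotheses on $\rho_{f_0}$ are needed is exactly right and matches the paper's use in \eqref{est.rho}.

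The main concern with your plan is the low-frequency spectral step. The references \cite{DuanS1,DuanS2} treat the \emph{cut-off} Boltzmann operator, where $L$ has a spectral gap and the perturbation theory producing discrete branches $\lambda_j(\xi)$ near $\xi=0$ is classical. For the non--cut-off soft-potential case at hand, $L$ has no spectral gap and the requisite compactness to isolate finitely many fluid eigenvalues near the origin is not available from those references; establishing the analogue would be a separate (nontrivial) project. This is precisely the difficulty the paper sidesteps by replacing spectral analysis with the macroscopic interaction functionals of Lemma~\ref{lem.lide.f}. Your high-frequency argument and the weight-interpolation mechanism are essentially the same as the paper's (both descend from \cite{Strain11}); the substantive divergence is only in how the low-frequency macroscopic dissipation is extracted.
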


Before proving Theorem \ref{basic decay lemma}, we make some preparations as follows. Firstly, for the later use in the general situation,  let us consider the linearized system \eqref{LB eqn} with the nonhomogeneous source
$h$ satisfying \eqref{nonh cons}.
As in \cite{Guo2},  one can derive  the corresponding local conservation laws. In fact, from multiplying the first equation of \eqref{LB eqn} by
$\sqrt{\mu}$, $v_j\sqrt{\mu}$ $(j=1,2,3)$ and
$\frac{1}{6}\left(|v|^2-3\right)\sqrt{\mu}$ and then
integrating them over $v\in{\bf R}^3$, one has
\begin{equation}\label{conservation a}
\partial_ta_{\pm}+\nabla_{x}\cdot
b=-\nabla_x\cdot\langle v\sqrt{\mu}, ({\bf I}_{\pm}-{\bf
P}_{\pm})f\rangle,
\end{equation}
\begin{equation}\label{conservation b}
\begin{split}
\partial_t\left\{b_{j}+\langle v_j\sqrt{\mu}, ({\bf I}_{\pm}-{\bf
P}_{\pm})f\rangle\right\}&+\partial_{j}(a_{\pm}+2c)\mp
E_j\\
=&-\left\langle v\cdot\nabla_{x} ({\bf I}_{\pm}-{\bf
P}_{\pm})f,v_j\sqrt{\mu}\right\rangle +\left\langle
h_{\pm}-L_{\pm},v_j\sqrt{\mu}\right\rangle,
\end{split}
\end{equation}
\begin{equation}\label{conservation c}
\begin{split}
\partial_t\left\{c+\frac{1}{6}\left\langle \left(|v|^2-3\right)\sqrt{\mu}, ({\bf I}_{\pm}-{\bf
P}_{\pm})f\right\rangle\right\}+\frac{1}{3}\nabla_{x}\cdot b=&
-\frac{1}{6}\left\langle v\cdot\nabla_{x}({\bf I}_{\pm}-{\bf
P}_{\pm})f,(|v|^2-3)\sqrt{\mu}\right\rangle\\ &+\frac{1}{6}\left\langle
h_{\pm}-L_{\pm},(|v|^2-3)\sqrt{\mu}\right\rangle.
\end{split}
\end{equation}
Moreover, we need the equations of high-order moments. For that, as in  \cite{Duan, DuanS1}, we define the high-order moment functions
$\Xi(f_{\pm})=\left(\Xi_{jk}(f_{\pm})\right)_{3\times3}$ and
$\Pi(f_{\pm})=\left(\Pi_{1}(f_{\pm}), \Pi_{2}(f_{\pm}),
\Pi_{3}(f_{\pm})\right)$ by
$$
\Xi_{jk}(f_{\pm})=\langle(v_jv_k-1)\sqrt{\mu},f_{\pm}\rangle, \ \ \
\Pi_j(f_{\pm})=\frac{1}{10}\langle(|v|^2-5)v_j\sqrt{\mu},f_{\pm}\rangle.
$$
Then, by taking the velocity integrations of the first equation of \eqref{LB eqn} with respect to
the above high-order moments, one has
\begin{equation}\label{h momemt1}
\partial_t\{\Xi_{jj}(\{{\bf I}_{\pm}-{\bf
P}_{\pm}\}f)+2c\}+2\partial_jb_j=\Xi_{jj}(r_{\pm}+h_{\pm}),
\end{equation}
\begin{equation}\label{h momemt2}
\begin{split}
\partial_t\Xi_{jk}(\{{\bf I}_{\pm}-{\bf
P}_{\pm}\}f)+\partial_jb_k+&\partial_kb_j+\nabla_x\cdot\langle
v\sqrt{\mu}, ({\bf I}_{\pm}-{\bf
P}_{\pm})f\rangle\\ &=\Xi_{jk}(r_{\pm}+h_{\pm})+\left\langle
h_{\pm},\sqrt{\mu}\right\rangle,\ \ j\neq k,
\end{split}
\end{equation}
\begin{equation}\label{h momemt3}
\partial_t\Pi_{j}(\{{\bf I}_{\pm}-{\bf
P}_{\pm}\}f)+\partial_jc=\Pi_{j}(r_{\pm}+h_{\pm}),
\end{equation}
with
$
r_{\pm}=-v\cdot\nabla_x\{{\bf I}_{\pm}-{\bf P}_{\pm}\}f+L_{\pm}f.
$
Here, notice that we have used (\ref{conservation a}) to derive (\ref{h momemt2}).

Consequently, as in \cite{DuanS2}, by taking the mean value of every two equations with
$\pm$ sign for (\ref{conservation a})-(\ref{conservation c}) and noticing $\langle h_{\pm},\sqrt{\mu}\rangle=0$, one has
\begin{eqnarray}\label{conservation abc}
\left\{
\begin{array}{rlll}
\begin{split}
&\partial_t\left(\frac{a_++a_-}{2}\right)+\nabla_{x}\cdot
b=0,\\
&\partial_tb_{j}+\partial_{j}\left(\frac{a_++a_-}{2}+2c\right)+\frac{1}{2}
\partial_j\Xi_{jk}(\{{\bf I}-{\bf P}\}f\cdot[1,1])=\frac{1}{2}\langle h_++h_-, v_j\sqrt{\mu}\rangle,\\
&\partial_tc+\frac{1}{3}\nabla_{x}\cdot
b+\frac{5}{6}\sum\limits_{j=1}^3\partial_j\Pi_{j}(\{{\bf I}-{\bf
P}\}f\cdot[1,1])=\frac{1}{12}\left\langle
h_{+}+h_{-},(|v|^2-3)\sqrt{\mu}\right\rangle,
\end{split}
\end{array} \right.
\end{eqnarray}
for $1\leq j\leq3$, and similarly, it follows from (\ref{h
momemt1})-(\ref{h momemt3}) that
\begin{eqnarray*}
\left\{
\begin{array}{rlll}
\begin{split}
&\partial_t\left\{\frac{1}{2}\Xi_{jk}(\{{\bf I}-{\bf
P}\}f\cdot[1,1])+2c\delta_{jk}\right\}+\partial_jb_k+\partial_kb_j=\frac{1}{2}\Xi_{jk}(r_++r_{-}+h_++h_-),\\
&\frac{1}{2}\partial_t\Pi_{j}(\{{\bf I}-{\bf
P}\}f\cdot[1,1])+\partial_jc=\frac{1}{2}\Pi_{j}(r_{+}+r_{-}+h_{+}+h_{-}),
\end{split}
\end{array} \right.
\end{eqnarray*}
for $1\leq j,k\leq3$, and $\delta_{jk}$ denoted the Kronecker delta.
Moreover, in order to obtain the dissipation rate of the electric field $E$, by taking the difference of two
equations with $\pm$ sign for (\ref{conservation a}) and
(\ref{conservation b}), one has
\begin{equation}\label{a-a}
\partial_t(a_+-a_-)+\nabla_x\cdot G=0,
\end{equation}
\begin{equation}\label{b-b}
\partial_tG+\nabla(a_+-a_-)-2E+\nabla_x\cdot \Xi(({\bf I-P})f\cdot
q_1)=\langle h-Lf,[v\sqrt{\mu},-v\sqrt{\mu}]\rangle,
\end{equation}
where
\begin{equation}\label{G}
G=\langle v\sqrt{\mu}, ({\bf I-P})f\cdot q_1\rangle.
\end{equation}
Notice that the second equation of \eqref{LB eqn} gives
\begin{equation}\label{ea}
\nabla_x\cdot E=a_+-a_-.
\end{equation}


Now, we will prove Theorem \ref{basic decay lemma} by using as in
\cite{DuanS2} the following lemma  whose proof is omitted for
brevity. To state it,  in what follows we let $h$ be identical to
zero and let $f(t)=S(t)f_0$ be the solution to the Cauchy problem
\eqref{LB eqn} with $h\equiv 0$. Some more notations are given as
follows. For two complex vectors $z_1,z_2\in {\bf C}^{3}$,
$(z_1|z_2)=z_1\cdot \overline{z_2}$ denotes the dot product in the
complex field ${\bf C}$, where $\overline{z_2}$ is the complex
conjugate of $z_2$. $\hat{g}(\xi)$ denotes the Fourier
transform $\CF_x g$ with respect to the variable $x$, where for notational simplicity we still use $\xi$ to denote the corresponding frequency variable. In fact,
one has

\begin{Lemma}\label{lem.lide.f}
(i) For any $t\geq0$ and $\xi\in {\bf R}^3$, it holds that
\begin{equation*}
\partial_t\left\{|\hat{f}|^2+|\hat{E}|^2\right\}+\lambda |({\bf
I-P})\hat{f}|_{N^{s}_{\gamma}}^2\leq0.
\end{equation*}

\noindent (ii) There is a time-frequency interactive functional $\mathcal
{E}^{(0)}(t,\xi)$ defined by
\begin{equation*}
\begin{split}
\mathcal
{E}_{int}^{(0)}(t,\xi)=&\sum\limits_{j=1}^3\frac{1}{2}\left(\mathbbm{i}\xi_j\hat{c}|\Pi_{j}(\{{\bf
I}-{\bf
P}\}\hat{f}\cdot[1,1])\right)\\ &+\kappa_1\sum\limits_{j,k=1}^3
\left(\mathbbm{i}\xi_j\hat{b}_k+\mathbbm{i}\xi_k\hat{b}_j|\frac{1}{2}\Xi_{jk}(\{{\bf
I}-{\bf P}\}\hat{f}\cdot[1,1])+2\hat{c}\delta_{jk}\right)\\
&+\kappa_2\sum\limits_{j=1}^3
\left(\mathbbm{i}\xi_j\frac{\hat{a}_++\hat{a}_-}{2}|\hat{b}_j\right),
\end{split}
\end{equation*}
with two properly chosen constants $0<\kappa_2\ll\kappa_1$, such that for any $t\geq0$ and $\xi\in {\bf R}^3$,
\begin{equation*}
\begin{split}
\partial_t \Re\, \frac{\mathcal {E}_{int}^{(0)}(t,\xi)}{1+|\xi|^2}+\frac{\lambda|\xi|^2}{1+|\xi|^2}
\left(|\hat{a}_++\hat{a}_-|^2+|\hat{b}|^2+|\hat{c}|^2\right)\lesssim|\{{\bf
I}-{\bf P}\}\hat{f}|_{N^{s}_{\gamma}}^2.
\end{split}
\end{equation*}

\noindent (iii) There is $0<\kappa_3\ll1$ such that for any $t\geq0$ and $\xi\in {\bf R}^3$,
\begin{equation*}
\begin{split}
\frac{\partial_t
\Re\,(\hat{G}|\mathbbm{i}\xi(\hat{a}_+-\hat{a}_-))}{1+|\xi|^2}&-\kappa_3\partial_t
\Re\,(\hat{G}|\hat{E})+\lambda|\hat{a}_+-\hat{a}_-|^2+\lambda|\hat{E}|^2\\ \lesssim&|\{{\bf I}-{\bf P}\}\hat{f}|_{N^{s}_{\gamma}}^2.
\end{split}
\end{equation*}



\noindent (iv)
Let $[f,E]$ be the solution to the Cauchy problem \eqref{LB eqn} with $h=0$. Then there is a time-frequency interactive functional $\mathcal
{E}^{(1)}_{int}(t,\xi)$ such that
\begin{equation*}
\mathcal {E}^{(1)}_{int}(t,\xi)\sim \left|\widehat{f}\right|^2+|\widehat{E}|^2,
\end{equation*}
and for any $t\geq0$ and $\xi\in {\bf R}^3$,
\begin{equation}\label{zero instant energy h0}
\partial_t\mathcal {E}^{(1)}_{int}(t,\xi)+\frac{\lambda|\xi|^2}{1+|\xi|^2}
\left\{\left|\widehat{f}\right|_{L^2_{\gamma+2s}}^2+|\widehat{E}|^2\right\}
\leq0.
\end{equation}
\end{Lemma}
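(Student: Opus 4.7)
The plan is to prove the four parts of Lemma~\ref{lem.lide.f} in order by working entirely on the Fourier side of \eqref{LB eqn} with $h=0$, building the Lyapunov functional incrementally out of a natural energy identity, a macroscopic interaction functional, and an electric-field interaction functional.

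For part (i), I would Fourier-transform \eqref{LB eqn} in $x$, take the complex $L^2_v$ inner product of the first equation with $\overline{\hat f_\pm}$, sum in $\pm$, and take real parts. The streaming term $(\mathbbm{i}\xi\cdot v)\hat f_\pm$ is skew and contributes zero. The forcing term $\pm\sqrt\mu\,v\cdot \mathbbm{i}\xi\hat\phi$, when summed against $\overline{\hat f_\pm}$ and combined with the Poisson identity $|\xi|^2\hat\phi=\widehat{a_+-a_-}$ together with $\hat E=-\mathbbm{i}\xi\hat\phi$, telescopes into $\tfrac12\partial_t|\hat E|^2$ by a direct computation identical in spirit to the one in \cite{Guo2,DuanS2}. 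The linearized collision term is handled by the coercivity estimate \eqref{estimate on L2} from Lemma~\ref{estimate on L}(i), yielding the dissipation $\lambda|({\bf I}-{\bf P})\hat f|_{N^s_\gamma}^2$.

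For part (ii), I would work from the Fourier-transformed local conservation laws \eqref{conservation abc} together with the moment equations for $\Xi_{jk}$ and $\Pi_j$ to build the three cross terms in $\mathcal E^{(0)}_{int}$. Differentiating $\Re(\mathbbm{i}\xi_j\hat c\mid \Pi_j(\{{\bf I}-{\bf P}\}\hat f\cdot[1,1]))$ in time and substituting $\partial_t\hat c$ and $\partial_t\Pi_j$ produces $|\xi|^2|\hat c|^2$ plus errors controlled by $|({\bf I}-{\bf P})\hat f|_{N^s_\gamma}^2$ and by $|\xi||\hat b|\,|\xi||\hat c|$; the second block similarly produces $|\xi|^2|\hat b|^2$ with errors $|\xi|^2|\hat c|^2$ and microscopic terms; the third block produces $|\xi|^2|\hat a_++\hat a_-|^2$ with errors $|\xi|^2|\hat b|^2$. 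Choosing $0<\kappa_2\ll\kappa_1\ll 1$ and using Cauchy--Schwarz lets the errors be absorbed hierarchically. Dividing the whole functional by $1+|\xi|^2$ yields a uniform bound at both low and high frequencies, and the contribution of $r_\pm$ from \eqref{h momemt1}--\eqref{h momemt3} is dominated, after integration by parts in $v$, by $|({\bf I}-{\bf P})\hat f|_{N^s_\gamma}$.

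For part (iii), I would use \eqref{a-a}--\eqref{ea} directly. Differentiating $\Re(\hat G\mid \mathbbm{i}\xi(\hat a_+-\hat a_-))$ and plugging in $\partial_t(\hat a_+-\hat a_-)=-\mathbbm{i}\xi\cdot\hat G$ from \eqref{a-a} and $\partial_t\hat G$ from \eqref{b-b}, the term $\nabla(a_+-a_-)$ produces $|\xi|^2|\hat a_+-\hat a_-|^2$, and division by $1+|\xi|^2$ makes this uniformly comparable to $|\hat a_+-\hat a_-|^2$ (here $|\xi|^2\hat\phi=\widehat{a_+-a_-}$ is used to trade moments). The auxiliary term $-\kappa_3\partial_t\Re(\hat G\mid \hat E)$ replaces $\partial_t\hat E$ via $\mathbbm{i}\xi\cdot\hat E=\widehat{a_+-a_-}$ and \eqref{a-a} and produces $|\hat E|^2$ through the $-2E$ term in \eqref{b-b}; the remaining errors are of the form $|\hat G||\widehat{a_+-a_-}|$, absorbed by smallness of $\kappa_3$ and Cauchy--Schwarz together with $|\hat G|\lesssim |({\bf I}-{\bf P})\hat f|_{N^s_\gamma}$.

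For part (iv), I would form
\[
\mathcal E^{(1)}_{int}(t,\xi)=M\bigl(|\hat f|^2+|\hat E|^2\bigr)+\kappa_4\,\Re\frac{\mathcal E^{(0)}_{int}(t,\xi)}{1+|\xi|^2}+\kappa_5\!\left[\frac{\Re(\hat G\mid \mathbbm{i}\xi(\hat a_+-\hat a_-))}{1+|\xi|^2}-\kappa_3\Re(\hat G\mid\hat E)\right],
\]
where $M\gg 1$ and $0<\kappa_5\ll\kappa_4\ll 1$ are chosen so that the cross terms are dominated by $M(|\hat f|^2+|\hat E|^2)$, ensuring equivalence. Adding up (i)--(iii) with these weights gives a dissipation rate that controls $|({\bf I}-{\bf P})\hat f|_{N^s_\gamma}^2+\tfrac{|\xi|^2}{1+|\xi|^2}(|\hat a_\pm|^2+|\hat b|^2+|\hat c|^2+|\hat E|^2)$. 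Since $|{\bf P}\hat f|^2_{L^2_{\gamma+2s}}\simeq|\hat a_\pm|^2+|\hat b|^2+|\hat c|^2$ and $|({\bf I}-{\bf P})\hat f|^2_{L^2_{\gamma+2s}}\lesssim|({\bf I}-{\bf P})\hat f|_{N^s_\gamma}^2$, this yields the claimed inequality \eqref{zero instant energy h0}. The main obstacle will be the bookkeeping in part (ii): the macroscopic cross terms produce mutual $|\xi|^2$-errors in all three of $\hat a_\pm,\hat b,\hat c$, and the hierarchical absorption requires a careful choice of $\kappa_1,\kappa_2$, while keeping the moment remainders $r_\pm$ inside the microscopic dissipation norm $N^s_\gamma$ rather than a stronger weighted norm.
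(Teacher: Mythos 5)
Your proposal is correct and takes essentially the approach the paper intends: the paper omits the proof of this lemma and refers to the Fourier-space macro--micro interactive-functional method of \cite{DuanS2} (see also \cite{Duan}), which is precisely the per-frequency energy identity of your part (i) together with the hierarchical cross-term functionals and the $\hat G$--$\hat E$ coupling you construct in parts (ii)--(iv). One small point of precision: in part (iii) the low-frequency control of $|\hat a_+-\hat a_-|^2$ does not come from dividing by $1+|\xi|^2$ alone (that factor degenerates as $\xi\to 0$) but from the $\kappa_3$-block's full-strength $|\hat E|^2$ dissipation, generated by the $-2E$ term in \eqref{b-b}, combined with $\hat a_+-\hat a_-=\mathbbm{i}\xi\cdot\hat E$ --- a mechanism your Poisson ``trade'' already implicitly accommodates.
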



\begin{proof}[The proof of Theorem 4.1]
It follows from the first equation of (\ref{LB eqn}) that
\begin{equation*}
\begin{split}
\partial_t({\bf
I-P})\widehat{f}&+\mathbbm{i}\xi\cdot v({\bf I-P})\widehat{f}+ L({\bf
I-P})\widehat{f}\\ =&-({\bf
I-P})(\widehat{E}\cdot v)\sqrt{\mu}q_1-({\bf
I-P})\left[\mathbbm{i}\xi\cdot v{\bf P}\widehat{f}\right]+{\bf P}\left[\mathbbm{i}\xi\cdot v({\bf
I-P})\widehat{f}\right],
\end{split}
\end{equation*}
which yields that for any $l\geq 0$,
\begin{equation}\label{ft microzero energy}
\begin{split}
\frac{1}{2}\partial_t&\left|w^l({\bf
I-P})\widehat{f}\right|_2^2+\Re\,\left\langle L({\bf
I-P})\widehat{f},w^{2l}({\bf
I-P})\widehat{f}\right\rangle\\ &=-\Re\,\left\langle ({\bf
I-P})(\widehat{E}\cdot v)\sqrt{\mu}q_1+({\bf
I-P})\left[\mathbbm{i}\xi\cdot v{\bf P}\widehat{f}\right]-{\bf P}\left[\mathbbm{i}\xi\cdot v({\bf
I-P})\widehat{f}\right]
,w^{2l}({\bf
I-P})\widehat{f}\right\rangle.
\end{split}
\end{equation}
Notice that from Lemma \ref{estimate on L},
$$\mathcal{R}\left\langle L({\bf
I-P})\widehat{f},w^{2l}({\bf I-P})\widehat{f}\right\rangle\geq
\lambda\left|w^l({\bf
I-P})\widehat{f}\right|_{L^2_{\gamma+2s}}^2-C\left|({\bf
I-P})\widehat{f}\right|_{L^2_{B_C}}^2,
$$
and also
the right-hand term of (\ref{ft microzero energy}) can be bounded by
$$C|\widehat{E}|^2+C|\xi|^2\left|\widehat{f}\right|_{L^2_{\gamma+2s}}^2+\eta\left|w^l({\bf
I-P})\widehat{f}\right|_{L^2_{\gamma+2s}}^2,$$ for small $\eta>0.$

Plugging the above estimates into (\ref{ft microzero energy}), one
has
\begin{equation}\label{ft microzero energy2}
\begin{split}\partial_t\left|w^l({\bf
I-P})\widehat{f}\right|_{L^2}&+\lambda\left|w^l({\bf
I-P})\widehat{f}\right|_{L^2_{\gamma+2s}}^2\lesssim |\widehat{E}|^2+|\xi|^2\left|\widehat{f}\right|^2_{L^2_{\gamma+2s}}
+\left|({\bf
I-P})\widehat{f}\right|_{L^{2}_{B_C}}^2.
\end{split}
\end{equation}
In a similar way, starting with the first equation of (\ref{LB eqn}), the direct weighted estimates  also give
\begin{equation}\label{ft spatialzero energy}
\frac{1}{2}\partial_t\left|w^l\widehat{f}\right|_{L^{2}}^2+\lambda\left|w^l\widehat{f}\right|_{L^2_{\gamma+2s}}^2\lesssim
|\widehat{E}|^2+
\left|\widehat{f}\right|_{L^{2}_{B_C}}^2.
\end{equation}

In the case when $|\xi|\geq 1$ which implies $|\xi|^2/(1+|\xi|^2)\geq 1/2$,  one can combine (\ref{zero instant energy h0})
and  (\ref{ft spatialzero energy}) to obtain
\begin{equation}\label{ft spatialzero energy split1}
\partial_t\mathscr{E}_l^{1}(t,\xi)+\lambda\left\{\left|w^l\widehat{f}\right|_{L^2_{\gamma+2s}}^2+|\widehat{E}|^2\right\}\chi_{|\xi|\geq 1}\leq
0,
\end{equation}
where $\mathscr{E}_l^{1}(t,\xi)$ is defined by
\begin{equation*}
\mathscr{E}_l^1(t,\xi)=\left\{\mathcal
{E}_{int}^{(1)}(t,\xi)+\kappa_4\left|w^l\widehat{f}\right|_2^2\right\}\chi_{|\xi|\geq
1}
\end{equation*}
for a constant $\kappa_4>0$ small enough. In the case when
$|\xi|\leq 1$ which implies $|\xi|^2/(1+|\xi|^2)\geq |\xi|^2/2$, one
can combine (\ref{zero instant energy h0}) and (\ref{ft microzero
energy2}) on $|\xi|\leq 1$ to obtain
\begin{equation}\label{ft spatialzero energy split2}
\partial_t\mathscr{E}^0_l(t,\xi)+\lambda|\xi|^2\left\{\left|w^l\widehat{f}\right|_{L^2_{\gamma+2s}}^2+|\widehat{E}|^2\right\}
\chi_{|\xi|\leq 1}\leq
0,
\end{equation}
where $\mathscr{E}^0_l(t,\xi)$ is defined by
\begin{equation*}
\mathscr{E}^0_l(t,\xi)=\left\{\mathcal
{E}_{int}^{(1)}(t,\xi)+\kappa_5\left|w^l({\bf
I-P})\widehat{f}\right|_2^2\right\}\chi_{|\xi|\leq 1}
\end{equation*}
for a constant $\kappa_5>0$ small enough. Therefore, for $l\geq 0$,
by introducing
$$
\mathscr{E}_l(t,\xi)=\mathscr{E}^0_l(t,\xi)+\mathscr{E}^1_l(t,\xi)\sim
\left|w^l\hat{f}(t,\xi,v)\right|^2_2+|\widehat{E}|^2,
$$
it follows from (\ref{ft spatialzero energy split1}) and (\ref{ft
spatialzero energy split2}) that
$$
\partial_t\mathscr{E}_l(t,\xi)+\lambda\frac{|\xi|^2}{1+|\xi|^2}
\left\{\left|w^{l}\widehat{f}\right|_{L^2_{\gamma+2s}}^2+|\widehat{E}|^2\right\}\leq
0,
$$
that is,
\begin{equation}\label{El energy}
\partial_t\mathscr{E}_l(t,\xi)+\lambda\frac{|\xi|^2}{1+|\xi|^2}
\left\{\left|w^{l-\frac{\gamma+2s}{2\gamma}}\widehat{f}\right|_{L^2}^2+|\widehat{E}|^2\right\}\leq
0.
\end{equation}

Now, basing on the above estimate \eqref{El energy} for any $l\geq 0$, one can use the trick in either \cite{Strain11} or  \cite{DYZ-s} to deduce the desired estimate {\eqref{basic decay}}. We here use the trick in \cite{Strain11} to deal with the velocity degeneration. In fact, for $j>0$, it follows from the H\"{o}lder inequality that
\begin{equation*}
\begin{split}
\mathscr{E}_l(t,\xi)\lesssim&\mathscr{E}_{l-\frac{\gamma+2s}{2\gamma}}^{j/(j+1)}(t,\xi)
\mathscr{E}_{l+j\frac{\gamma+2s}{2\gamma}}^{1/(j+1)}(t,\xi)\lesssim
\left\{\left|w^{l-\frac{\gamma+2s}{2\gamma}}\widehat{f}\right|_{L^2}^2+|\widehat{E}|^2\right\}^{j/(j+1)}
\mathscr{E}_{l+j\frac{\gamma+2s}{2\gamma}}^{1/(j+1)}(t,\xi),
\end{split}
\end{equation*}
which  implies
\begin{equation*}
\begin{split}
\mathscr{E}_l^{(j+1)/j}(t,\xi)\lesssim&\left\{\left|w^{l-\frac{\gamma+2s}{2\gamma}}\widehat{f}\right|_{L^2}^2
+|\widehat{E}|^2\right\}
\mathscr{E}_{l+j\frac{\gamma+2s}{2\gamma}}^{1/j}(t,\xi)
\lesssim\left\{\left|w^{l-\frac{\gamma+2s}{2\gamma}}\widehat{f}\right|_{L^2}^2
+|\widehat{E}|^2\right\}
\mathscr{E}_{l+j\frac{\gamma+2s}{2\gamma}}^{1/j}(0,\xi).
\end{split}
\end{equation*}
Then, (\ref{El energy}) together with the above estimate give
\begin{equation*}
\partial_t\mathscr{E}_l(t,\xi)+\lambda\frac{|\xi|^2}{1+|\xi|^2}\mathscr{E}_l^{(j+1)/j}(t,\xi)
\mathscr{E}_{l+j\frac{\gamma+2s}{2\gamma}}^{-1/j}(0,\xi)\leq 0.
\end{equation*}
By solving the inequality, one has
$$j\mathscr{E}_l^{-1/j}(0,\xi)-j\mathscr{E}_l^{-1/j}(t,\xi)\leq -\lambda
t\frac{|\xi|^2}{1+|\xi|^2}\mathscr{E}_{l+j\frac{\gamma+2s}{2\gamma}}^{-1/j}(0,\xi).$$ Therefore, for any $l\geq 0$
and $j>0$, it holds that
\begin{equation}
\label{lide.p1}
\mathscr{E}_l(t,\xi)\lesssim\mathscr{E}_{l+j\frac{\gamma+2s}{2\gamma}}(0,\xi)\left(1+\frac{\lambda
t}{j}\cdot\frac{|\xi|^2}{1+|\xi|^2}\right)^{-j}.
\end{equation}
Here we have used
the fact that $\mathscr{E}_l(0,\xi)\lesssim \mathscr{E}_{l+j\frac{\gamma+2s}{2\gamma}}(0,\xi)$. By integrating \eqref{lide.p1} over $|\xi|\geq1$,
\begin{equation*}
\begin{split}
\int_{|\xi|\geq1}|\xi|^{2k}\mathscr{E}_l(t,\xi)d\xi\lesssim&\left(1+\frac{\lambda
t}{2j}\right)^{-j}\int_{|\xi|\geq1}|\xi|^{2k}\mathscr{E}_{l+j\frac{\gamma+2s}{2\gamma}}(0,\xi)d\xi\\
\lesssim&
(1+t)^{-j}\left\|w^{l+j\frac{\gamma+2s}{2\gamma}}\nabla_x^kf_0\right\|^2,
\end{split}
\end{equation*}
where we have used
$$
\int_{|\xi|\geq1}|\xi|^{2k}|\hat{E}_0|^2d\xi=\int_{|\xi|\geq1}|\xi|^{2k-2}|\cdot |\widehat{\rho_{f_0}}|^2d\xi\lesssim\int_{|\xi|\geq1}|\xi|^{2k}|\cdot |\widehat{\rho_{f_0}}|^2d\xi\lesssim
\left\|\nabla_x^kf_0\right\|^2.
$$
On the other hand, over $|\xi|\leq1$, one can let
$j>2\sigma_{k}$ so that
\begin{equation*}
\begin{split}
\int_{|\xi|<1}|\xi|^{2k}\mathscr{E}_l(t,\xi)d\xi\leq&
\int_{|\xi|<1}\left(1+\frac{\lambda
t}{j}\cdot\frac{|\xi|^2}{1+|\xi|^2}\right)^{-j}|\xi|^{2k}\mathscr{E}_{l+j\frac{\gamma+2s}{2\gamma}}(0,\xi)d\xi
\\
\lesssim&(1+t)^{-2\sigma_{k}}\left\|\mathscr{E}_{l+j\frac{\gamma+2s}{2\gamma}}(0,\xi)\right\|_{L^\infty_\xi}
\\
\lesssim&
(1+t)^{-2\sigma_{k}}\left\{\left\|w^{l+j\frac{\gamma+2s}{2\gamma}}f_0\right\|_{Z^1}^2
+\left\|(1+|x|)\rho_0\right\|^2_{L^1}\right\},
\end{split}
\end{equation*}
where the fact that
\begin{equation}
\label{est.rho}
\|\hat{E}_0\|_{L^\infty_\xi}=\left\|{|\xi|^{-1}}\widehat{\rho_{f_0}}\right\|_{L^\infty_\xi}\lesssim \left\|(1+|x|)\rho_{f_0}\right\|_{L^1}
\end{equation}
has been used. Then, the desired estimate \eqref{basic decay}
follows by taking $\ell_\ast=j\frac{\ga+2s}{2\ga}$ with $j>2\si_k$.
This completes the proof of Theorem \ref{basic decay lemma}.
\end{proof}

\section{Macroscopic dissipation}\label{sec.5}

This section is concerned with the analysis of the macroscopic dissipation basing on the linearized system \eqref{LB eqn} with the nonhomogeneous source
$h$ satisfying \eqref{nonh cons}, which will be applied in the next section to the energy estimates on the nonlinear Vlasov-Poisson-Boltzmann system
(\ref{perturbed eqn}) and (\ref{perturbed E}).

\begin{Lemma}\label{lem.sys.h}
Let $[f,E]$ be the solution to the Cauchy problem \eqref{LB eqn} with the
nonhomogenous term $h$ satisfying $\langle h,\sqrt{\mu}\rangle=0$. Let $K\geq 8$ be an integer. Then, there are two time-frequency interaction functionals $\mathscr{E}_{int}^{K}(t)$
and $\mathscr{E}_{int}^{K,h}(t)$
satisfying
\begin{eqnarray}\label{instant energy 2}
\begin{split}
|\mathscr{E}_{int}^{K}(t)|\lesssim & \sum\limits_{|\alpha|\leq K}\|\partial^{\alpha}
f\|^2+\sum\limits_{|\alpha|\leq K-1}\|\partial^{\alpha}
E\|^2,\\
|\mathscr{E}_{int}^{K,h}(t)|\lesssim & \sum\limits_{1\leq|\alpha|\leq K}\|\partial^{\alpha}{\bf P}
f\|^2+\sum\limits_{|\alpha|\leq K}\|\partial^{\alpha}({\bf I}-{\bf P})
f\|^2+\sum\limits_{|\alpha|\leq K-1}\|\partial^{\alpha}
E\|^2,
\end{split}
\end{eqnarray}
such that for any $t\geq0$,
\begin{equation}\label{mic-mac2}
\begin{split}
\partial_t\mathscr{E}_{int}^{K}(t)&+\lambda\sum\limits_{|\alpha|\leq K-1}\|\partial^{\alpha}\nabla_x
(a_{\pm},b,c)\|^2+\lambda\|(a_+-a_-)\|^2+\lambda\|E\|^2
\\ \lesssim& \sum\limits_{|\alpha|\leq K}
\left\|({\bf I-P})\partial^{\alpha}f\right\|^2_{N^{s}_{\gamma}}
+\sum\limits_{|\alpha|\leq
K-1}{\left\|\zeta(v)\partial^{\alpha}h\right\|^2},
\end{split}
\end{equation}
and
\begin{equation}\label{mic-mac3}
\begin{split}
\partial_t\mathscr{E}_{int}^{K,h}(t)&+\lambda\sum\limits_{1\leq|\alpha|\leq K-1}\|\partial^{\alpha}\nabla_x
(a_{\pm},b,c)\|^2+\lambda\|\nabla_x(a_+-a_-)\|^2+\lambda\sum\limits_{|\alpha|\leq K-1}\|\partial^\alpha E\|^2
\\ \lesssim& \sum\limits_{|\alpha|\leq K}
\left\|({\bf I-P})\partial^{\alpha}f\right\|^2_{N^{s}_{\gamma}}
+\sum\limits_{|\alpha|\leq
K-1}{\left\|\zeta(v)\partial^{\alpha}h\right\|^2},
\end{split}
\end{equation}
where $\zeta(v)$ is defined by \eqref{expo fun}.
\end{Lemma}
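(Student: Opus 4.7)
The plan is to construct $\mathscr{E}_{int}^{K}(t)$ and $\mathscr{E}_{int}^{K,h}(t)$ as sums of spatial derivatives (up to order $K-1$) of physical-side analogues of the Fourier-level functionals already built in Lemma \ref{lem.lide.f}(ii)--(iii), modified to absorb the new nonhomogeneous source $h$. The construction splits naturally into two parts: one yielding dissipation of $\nabla_x(a_\pm,b,c)$ from the symmetrized conservation laws \eqref{conservation abc} together with the symmetrized high-order moment equations following from \eqref{h momemt1}--\eqref{h momemt3}; the other yielding dissipation of $a_+-a_-$ and $E$ from \eqref{a-a}--\eqref{b-b} coupled with the Poisson relation $\nabla_x\cdot E=a_+-a_-$ in \eqref{ea}.

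The first step is to apply $\partial^\alpha$ with $|\alpha|\leq K-1$ to the above equations. To capture the dissipation of $\nabla_x c$, pair $\partial^\alpha\partial_j c$ with the evolution of $\partial^\alpha\Pi_j(\{{\bf I-P}\}f\cdot[1,1])$, transferring one time derivative via integration by parts in $t$; this yields $\lambda\|\partial^\alpha\nabla_x c\|^2$ up to error terms controlled by $\|({\bf I-P})\partial^\alpha f\|^2_{N^s_\gamma}$ (using that $L_\pm f$ and $v\cdot\nabla_x({\bf I-P})f$ only contribute microscopic pieces) and by $\|\zeta(v)\partial^\alpha h\|^2$ (since each moment of $h$ is tested against a rapidly decaying function). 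Next, pair $\partial_t\partial^\alpha\big[\tfrac12\Xi_{jk}(\{{\bf I-P}\}f\cdot[1,1])+2c\delta_{jk}\big]$ with $\partial^\alpha(\partial_jb_k+\partial_kb_j)$ to obtain $\lambda\|\partial^\alpha\nabla_x b\|^2$, after the $\nabla_x c$ term generated on the right is absorbed using the estimate already obtained. Finally, pair $\partial_t\partial^\alpha b$ with $\partial^\alpha\nabla_x(a_++a_-)/2$ via the second equation of \eqref{conservation abc} to close the dissipation of $\nabla_x(a_++a_-)$.

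For the electric-field dissipation, mirror Lemma \ref{lem.lide.f}(iii) on the physical side. Using \eqref{a-a}, \eqref{b-b} and \eqref{G}, form the cross-term functional $(\partial^\alpha G, \partial^\alpha\nabla_x(a_+-a_-))$ and $-\kappa_3(\partial^\alpha G, \partial^\alpha E)$; its time derivative, combined with \eqref{ea}, produces $\lambda\|\partial^\alpha(a_+-a_-)\|^2+\lambda\|\partial^\alpha E\|^2$, while the nonhomogeneous contribution $\langle h,[v\sqrt\mu,-v\sqrt\mu]\rangle$ is dominated by $\|\zeta(v)\partial^\alpha h\|$. Summing all cross terms over $|\alpha|\leq K-1$ with a hierarchy $0<\kappa_3\ll\kappa_2\ll\kappa_1\ll 1$ of small weights guarantees that spurious cross terms generated in Step~1 (and the coupling between the $(a_\pm,b,c)$ part and the $(a_+-a_-,E)$ part) are absorbed by the genuine dissipation. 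The resulting $\mathscr{E}_{int}^K(t)$ is, by construction, a finite combination of $L^2$ inner products between spatial derivatives of $f$, of macroscopic moments, and of $E$, so the bound \eqref{instant energy 2} follows from Cauchy--Schwarz. The high-order variant $\mathscr{E}_{int}^{K,h}(t)$ is obtained by the same recipe but summing only over $1\leq|\alpha|\leq K-1$ (and pairing the $G$-based cross term with $\partial^\alpha$ for $|\alpha|\geq 1$ as well), losing the $L^2$-level dissipation of the lowest macroscopic mode; the $({\bf I-P})f$ component still appears for all $|\alpha|\leq K$ on the right because the error terms generated by $L_\pm f$ and $v\cdot\nabla_x({\bf I-P})f$ include one extra space derivative.

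The main obstacle is ensuring that every time-derivative redistribution and every integration by parts produces only terms that are either of the desired dissipative type or are absorbed by $\|({\bf I-P})\partial^\alpha f\|^2_{N^s_\gamma}$ and $\|\zeta(v)\partial^\alpha h\|^2$. In particular, the local coercivity estimate \eqref{estimate on L2} controls moments of $Lf$ against rapidly decaying test functions but only delivers the anisotropic (and velocity-degenerate) $N^s_\gamma$ norm, so each estimate on $\langle Lf,\psi(v)\rangle$ with $\psi$ polynomial in $v$ multiplied by $\sqrt\mu$ must be routed through $\|({\bf I-P})\partial^\alpha f\|_{N^s_\gamma}$; this is the reason the right-hand sides of \eqref{mic-mac2} and \eqref{mic-mac3} range up to $|\alpha|\leq K$ even though the dissipation on the left only involves $|\alpha|\leq K-1$.
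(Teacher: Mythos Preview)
Your proposal is essentially correct and follows the paper's approach: build physical-space versions of the Fourier interaction functionals in Lemma~\ref{lem.lide.f}(ii)--(iii), sum them over $|\alpha|\le K-1$, and use the hierarchy $0<\kappa_2\ll\kappa_1$ together with a small multiple of the $(G,E)$ cross term to close. The paper does exactly this, citing \cite{Duan} for the $(a_++a_-,b,c)$ part and deriving the $(a_+-a_-,E)$ part from \eqref{a-a}--\eqref{b-b} and \eqref{ea}.

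There is one point where your construction of $\mathscr{E}_{int}^{K,h}(t)$ does not quite deliver what \eqref{mic-mac3} claims. You restrict \emph{both} $G$-based cross terms to $|\alpha|\ge 1$, but then you cannot recover $\|E\|^2$ in the dissipation: by Poisson, $\|E\|=\|\nabla_x\Delta_x^{-1}(a_+-a_-)\|$ is not controlled by $\|\nabla_x(a_+-a_-)\|$. The paper keeps the $(\partial^\alpha G,\partial^\alpha E)$ term for \emph{all} $|\alpha|\le K-1$, including $\alpha=0$; this is harmless for the bound \eqref{instant energy 2} because $G$ is purely microscopic, so $|(G,E)|\lesssim\|({\bf I-P})f\|^2+\|E\|^2$ involves no zero-order ${\bf P}f$. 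The estimate on $-\tfrac{d}{dt}(G,E)$ then yields $\|E\|^2$ at the cost of $\|\nabla_x(a_+-a_-)\|^2$, which is already available from the $|\alpha|=1$ case of the $(\partial^\alpha G,\nabla_x\partial^\alpha(a_+-a_-))$ estimate. Only the $(\partial^\alpha G,\nabla_x\partial^\alpha(a_+-a_-))$ term and the $\mathcal{E}_{int}^{(\alpha)}$ block are restricted to $1\le|\alpha|\le K-1$. With this correction your argument matches the paper's.
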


\begin{proof}
Take $\al$ with $|\al|\leq K$. By denoting
\begin{equation}\label{int fun3}
\begin{split}
\mathcal {E}^{(\alpha)}_{int}(t)=& \frac{1}{2}\left(\nabla_x\partial^{\alpha}c,\Pi(\partial^{\alpha}\{{\bf
I}-{\bf
P}\}f\cdot[1,1])\right)\\ &+\kappa_1\sum\limits_{j,k=1}^3
\left(\partial_j\partial^{\alpha}b_k+\partial_k\partial^{\alpha}b_j,\frac{1}{2}\Xi_{jk}(\{{\bf
I}-{\bf P}\}\partial^{\alpha}f\cdot[1,1])+2\partial^{\alpha}c\delta_{jk}\right)\\
&+\kappa_2
\left(\nabla_x\frac{\partial^{\alpha}a_++\partial^{\alpha}a_-}{2},\partial^{\alpha}b\right),
\end{split}
\end{equation}
one can use the same argument as in \cite[Lemma 4.1]{Duan} to obtain
\begin{equation}\label{int fun3 energy}
\frac{d}{dt}\mathcal
{E}^{(\alpha)}_{int}(t)+\lambda\|\partial^{\alpha}\nabla
((a_{+}+a_{-}),b,c)\|^2\lesssim
\sum\limits_{|\alpha'|\leq|\alpha|+1}\|\partial^{\alpha'}({\bf
I-P})f\|_{N^{s}_{\gamma}}^2+\left\|\zeta(v)\partial^{\alpha}h\right\|^2,
\end{equation}
and similarly, as for obtaining (iii) in Lemma \ref{lem.lide.f}, one has
\begin{eqnarray}
\frac{d}{dt}(\partial^{\alpha}G,\nabla_x\partial^{\alpha}(a_+-a_-))&+&\lambda\sum\limits_{|\alpha|\leq|\alpha'|\leq|\alpha|+1}\|\partial^{\alpha'}(a_+-a_-)\|^2\notag\\
&\lesssim&
\sum\limits_{|\alpha'|\leq|\alpha|+1}\|\partial^{\alpha'}({\bf
I-P})f\|_{N^{s}_{\gamma}}^2+\left\|\zeta(v)\partial^{\alpha}h\right\|^2.\label{int fun4 energy}
\end{eqnarray}
To include the zero-order term $\|E\|$  in the dissipation, from applying
$\partial^{\alpha}$ to (\ref{b-b}) and then taking the inner product
with $\partial^{\alpha}E$ over ${\bf R}^3\times{\bf R}^3$, we have
\begin{equation}\label{e energy}
\begin{split}
2\|\partial^{\alpha}E\|^2\leq&
(\partial_t\partial^{\alpha}G,\partial^{\alpha}E)
+|(\partial^{\alpha}\nabla_x\cdot \Xi(({\bf I-P})f\cdot
q_1),\partial^{\alpha}E)|+\left(\nabla_x\partial^{\alpha}(a_+-a_-),\partial^{\alpha}E\right)\\
& +|(\pa^\al \lag h-Lf, [v\sqrt{\mu},-v\sqrt{\mu}]\rag, \pa^\al E)|\\
\leq&\frac{d}{dt}(\partial^{\alpha}G,\partial^{\alpha}E)-(\partial^{\alpha}G,\partial_t\partial^{\alpha}E)+\epsilon\|\partial^{\alpha}E\|^2
\\ &+
\frac{C}{\epsilon}\left\{\|\partial^{\alpha}\nabla_x({\bf
I-P})f\|^2+\left\|\partial^{\alpha}\nabla_x(a_+-a_-)\right\|^2+\|\pa^\al
\lag h-Lf, [v\sqrt{\mu},-v\sqrt{\mu}]\rag\|^2\right\},
\end{split}
\end{equation}
where in the last inequality we have used integrations by part in
the $t$-variable and the Cauchy-Schwarz inequality with $\epsilon$.
Notice that from (\ref{a-a}) and (\ref{ea}),
\begin{equation}\label{e energy1}
-(\partial^{\alpha}G,\partial_t\partial^{\alpha}E)=
(\partial^{\alpha}G,\partial^{\alpha}\nabla_x\De_x^{-1}\nabla_x\cdot
G)\lesssim\|\partial^{\alpha}G\|^2\lesssim \|\partial^{\alpha}({\bf
I-P})f\|_{N^{s}_{\gamma}}^2.
\end{equation}
Therefore, (\ref{e energy}) together with (\ref{e energy1}) imply
\begin{equation}\label{e energy2}
\begin{split}
-\frac{d}{dt}(\partial^{\alpha}G,\partial^{\alpha}E)+\|\partial^{\alpha}E\|^2\lesssim
\sum\limits_{|\alpha'|\leq|\alpha|+1}\|\partial^{\alpha'}({\bf
I-P})f\|_{N^{s}_{\gamma}}^2+\left\|\partial^{\alpha}\nabla_x(a_+-a_-)\right\|^2+\left\|\zeta(v)\partial^{\alpha}h\right\|^2.
\end{split}
\end{equation}
Now, we define
\begin{equation*}
\begin{split}
\mathscr{E}_{int}^{K,h}(t)=& \frac{1}{2}\sum\limits_{1\leq|\alpha|\leq K-1}\left(\nabla_x\partial^{\alpha}c,\Pi(\partial^{\alpha}\{{\bf
I}-{\bf
P}\}f\cdot[1,1])\right)\\ &+\kappa_1\sum\limits_{1\leq|\alpha|\leq K-1}\sum\limits_{j,k=1}^3
\left(\partial_j\partial^{\alpha}b_k+\partial_k\partial^{\alpha}b_j,\frac{1}{2}\Xi_{jk}(\{{\bf
I}-{\bf P}\}\partial^{\alpha}f\cdot[1,1])+2\partial^{\alpha}c\delta_{jk}\right)\\
&+\kappa_2\sum\limits_{1\leq|\alpha|\leq K-1}
\left(\nabla_x\frac{\partial^{\alpha}a_++\partial^{\alpha}a_-}{2},\partial^{\alpha}b\right)
+{\sum\limits_{1\leq|\alpha|\leq K-1}}
(\partial^{\alpha}G,\nabla_x\partial^{\alpha}(a_+-a_-))\\
&-\kappa_6\sum\limits_{|\alpha|\leq
K-1}(\partial^{\alpha}G,\partial^{\alpha}E),
\end{split}
\end{equation*}
for a suitably chosen constant $0<\kappa_6\ll1$. It is
straightforward to see that the {second} equation of (\ref{instant
energy 2}) holds true and also  (\ref{mic-mac3}) follows by taking
the sum of $(\ref{int fun3 energy})$, $(\ref{int fun4 energy})$ and
$(\ref{e energy2})\times\kappa_6$. In a similar way, basing on the
obtained estimates, $\mathscr{E}_{int}^{K}(t)$ can be constructed to
satisfy both the {first} equation of (\ref{instant energy 2}) and
\eqref{mic-mac2}. This completes the proof of Lemma \ref{lem.sys.h}.
\end{proof}

\section{The a priori estimates}\label{sec.6}

In this section, we are going to deduce the uniform-in-time a priori estimates on the solution to the Cauchy problem \eqref{perturbed eqn}-\eqref{perturbed data} of the Vlasov-Poisson-Boltzmann
system. For this purpose, we define the following time-weighted energy norm $X(t)$ by
\begin{equation}\label{def.X}
\begin{split}
X(t)=\sup\limits_{0\leq \tau\leq t}\mathcal {E}_{l_0+l_1}(\tau)+\sup\limits_{0\leq\tau\leq t}(1+\tau)^{\frac{3}{2}}\mathcal {E}_{l_0}(\tau)+\sup\limits_{0\leq\tau\leq t}(1+\tau)^{\frac{3}{2}+p}\mathcal {E}^h_{l_0}(\tau),
\end{split}
\end{equation}
with all involved parameters fixed to satisfy
\begin{eqnarray}
&\dis -3<\ga<-2s,\ \frac{1}{2}\leq s<1,\ l_0 \geq 0,\ K\geq 8,\ \frac{1}{2}<p<1,\label{para1}
\end{eqnarray}
and
\begin{equation}\label{para2}
l_1=\frac{5}{4(1-p)}\frac{\gamma+2s}{\gamma},
\end{equation}
where the construction of the temporal energy functionals $\mathcal {E}_{l}(t)$ and $\mathcal {E}^h_{l}(t)$ will be given in the following two lemmas. Suppose that the Cauchy problem  \eqref{perturbed eqn}-\eqref{perturbed data}  admits a smooth solution $f(t,x,v)$
over $0\leq t\leq T$ for $0< T\leq\infty$, and also the solution $f(t,x,v)$ satisfies
\begin{equation}\label{a priori estimates}
\begin{split}
\sup\limits_{0\leq t\leq T}X(t)\leq  \de_0,
\end{split}
\end{equation}
where $\de_0>0$ is a suitably small constant.


\begin{Lemma}\label{lem.en.l}
For any $l$ with $0\leq l\leq l_0+l_1$, there is $\mathcal {E}_l(t)$ satisfying (\ref{def e}) such that
\begin{equation}\label{energy liyine}
\frac{d}{dt}\mathcal {E}_l(t)+\lambda \mathcal {D}_l(t)\lesssim \|\partial_t\phi\|_{L^\infty}\mathcal {E}_l(t)
\end{equation}
holds for any $0\leq t\leq T$, where $\mathcal {D}_l(t)$ is defined by (\ref{def d}).
\end{Lemma}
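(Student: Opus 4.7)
The plan is to construct $\mathcal{E}_l(t)$ as a suitable linear combination of weighted $L^2$-type norms of mixed derivatives $\partial_\beta^\alpha f$, augmented by Sobolev norms of $E$ and by the interactive functional $\mathscr{E}_{int}^K(t)$ from Lemma \ref{lem.sys.h} which produces macroscopic dissipation. The right-hand side $\|\partial_t\phi\|_{L^\infty}\mathcal{E}_l(t)$ will be generated from the $e^{\pm\phi}$-weight trick (cf.~\cite{Guo2,Guo5}) used to handle the troublesome transport term $\mp\tfrac{1}{2}v\cdot\nabla_x\phi\,f_\pm$ on the right of \eqref{perturbed eqn}.

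First I would carry out the pure spatial derivative estimate. For $|\alpha|\leq K$, $|\beta|=0$, apply $\partial^\alpha$ to \eqref{perturbed eqn} and take the inner product with $e^{\pm\phi}w_l^2(\alpha,0)\partial^\alpha f_\pm$. The key cancellation is that
\[
(e^{\pm\phi}\partial^\alpha\partial_tf_\pm + e^{\pm\phi}v\cdot\nabla_x\partial^\alpha f_\pm,\, w_l^2(\alpha,0)\partial^\alpha f_\pm) = \tfrac12\tfrac{d}{dt}\|e^{\pm\phi/2}w_l(\alpha,0)\partial^\alpha f_\pm\|^2 \mp \tfrac12(\partial_t\phi\,e^{\pm\phi}\partial^\alpha f_\pm,\,w_l^2(\alpha,0)\partial^\alpha f_\pm),
\]
plus commutator terms of order $|\alpha|$ coming from $[\partial^\alpha, v\cdot\nabla_x\phi]$ and $[\partial^\alpha,\nabla_x\phi\cdot\nabla_v]$. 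The second term above is exactly what produces the $\|\partial_t\phi\|_{L^\infty}\mathcal{E}_l$ contribution and can only be absorbed by the time-weighted norm $X(t)$ in the final energy inequality. The linearized operator $L$ yields dissipation $\|w_l(\alpha,0)\partial^\alpha f\|_{N^s_\gamma}^2$ (modulo $L^2(B_C)$) by Lemma \ref{estimate on L}(ii)-(iii). The $\alpha_1=0$ piece of the electric source $\nabla_x\phi\cdot v\sqrt{\mu}q_1$ and of $v\cdot\nabla_x\phi f_\pm$ cancel exactly by the construction of $e^{\pm\phi}$; the nontrivial commutators are controlled by Lemmas \ref{lem.vphi.x} and \ref{lem.phi.x}, and $\Gamma(f,f)$ is handled by \eqref{nonop6} in Lemma \ref{lem.non.ga}.

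For $|\alpha|+|\beta|\leq K$, $|\beta|\geq 1$, I would apply $\partial_\beta^\alpha(\mathbf{I-P})$ to \eqref{perturbed eqn} (so as to avoid having to control $\partial_\beta^\alpha\mathbf{P}f$ in the highly weighted norm) and test against $w_l^2(\alpha,\beta)\partial_\beta^\alpha(\mathbf{I-P})f$. The critical transport commutator
\[
(\partial_{\beta-e_i}^{\alpha+e_i}(\mathbf{I-P})f,\,w_l^2(\alpha,\beta)\partial_\beta^\alpha(\mathbf{I-P})f)
\]
is handled exactly by the $s\geq 1/2$ trick discussed after \eqref{intro.t1}: write $\partial_\beta^\alpha = \partial_{e_i}\partial_{\beta-e_i}^\alpha$, split $\partial_{e_i}=\partial_{e_i}^{1/2}\partial_{e_i}^{1/2}$, and use $w_l(\alpha,\beta)=w_l(\alpha,\beta-e_i)\langle v\rangle^\gamma$ to bring everything under the anisotropic dissipation $\|\cdot\|_{N^s_\gamma}^2$ via \eqref{diss.norm}. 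The nonlinear pieces $\nabla_x\phi\cdot\nabla_v f$ and $v\cdot\nabla_x\phi f$ are controlled by Lemmas \ref{lem.vphi.xv} and \ref{lem.phi.xv}, and the weighted estimate on $\partial_\beta^\alpha\Gamma(f,f)$ comes from \eqref{nonop5}. Additionally one needs the standard $L^2$-type estimate on $\partial^\alpha E$ from the Poisson equation; controlling $\sum_{|\alpha|\leq K}\|\partial^\alpha E\|^2$ in the instant energy uses $-\Delta\phi = \rho_f$ together with $\|\partial^\alpha E\|\lesssim \|\partial^\alpha(a_+-a_-)\|$ for $|\alpha|\geq 1$.

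Finally I would form $\mathcal{E}_l(t)$ as a large linear combination
\[
\mathcal{E}_l(t) = \sum_{|\alpha|\leq K}\|e^{\pm\phi/2}w_l(\alpha,0)\partial^\alpha f_\pm\|^2 + C_1\sum_{|\alpha|+|\beta|\leq K,\,|\beta|\geq 1}\|w_l(\alpha,\beta)\partial_\beta^\alpha(\mathbf{I-P})f\|^2 + C_2\,\mathscr{E}_{int}^K(t) + C_3\sum_{|\alpha|\leq K}\|\partial^\alpha E\|^2,
\]
with $1\ll C_1\ll C_2\ll C_3$ chosen so that $\mathscr{E}_{int}^K(t)$ contributes the macroscopic dissipation $\|\nabla_x(a_\pm,b,c)\|_{H^{K-1}}^2+\|a_+-a_-\|^2+\|E\|^2$ (cf.~\eqref{mic-mac2}) while the microscopic dissipation from Lemma \ref{estimate on L} dominates the remainders $\eta\sum\|w_l\partial_{\beta_1}g\|_{N^s_\gamma}^2$ and the $L^2(B_C)$ terms after summing in $|\beta|$ from small to large. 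Summing all the inequalities and invoking the a priori smallness assumption \eqref{a priori estimates} absorbs all nonlinear $\mathcal{E}_l^{1/2}\mathcal{D}_l$ contributions produced by Lemmas \ref{lem.non.ga}, \ref{lem.vphi.x}--\ref{lem.phi.xv} into the left-hand side $\lambda\mathcal{D}_l(t)$. The only term which cannot be absorbed into the dissipation is the $\mp\tfrac12\partial_t\phi$ factor from the $e^{\pm\phi}$-weight derivative, which is precisely the factor bounded by $\|\partial_t\phi\|_{L^\infty}\mathcal{E}_l(t)$.

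The principal obstacle will be the first group of weighted estimates where $|\beta|\geq 1$: because $w_l(\alpha,\beta)$ depends on the total order $|\alpha|+|\beta|$, transferring a velocity derivative to $x$ (as forced by the commutator in the transport term) produces a strictly larger weight $w_l(\alpha,\beta-e_i)$, and the extra weight factor $\langle v\rangle^\gamma$ must exactly match the deficit between the $L^2_{\gamma+2s}$ norm and the $H^s_\gamma$ norm appearing in $\|\cdot\|_{N^s_\gamma}$. Verifying that the relation $w_l(\alpha,\beta)=w_l(\alpha,\beta-e_i)\langle v\rangle^\gamma$ combined with $s\geq 1/2$ is both necessary and sufficient to close this step is the delicate point; it is precisely the place where the restriction $s\geq 1/2$ enters.
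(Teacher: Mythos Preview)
Your outline captures the overall architecture, but there are two concrete gaps that would prevent closure.

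First, you skip the paper's unweighted Step~1 and go straight to testing against $e^{\pm\phi}w_l^2(\alpha,0)\partial^\alpha f_\pm$. This loses the key identity: summing $\pm(\partial^{\alpha+e_i}\phi\,v_i\sqrt{\mu},\partial^\alpha f_\pm)$ over $\pm$ and using \eqref{a-a}, \eqref{ea} yields exactly $\tfrac12\tfrac{d}{dt}\|\partial^\alpha\nabla_x\phi\|^2$. Once the velocity weight $w_l^2(\alpha,0)$ is inserted the moment $\langle v_i\sqrt{\mu}w_l^2,\,f_+-f_-\rangle$ is no longer $G_i$, the identity fails, and the source term produces only an error $\lesssim\|\partial^{\alpha}\nabla_x\phi\|^2$ on the right. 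Your proposed remedy---adding $C_3\sum\|\partial^\alpha E\|^2$ and bounding it statically via Poisson---does not cover the $|\alpha|=0$ piece $\|E\|^2$, which is not controlled by $\|a_+-a_-\|_{L^2}$. Relatedly, the $e^{\pm\phi}$ trick cancels the transport term $v\cdot\nabla_x\partial^\alpha f_\pm$ against $\pm\tfrac12 v\cdot\nabla_x\phi\,\partial^\alpha f_\pm$, not against the linear source $\pm v\cdot\nabla_x\phi\,\sqrt{\mu}$ as you write.

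Second, doing the weighted estimate on the full $f$ at $|\alpha|=0$ does not close. Lemma~\ref{estimate on L}(ii) gives $(w_l^2(0,0)Lf,f)\gtrsim\|w_l(0,0)f\|_{N^s_\gamma}^2-C\|f\|_{L^2(B_C)}^2$, and the error term $\|f\|_{L^2(B_C)}^2$ contains $\|(a_++a_-,b,c)\|^2$, which is absent from $\mathcal{D}_l$. The paper avoids this by splitting: the zero-order weighted estimate is done only on $({\bf I-P})f$ (Step~2.1), and the weighted full-$f$ estimate is restricted to $1\leq|\alpha|\leq K$ (Step~2.2), where the $L^2(B_C)$ error is dominated by $\|\partial^\alpha{\bf P}f\|^2+\|\partial^\alpha({\bf I-P})f\|_{N^s_\gamma}^2\subset\mathcal{D}_l$. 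Finally, the interactive functional $\mathscr{E}_{int}^K$ must carry a \emph{small} coefficient $\kappa$ (see \eqref{instant energy 2}: it is only bounded in absolute value by the energy, not positive), so your ordering $1\ll C_1\ll C_2$ with $C_2$ on $\mathscr{E}_{int}^K$ is backwards and would destroy the equivalence~\eqref{def e}.
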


\begin{proof}
It is divided by three steps. Notice that
(\ref{perturbed eqn}) can be rewritten as
\begin{equation}\label{two VPBeqn}
\begin{split}
\big[\partial_{t}&+v_i\partial^{e_i}\mp\partial^{e_i}\phi\partial_{e_i}\big]f_{\pm}\pm\frac{1}{2}(\partial^{e_i}\phi v_i)f_{\pm}\pm(\partial^{e_i}\phi v_i)
\sqrt{\mu}
+L_{\pm}f=\Gamma_{\pm}(f,f).
\end{split}
\end{equation}

\noindent
{\bf Step 1.}  Energy estimates without any weight: Applying $\partial^\alpha$ with $|\alpha|\leq K$ to (\ref{two VPBeqn}) and  taking the inner product with $e^{\pm\phi}\partial^\alpha f_{\pm}$ over ${\bf R}^3\times{\bf R}^3$, one has
\begin{equation}\label{spatial inner withoutw}
\begin{split}
\left(\partial_{t}\partial^\alpha f_{\pm}, e^{\pm\phi}\partial^\alpha f_{\pm}\right)&+\left(v_i\partial^{e_i+\alpha}f_{\pm},e^{\pm\phi}\partial^\alpha f_{\pm}\right)
\pm\frac{1}{2}\left((\partial^{e_i}\phi v_i)\partial^{\alpha}f_{\pm},e^{\pm\phi}\partial^\alpha f_{\pm}\right)\\
&\pm\left((\partial^{e_i+\alpha}\phi v_i)
\sqrt{\mu},\partial^\alpha f_{\pm}\right)
+\left(L_{\pm}\partial^\alpha f,\partial^\alpha f_{\pm}\right)\\ =&
\mp\chi_{|\alpha|}\sum\limits_{1\leq|\alpha_1|\leq|\alpha|}C_\alpha^{\alpha_1}
\frac{1}{2}\left((\partial^{e_i+\alpha_1}\phi v_i)\partial^{\alpha-\alpha_1}f_{\pm},e^{\pm\phi}\partial^\alpha f_{\pm}\right)\\
&\pm\chi_{|\alpha|}\sum\limits_{1\leq|\alpha_1|\leq|\alpha|}C_\alpha^{\alpha_1}\left((\partial^{e_i+\alpha_1}\phi \partial_{e_i}^{\alpha-\alpha_1}f_{\pm},e^{\pm\phi}\partial^\alpha f_{\pm}\right)\\
&\mp\left((\partial^{e_i+\alpha}\phi v_i)
\sqrt{\mu},\left(1-e^{\pm\phi}\right)\partial^\alpha f_{\pm}\right)
\\
&+\left(L_{\pm}\partial^\alpha f,\left(1-e^{\pm\phi}\right)\partial^\alpha f_{\pm}\right)
+\left(\partial^\alpha\Gamma_{\pm}(f,f),e^{\pm\phi}\partial^\alpha f_{\pm}\right),
\end{split}
\end{equation}
where $\chi_{|\al|}=1$ if $|\al|>0$, and $\chi_{|\al|}=0$ if $|\al|=0$. Now  we sum the above equations with $\pm$ and estimate the resulting equations
term by term.

The first term on the left hand side of $(\ref{spatial inner withoutw})$ is just
$$
\frac{1}{2}\frac{d}{dt}\sum\limits_{\pm}\left\|e^{\frac{\pm\phi}{2}}\partial^\alpha f_{\pm}\right\|\mp\sum\limits_{\pm}\frac{1}{2}(\partial_t\phi\partial^\alpha f_{\pm},\partial^\alpha f_{\pm}).
$$
As in \cite{Guo5}, the extra factor $e^{\pm\phi}$ is designed to treat the second term. In fact, from integration by parts,
\begin{equation*}
\begin{split}
\left(v_i\partial^{e_i+\alpha}f_{\pm},e^{\pm\phi}\partial^\alpha f_{\pm}\right)
\pm\frac{1}{2}\left((\partial^{e_i}\phi v_i)\partial^{\alpha}f_{\pm},e^{\pm\phi}\partial^\alpha f_{\pm}\right)=0.
\end{split}
\end{equation*}
Recalling (\ref{poisson eqn}), (\ref{a-a}) and (\ref{ea}), one has
$$
\sum\limits_{\pm}\pm\left((\partial^{e_i+\alpha}\phi v_i)
\sqrt{\mu},\partial^\alpha f_{\pm}\right)=\frac{1}{2}\frac{d}{dt}\|\partial^\alpha\nabla_x\phi\|^2.
$$
Notice that by \eqref{estimate on L2},
$$
\sum\limits_{\pm}\left(L_{\pm}\partial^\alpha f,\partial^\alpha
f_{\pm}\right)\geq \la\|({\bf I}-{\bf P})\partial^\alpha
f\|^2_{N^{s}_{\gamma}},
$$
and by Lemma \ref{lem.vphi.x} and Lemma \ref{lem.phi.x},
the first and second terms on the right hand side of (\ref{spatial inner withoutw}) are bounded by
$C\sqrt{\mathcal {E}_l(t)}\mathcal {D}_l(t)$.

We denote the third term on the right hand side of (\ref{spatial inner withoutw}) as $J_5$.
Notice
\begin{equation}\label{epowerforce}
\sup_{x\in {\bf
R}^3}|1-e^{\pm\phi}|\lesssim\|\phi\|_{L^\infty}\lesssim
\|\nabla_x\phi\|_{H^1}\lesssim \de_0.
\end{equation}
For $\alpha=0$, by Cauchy-Schwarz and Sobolev inequalities,
$$
J_5\lesssim\|\nabla_x \phi\|^2\|\mu^{\frac{1}{4}}f\|_{H^2}\lesssim \sqrt{\mathcal {E}_l(t)}\mathcal {D}_l(t),
$$
while for $|\alpha|\geq1$,
$$
J_5\lesssim \|\nabla_x \phi\|_{H^2}\sum\limits_{1\leq|\alpha|\leq K}\left\{\|\partial^\alpha \nabla_x \phi\|^2\left\|\mu^{\frac{1}{4}}\partial^\alpha f\right\|\right\}\lesssim \sqrt{\mathcal {E}_l(t)}\mathcal {D}_l(t).
$$
For the fourth term on the right hand side of (\ref{spatial inner withoutw}), recalling (\ref{macro def}) and (\ref{Lop expr2}), and then applying Lemma \ref{lem.non.ga}, one has
\begin{equation}\label{Lup1}
\begin{split}
\left|\left(L_{\pm}\partial^\alpha f,\left(1-e^{\pm\phi}\right)\partial^\alpha f_{\pm}\right)\right|
=&\left|\left(L_{\pm}\partial^\alpha ({\bf I}_{\pm}-{\bf P}_{\pm})f,\left(1-e^{\pm\phi}\right)
\partial^\alpha f_{\pm}\right)\right|\\
\lesssim& \int_{{\bf R}^3}|\partial^\alpha({\bf I}-{\bf P})f|_{N^{s}_{\gamma}}|\partial^\alpha f|_{N^{s}_{\gamma}}|\nabla_x\phi|dx\\ \lesssim&
\|\nabla_x\phi\|_{H^2}\|\partial^\alpha({\bf I}-{\bf
P})f\|_{N^{s}_{\gamma}}\|\partial^\alpha f\|_{N^{s}_{\gamma}}\lesssim \sqrt{\mathcal {E}_l(t)}\mathcal {D}_l(t).
\end{split}
\end{equation}
For the fifth term on the right hand side of (\ref{spatial inner withoutw}), by Lemma \ref{lem.non.ga},
$$
\sum\limits_{\pm}\left(\partial^\alpha\Gamma_{\pm}(f,f),e^{\pm\phi}\partial^\alpha f_{\pm}\right)
\lesssim \sqrt{\mathcal {E}_l(t)}\mathcal {D}_l(t).
$$
Furthermore, one can apply the estimate \eqref{mic-mac2} in Lemma \ref{lem.sys.h}, where for the term involving
$h$ defined by (\ref{h VPB}),
through splitting $f$ into ${\bf P}f+({\bf I}-{\bf P})f$ and using Lemma \ref{estimates on nonop2}, it can be bounded as
\begin{equation}\label{hL2}
\sum\limits_{|\alpha|\leq
K-1}\left\|\zeta(v)\partial^{\alpha}h\right\|^2\lesssim \mathcal
{E}_l(t)\mathcal {D}_l(t).
\end{equation}
Therefore, by combing  \eqref{mic-mac2} and all the estimates above, we conclude in this step that
\begin{equation}\label{total spatial energy}
\begin{split}
\frac{d}{dt}&\left\{
\sum\limits_{|\alpha|\leq K}\sum\limits_{\pm}\left\|e^{\frac{\pm\phi}{2}}\partial^\alpha f_{\pm}\right\|^2
+\sum\limits_{|\alpha|\leq K}\|\partial^\alpha\nabla_x\phi\|^2+\kappa\mathscr{E}_{int}^{K}(t)
\right\}\\ &+\lambda\sum\limits_{|\alpha|\leq K-1}\|\partial^{\alpha}\nabla
(a_{\pm},b,c)\|^2+\lambda\|(a_+-a_-)\|^2+\lambda\|\nabla_x\phi\|^2+\lambda\sum\limits_{|\alpha|\leq K}\left\|({\bf I-P})\partial^{\alpha}f\right\|^2_{N^{s}_{\gamma}}\\
\lesssim& \|\partial_t\phi\|_{L^\infty}\sum\limits_{|\alpha|\leq K}\sum\limits_{\pm}\left\|\partial^\alpha f_{\pm}\right\|^2+\left(\sqrt{\mathcal {E}_l(t)}+\mathcal {E}_l(t)\right)\mathcal {D}_l(t),
\end{split}
\end{equation}
where the constant $\kappa>0$ is small enough.

\medskip
\noindent{\bf Step 2.} Energy estimates with weight function $w_l(\alpha,\beta)$:

\medskip
\noindent{\it Step 2.1} One can rewrite (\ref{two VPBeqn}) as
\begin{equation}\label{micro two VPBeqn}
\begin{split}
\big[\partial_{t}&+v_i\partial^{e_i}\mp\partial^{e_i}\phi\partial_{e_i}\big]({\bf I}_{\pm}-{\bf P}_{\pm})f
\pm\frac{1}{2}(\partial^{e_i}\phi v_i)({\bf I}_{\pm}-{\bf P}_{\pm})f+L_{\pm}({\bf I}_{\pm}-{\bf P}_{\pm})f\\ =&-\big[\partial_{t}+v_i\partial^{e_i}\mp\partial^{e_i}\phi\partial_{e_i}\big]{\bf P}_{\pm}f\mp\frac{1}{2}(\partial^{e_i}\phi v_i){\bf P}_{\pm}f\mp(\partial^{e_i}\phi v_i)
\sqrt{\mu}+\Gamma_{\pm}(f,f).
\end{split}
\end{equation}
By multiplying (\ref{micro two VPBeqn}) by $e^{\pm\phi}w^2_l(0,0)({\bf I}_{\pm}-{\bf P}_{\pm})f$ and taking the integration over
${\bf R}^3\times{\bf R}^3$, one has
\begin{equation}\label{micro two VPBeqn inner}
\begin{split}
\frac{1}{2}\sum\limits_{\pm}&\frac{d}{dt}\left\|e^{\frac{\pm\phi}{2}}w_l(0,0)({\bf I}_{\pm}-{\bf P}_{\pm})f\right\|^2
+\sum\limits_{\pm}\left(L_{\pm}({\bf I}_{\pm}-{\bf P}_{\pm})f,w^2_l(0,0)({\bf I}_{\pm}-{\bf P}_{\pm})f\right)
\\ &-\frac{1}{2}\sum\limits_{\pm}\left(\pm\partial_t\phi ({\bf I}_{\pm}-{\bf P}_{\pm})f,e^{\pm\phi}w^2_l(0,0)({\bf I}_{\pm}-{\bf P}_{\pm})f\right)\\
=&\sum\limits_{\pm}\left(L_{\pm}({\bf I}_{\pm}-{\bf P}_{\pm})f,\left(1-e^{\pm\phi}\right)w^2_l(0,0)({\bf I}_{\pm}-{\bf P}_{\pm})f\right)\\
&+\sum\limits_{\pm}\left(\pm\partial^{e_i}\phi \partial_{e_i}({\bf I}_{\pm}-{\bf P}_{\pm})f,e^{\pm\phi}w^2_l(0,0)({\bf I}_{\pm}-{\bf P}_{\pm})f\right)\\
&\mp\left(\frac{1}{2}(\partial^{e_i}\phi v_i){\bf P}_{\pm}f,e^{\pm\phi}w^2_l(0,0)({\bf I}_{\pm}-{\bf P}_{\pm})f\right)\\
&\mp\sum\limits_{\pm}\left((\partial^{e_i}\phi v_i)
\sqrt{\mu},e^{\pm\phi}w^2_l(0,0)({\bf I}_{\pm}-{\bf P}_{\pm})f\right)
\\
&-\sum\limits_{\pm}\left(\left[\partial_{t}+v_i\partial^{e_i}\mp\partial^{e_i}\phi\partial_{e_i}\right]{\bf P}_{\pm}f,e^{\pm\phi}w^2_l(0,0)({\bf I}_{\pm}-{\bf P}_{\pm})f\right)
\\ &+\sum\limits_{\pm}\left(\Gamma_{\pm}(f,f),e^{\pm\phi}w^2_l(0,0)({\bf I}_{\pm}-{\bf P}_{\pm})f\right),
\end{split}
\end{equation}
where we have used the identity
$$
\left(v_i\partial^{e_i}({\bf I}_{\pm}-{\bf P}_{\pm})f
\pm\frac{1}{2}(\partial^{e_i}\phi v_i)({\bf I}_{\pm}-{\bf P}_{\pm})f,e^{\pm\phi}w^2_l(0,0)({\bf I}_{\pm}-{\bf P}_{\pm})f\right)=0.
$$
Now we further estimate (\ref{micro two VPBeqn inner}) term by term. The third term on the left hand side of
(\ref{micro two VPBeqn inner}) is bounded by
$$
\|\partial_t\phi\|_{L^\infty}\left\|e^{\frac{\pm\phi}{2}}w_l(0,0)({\bf I}_{\pm}-{\bf P}_{\pm})f\right\|^2.
$$
For the left-hand second term, it follows from Lemma \ref{estimate on L} that
$$
\sum\limits_{\pm}\left(L_{\pm}({\bf I}_{\pm}-{\bf
P}_{\pm})f,w^2_l(0,0)({\bf I}_{\pm}-{\bf P}_{\pm})f\right)\geq
\la\left\|w_l(0,0)({\bf I}-{\bf
P})f\right\|^2_{N^{s}_{\gamma}}-C\|({\bf I}-{\bf P})f\|_{L^2(B_C)}.
$$
Similar to estimating (\ref{Lup1}), the first term on the right hand side of
(\ref{micro two VPBeqn inner}) is bounded by
$$
\|\nabla_x\phi\|_{H^2}\left\|w_l(0,0)({\bf I}-{\bf
P})f\right\|^2_{N^{s}_{\gamma}}.
$$
The right-hand second and third terms of (\ref{micro two VPBeqn
inner}) are bounded by $C\sqrt{\mathcal {E}_l(t)}\mathcal {D}_l(t)$,
where we have used the velocity integration by parts for the second
term and (\ref{macro def}) for the third term, as well as the a
priori assumption (\ref{a priori estimates}). From Cauchy-Schwarz
inequality with $\eta$ and the local conservation laws
{(\ref{conservation abc}) and (\ref{a-a})}, the right-hand
fourth and fifth terms are bounded by
\begin{equation*}
\begin{split}
\eta&\left\|w_l(0,0)({\bf I}-{\bf P})f\right\|^2_{N^{s}_{\gamma}}\\ &+C\left\{\|\nabla_x\phi\|^2+\|\nabla_x(a_{\pm},b,c)\|^2
+\sum\limits_{|\alpha|\leq1}\|({\bf I}-{\bf
P})f\|^2_{N^{s}_{\gamma}}+\mathcal {E}_l(t)\mathcal {D}_l(t)\right\}.
\end{split}
\end{equation*}
For the nonlinear term $\Ga_\pm$, Lemma \ref{lem.non.ga} implies
$$
\sum\limits_{\pm}\left(\Gamma_{\pm}(f,f),e^{\pm\phi}w^2_l(0,0)({\bf I}_{\pm}-{\bf P}_{\pm})f\right)\lesssim
\sqrt{\mathcal {E}_l(t)}\mathcal {D}_l(t).
$$
Plugging all the above estimates into (\ref{micro two VPBeqn inner}) and fixing a properly small
constant $\eta>0$ yield
\begin{equation}\label{micro zeao energy}
\begin{split}
\sum\limits_{\pm}&\frac{d}{dt}\left\|e^{\frac{\pm\phi}{2}}w_l(0,0)({\bf I}_{\pm}-{\bf P}_{\pm})f\right\|^2
+\lambda\left\|w_l(0,0)({\bf I}-{\bf P})f\right\|^2_{N^{s}_{\gamma}}\\
\lesssim& \|\partial_t\phi\|_{L^\infty}\sum\limits_{\pm}\left\|e^{\frac{\pm\phi}{2}}w_l(0,0)({\bf I}_{\pm}-{\bf P}_{\pm})f\right\|^2\\
&+\|\nabla_x\phi\|^2+\|\nabla_x(a_{\pm},b,c)\|^2
+\sum\limits_{|\alpha|\leq1}\|\pa^\al ({\bf I}-{\bf
P})f\|^2_{N^{s}_{\gamma}}+\left\{\sqrt{\mathcal
{E}_l(t)}+\mathcal {E}_l(t)\right\}\mathcal {D}_l(t).
\end{split}
\end{equation}

\noindent{\it Step 2.2.} For the weighted estimate on the spatial derivatives, we start with
(\ref{two VPBeqn}). In fact, take $1\leq|\alpha|\leq N$. By applying $\partial^\alpha$ to (\ref{two VPBeqn})
and taking the inner product with $e^{\pm\phi}w_l^2(\alpha,0)\partial^\alpha f_{\pm}$ over ${\bf R}^3\times{\bf R}^3$, one has
\begin{equation}\label{spatial inner withw}
\begin{split}
\frac{1}{2}\frac{d}{dt}&\sum\limits_{\pm}\left\|e^{\frac{\pm\phi}{2}}w_l(\alpha,0)\partial^\alpha
f_{\pm}\right\| +\la\left\|w_l(\alpha,0)\partial^\alpha
f\right\|_{N^{s}_{\gamma}}^2
-C\|\partial^\alpha f\|_{L^2_{B_C}}^2\\ \lesssim&
\|\partial_t\phi\|_{L^\infty}\sum\limits_{\pm}\left\|e^{\frac{\pm\phi}{2}}w_l(\alpha,0)\partial^\alpha f_{\pm}\right\|^2
+\sum\limits_{\pm}\sum\limits_{1\leq|\alpha_1|\leq|\alpha|}\frac{1}{2}
C_\alpha^{\alpha_1}\left(\mp(\partial^{e_i+\alpha_1}\phi v_i)\partial^{\alpha-\alpha_1}f_{\pm},e^{\pm\phi}w^2_l(\alpha,0)\partial^\alpha f_{\pm}\right)\\
&+\sum\limits_{\pm}\sum\limits_{|\alpha_1|\leq|\alpha|}C_\alpha^{\alpha_1}\left(\pm(\partial^{e_i+\alpha_1}\phi
\partial_{e_i}^{\alpha-\alpha_1}f_{\pm},e^{\pm\phi}w^2_l(\alpha,0)\partial^\alpha
f_{\pm}\right)+\sum\limits_{\pm}\left(\mp(\partial^{e_i+\alpha}\phi
v_i) \sqrt{\mu},e^{\pm\phi}w^2_l(\alpha,0)\partial^\alpha
f_{\pm}\right)
\\
&+\sum\limits_{\pm}\left(L_{\pm}\partial^\alpha f,\left(1-e^{\pm\phi}\right)w^2_l(\alpha,0)\partial^\alpha f_{\pm}\right)
+\left(\partial^\alpha\Gamma_{\pm}(f,f),e^{\pm\phi}w^2_l(\alpha,0)\partial^\alpha f_{\pm}\right),
\end{split}
\end{equation}
where we have used Lemma \ref{lem.non.ga} and the fact that
$$
\left(v_i\partial^{e_i+\alpha}f_{\pm},e^{\pm\phi}w^2_l(\alpha,0)\partial^\alpha f_{\pm}\right)
\pm\frac{1}{2}\left((\partial^{e_i}\phi v_i)\partial^{\alpha}f_{\pm},e^{\pm\phi}w^2_l(\alpha,0)\partial^\alpha f_{\pm}\right)=0.
$$
From Lemma \ref{lem.vphi.x} and Lemma \ref{lem.phi.x}, the right-hand second and third terms of (\ref{spatial inner withw}) are bounded by $\sqrt{\mathcal {E}_l(t)}\mathcal {D}_l(t)$. For the right-hand fourth term of (\ref{spatial inner withw}), by Cauchy-Schwarz inequality with $\eta$, one has
$$
\sum\limits_{\pm}\left((\partial^{e_i+\alpha}\phi v_i)
\sqrt{\mu},e^{\pm\phi}w^2_l(\alpha,0)\partial^\alpha
f_{\pm}\right)\lesssim \eta\left\|w_l(\alpha,0)\partial^\alpha
f\right\|_{N^{s}_{\gamma}}^2+C\left\|\partial^{e_i+\alpha}\phi\right\|^2.
$$
From Lemma \ref{lem.non.ga}, the right-hand fifth and sixth terms of (\ref{spatial inner withw}) are also
bounded by $\sqrt{\mathcal {E}_l(t)}\mathcal {D}_l(t)$.
Putting those estimates into (\ref{spatial inner withw}), taking summation over $1\leq|\alpha|\leq K$ and fixing
a small constant $\eta>0$ give
\begin{equation}\label{spatial inner withw total}
\begin{split}
\frac{d}{dt}\sum\limits_{\pm}&\sum\limits_{1\leq|\alpha|\leq K}\left\|e^{\frac{\pm\phi}{2}}w_l(\alpha,0)\partial^\alpha f_{\pm}\right\|^2
+\lambda\sum\limits_{1\leq|\alpha|\leq K}\left\|w_l(\alpha,0)\partial^\alpha f\right\|^2_{N^{s}_{\gamma}}\\
\lesssim& \|\partial_t\phi\|_{L^\infty}\sum\limits_{1\leq|\alpha|\leq K}\sum\limits_{\pm}\left\|e^{\frac{\pm\phi}{2}}w_l(\alpha,0)\partial^\alpha f_{\pm}\right\|^2+\sum\limits_{1\leq|\alpha|\leq K}\|\nabla_x\partial^\alpha\phi\|^2\\
&+\sum\limits_{1\leq|\alpha|\leq K}\|\partial^\alpha f\|^2_{N^{s}_{\gamma}}
+\sqrt{\mathcal {E}_l(t)}\mathcal {D}_l(t).
\end{split}
\end{equation}

\medskip
\noindent{\it Step 2.3.} For the weighted estimate on the mixed derivatives, we start with (\ref{micro two VPBeqn}).
Let $1\leq m\leq K$. By applying $\partial_\beta^\alpha$ with $|\beta|=m$ and
$|\alpha|+|\beta|\leq K$ to (\ref{micro two VPBeqn}), taking the inner product with $e^{\pm\phi}w_l^2(\alpha,\beta)\partial_\beta^\alpha({\bf I}_{\pm}-{\bf P}_{\pm})f$ over ${\bf R}^3\times{\bf R}^3$,
one obtains
\begin{equation}\label{micro two VPBeqn xvder inner}
\begin{split}
\frac{1}{2}\sum\limits_{\pm}&\frac{d}{dt}\left\|e^{\frac{\pm\phi}{2}}w_l(\alpha,\beta)\partial_\beta^\alpha({\bf I}_{\pm}-{\bf P}_{\pm})f\right\|^2
+\underbrace{\sum\limits_{\pm}\left(\partial_\beta^\alpha L_{\pm}({\bf I}_{\pm}-{\bf P}_{\pm})f,w^2_l(\alpha,\beta)\partial_\beta^\alpha({\bf I}_{\pm}-{\bf P}_{\pm})f\right)}_{J_6}
\\
&\underbrace{-\frac{1}{2}\sum\limits_{\pm}\left(\pm\partial_t\phi \partial_\beta^\alpha({\bf I}_{\pm}-{\bf P}_{\pm})f,e^{\pm\phi}w^2_l(\alpha,\beta)\partial_\beta^\alpha({\bf I}_{\pm}-{\bf P}_{\pm})f\right)}_{J_7}\\
\pagebreak=&\underbrace{\sum\limits_{\pm}\left(\partial_\beta^\alpha L_{\pm}({\bf I}_{\pm}-{\bf P}_{\pm})f,\left(1-e^{\pm\phi}\right)w^2_l(\alpha,\beta)\partial_\beta^\alpha({\bf I}_{\pm}-{\bf P}_{\pm})f\right)}_{J_8}+\sum_{m=9}^{18} J_m,
\end{split}
\end{equation}
with
\begin{eqnarray*}
J_9 & = &
\sum\limits_{\pm}\sum\limits_{|\alpha_1|\leq|\alpha|}C_\alpha^{\alpha_1}
\left(\pm\partial^{\alpha_1+e_i}\phi
\partial_{\beta+e_i}^{\alpha-\alpha_1} ({\bf I}_{\pm}-{\bf
P}_{\pm})f,e^{\pm\phi}w^2_l(\alpha,\beta)
\partial_\beta^\alpha({\bf I}_{\pm}-{\bf P}_{\pm})f\right), \\
J_{10} & = & \sum\limits_{\pm}\sum\limits_{|\alpha_1|\leq|\alpha|}
C_\alpha^{\alpha_1}\left(\pm\partial^{\alpha_1+e_i}\phi
\partial_{\beta+e_i}^{\alpha-\alpha_1}{\bf P}_{\pm}f,e^{\pm\phi}w^2_l(\alpha,\beta)\partial_\beta^\alpha({\bf I}_{\pm}-{\bf P}_{\pm})f\right), \\
J_{11} &=& -\sum\limits_{\pm}\left(\pm\frac{1}{2}(\partial^{e_i}\phi v_i)\partial_\beta^\alpha{\bf P}_{\pm}f,e^{\pm\phi}w^2_l(\alpha,\beta)
\partial_\beta^\alpha({\bf I}_{\pm}-{\bf P}_{\pm})f\right),\\
J_{12} &=&
-\sum\limits_{\pm}\sum\limits_{|\alpha_1|+|\beta_1|\geq1\atop{|\beta_1|\leq1}}
C_{\alpha,\beta}^{\alpha_1,\beta_1}\left(\pm\frac{1}{2}(\partial^{\alpha_1+e_i}\phi
\partial_{\beta_1}v_i)\partial_{\beta-\beta_1}^{\alpha-\alpha_1}({\bf
I}_{\pm}-{\bf P}_{\pm})f,e^{\pm\phi}w^2_l(\alpha,\beta)
\partial_\beta^\alpha({\bf I}_{\pm}-{\bf P}_{\pm})f\right),
\end{eqnarray*}
and
\begin{eqnarray*}
J_{13}& = &
-\sum\limits_{\pm}\sum\limits_{|\alpha_1|+|\beta_1|\geq1\atop{|\beta_1|\leq1}}
C_{\alpha,\beta}^{\alpha_1,\beta_1}\left(\pm\frac{1}{2}(\partial^{\alpha_1+e_i}\phi
\partial_{\beta_1}v_i)\partial_{\beta-\beta_1}^{\alpha-\alpha_1}{\bf
P}_{\pm}f,e^{\pm\phi}w^2_l(\alpha,\beta)
\partial_\beta^\alpha({\bf I}_{\pm}-{\bf P}_{\pm})f\right), \\
J_{14} & = &
-\sum\limits_{\pm}\sum\limits_{|\alpha_1|\leq|\alpha|}C_\alpha^{\alpha_1}\left(\pm(\partial^{\alpha_1+e_i}\phi
v_i)
\sqrt{\mu},e^{\pm\phi}w^2_l(\alpha,\beta)\partial_\beta^\alpha({\bf I}_{\pm}-{\bf P}_{\pm})f\right), \\
J_{15} &=&-\sum\limits_{\pm}\left(\left[\partial_{t}+v_i\partial^{e_i}\right]\partial_\beta^\alpha{\bf P}_{\pm}f,e^{\pm\phi}w^2_l(\alpha,\beta)\partial_\beta^\alpha({\bf I}_{\pm}-{\bf P}_{\pm})f\right),
\end{eqnarray*}
and
\begin{eqnarray*}
J_{16} & = &  -\sum\limits_{\pm}{C_{\beta}^{e_i}}\left(\partial_{\beta-e_i}^{\alpha+e_i} ({\bf I}_{\pm}-{\bf P}_{\pm})f,e^{\pm\phi}w^2_l(\alpha,\beta)\partial_\beta^\alpha({\bf I}_{\pm}-{\bf P}_{\pm})f\right), \\
J_{17} & = & -\sum\limits_{\pm}{C_{\beta}^{e_i}}\left(\partial_{\beta-e_i}^{\alpha+e_i} {\bf P}_{\pm}f,e^{\pm\phi}w^2_l(\alpha,\beta)\partial_\beta^\alpha({\bf I}_{\pm}-{\bf P}_{\pm})f\right), \\
J_{18} &=&
\sum\limits_{\pm}\left(\partial_\beta^\alpha\Gamma_{\pm}(f,f),e^{\pm\phi}
w^2_l(\alpha,\beta)\partial_\beta^\alpha({\bf I}_{\pm}-{\bf
P}_{\pm})f\right),
\end{eqnarray*}
where as before we have used
$$
\left(v_i\partial_\beta^{\alpha+e_i}({\bf I}_{\pm}-{\bf P}_{\pm})f\pm\frac{1}{2}(\partial^{e_i}\phi v_i)\partial_\beta^\alpha({\bf I}_{\pm}-{\bf P}_{\pm})f,e^{\pm\phi}w^2_l(\alpha,\beta)
\partial_\beta^\alpha({\bf I}_{\pm}-{\bf P}_{\pm})f\right)=0.
$$
Now we turn to estimate $J_{i}$ $(6\leq i\leq 18)$ term by term.
Lemma \ref{estimate on L} yields
$$
J_6\gtrsim \left\|w_l(\alpha,\beta)\partial_\beta^\alpha({\bf
I}-{\bf P})f\right\|^2_{N^{s}_{\gamma}}
-\eta\sum\limits_{\beta_1\leq
\beta}\left\|w^{l}\partial^{\alpha}_{\beta_1}({\bf I}-{\bf
P})f\right\|^2_{N^{s}_{\gamma}}
-C_{\eta}\left\|\partial^{\alpha}({\bf I}-{\bf
P})f\right\|^2_{L^2(B_C)}.
$$
It is straightforward to see that $J_7$ is bounded by
$$
C\|\partial_t\phi\|_{L^\infty}\left\|w_l(\alpha,\beta)\partial_\beta^\alpha({\bf I}-{\bf P})f\right\|^2.
$$
From Lemma \ref{lem.non.ga}, $J_8$ and $J_{18}$ have the same upper bound $\sqrt{\mathcal {E}_l(t)}\mathcal {D}_l(t)$, and so do $J_9$ and $J_{12}$ by Lemma \ref{lem.phi.xv} and Lemma \ref{lem.vphi.xv}. Moreover,
since there is the exponential decay in $v$ in the terms $J_{10}$, $J_{11}$ and $J_{13}$, one can verify that they all have the upper bound $ \sqrt{\mathcal {E}_l(t)}\mathcal {D}_l(t)$.

Next, by Cauchy-Schwartz's inequality with $\eta$ and local
conservation laws {\eqref{conservation abc} and \eqref{a-a}}, we
obtain
$$
J_{14}\lesssim C\sum\limits_{|\alpha|\leq
K-1}\|\partial^\alpha\nabla_x\phi\|^2+\eta\left\|w_l(\alpha,\beta)\partial_\beta^\alpha({\bf
I}-{\bf P})f\right\|^2_{N^{s}_{\gamma}},
$$
\begin{equation*}
\begin{split}
J_{15}\lesssim&\eta\left\|w_l(\alpha,\beta)\partial_\beta^\alpha({\bf I}-{\bf P})f\right\|^2_{N^{s}_{\gamma}}\\ &+C\left\{\sum\limits_{|\alpha|\leq K-1}\|\partial^\alpha\nabla_x\phi\|^2+\sum\limits_{|\alpha|\leq K-1}\|\partial^\alpha\nabla_x(a_{\pm},b,c)\|^2
+\sum\limits_{|\alpha|\leq K}\|\partial^\alpha({\bf I}-{\bf
P})f\|^2_{N^{s}_{\gamma}}+\mathcal {E}_l(t)\mathcal {D}_l(t)\right\},
\end{split}
\end{equation*}
$$
J_{17}\lesssim C\sum\limits_{|\alpha|\leq
K-1}\|\partial^\alpha\nabla_x(a_{\pm},b,c)\|^2+\eta\left\|w_l(\alpha,\beta)\partial_\beta^\alpha({\bf
I}-{\bf P})f\right\|^2_{N^{s}_{\gamma}}.
$$
For the remaining term $J_{16}$, since $|\beta|\geq1$, by writing
\begin{equation*}
\begin{split}
&\left(\partial_{\beta-e_i}^{\alpha+e_i} ({\bf I}_{\pm}-{\bf P}_{\pm})f,e^{\pm\phi}w^2_l(\alpha,\beta)\partial_\beta^\alpha({\bf I}_{\pm}-{\bf P}_{\pm})f\right)\\
=&\left(\partial_{\beta-e_i}^{\alpha+e_i} ({\bf I}_{\pm}-{\bf P}_{\pm})f,e^{\pm\phi}w^2_l(\alpha,\beta)\partial_{e_i}\partial_{\beta-e_i}^\alpha({\bf I}_{\pm}-{\bf P}_{\pm})f\right),
\end{split}
\end{equation*}
and then doing the similar calculations as $J_{4,2}$, one obtains
$$
J_{16}\lesssim {\sum\limits_{|\alpha'|+|\beta'|\leq
K\atop{|\beta'|=|\beta|-1}}
\left\|w_l(\alpha',\beta')\partial_{\beta'}^{\alpha'}({\bf I}-{\bf
P})f\right\|^2_{N^{s}_{\gamma}}}.
$$
Therefore, by plugging all the estimates above into (\ref{micro two VPBeqn xvder inner}), taking the summation
over $\{|\beta|=m,|\alpha|+|\beta|\leq K\}$ for each given $1\leq m\leq K$, and then taking the proper linear combination
of those $K$ estimates with properly chosen constants $C_m>0$ $(1\leq m\leq K)$ and $\eta$ small enough, one has
\begin{equation}\label{micro two VPBeqn xvder inner total}
\begin{split}
\frac{d}{dt}\sum\limits_{m=1}^K&C_m\sum\limits_{|\beta|=m,|\alpha|+|\beta|\leq K}
\sum\limits_{\pm}\left\|e^{\frac{\pm\phi}{2}}w_l(\alpha,\beta)\partial_\beta^\alpha({\bf I}_{\pm}-{\bf P}_{\pm})f\right\|^2\\
&+\lambda\sum\limits_{|\beta|=m,|\alpha|+|\beta|\leq K}\left\|w_l(\alpha,\beta)\partial_\beta^\alpha({\bf I}-{\bf P})f\right\|^2_{N^{s}_{\gamma}}\\
\lesssim&|\partial_t\phi|\sum\limits_{|\beta|=m,|\alpha|+|\beta|\leq K}\left\|w_l(\alpha,\beta)\partial_\beta^\alpha({\bf I}-{\bf P})f\right\|^2+C\sum\limits_{|\alpha|\leq K}\left\|w_l(\alpha,0)\partial^{\alpha}({\bf I}-{\bf P})f\right\|^2_{N^{s}_{\gamma}}\\
&+C\sum\limits_{|\alpha|\leq K-1}\|\partial^\alpha\nabla_x(a_{\pm},b,c)\|^2
+C\sum\limits_{|\alpha|\leq K-1}\|\partial^\alpha\nabla_x\phi\|^2
+C\left\{\sqrt{\mathcal {E}_l(t)}+\mathcal {E}_l(t)\right\}\mathcal {D}_l(t).
\end{split}
\end{equation}

\medskip
\noindent{\bf Step 3.}  We are in a position to prove (\ref{energy liyine}) by taking the proper linear combination of those estimates obtained in the previous two steps as follows. The combination
$C_2\times [C_1\times(\ref{total spatial energy})+(\ref{micro zeao energy})+(\ref{spatial inner withw total})]+
(\ref{micro two VPBeqn xvder inner total})$ for $C_1>0$ and $C_2>0$ large enough gives
\begin{equation}\label{lem.energy.pf}
\frac{d}{dt}\mathcal {E}_l(t)+\lambda \mathcal {D}_l(t)\lesssim \|\partial_t\phi\|_{L^\infty}\mathcal {E}_l(t)+\left\{\sqrt{\mathcal {E}_l(t)}+\mathcal {E}_l(t)\right\}\mathcal {D}_l(t),
\end{equation}
where $\mathcal {E}_l(t)$ is given by
\begin{equation*}
\begin{split}
\mathcal {E}_l(t)=& C_2\Bigg[C_1\Bigg\{\sum\limits_{|\alpha|\leq K}\sum\limits_{\pm}\left\|e^{\frac{\pm\phi}{2}}\partial^\alpha f_{\pm}\right\|^2
+\sum\limits_{|\alpha|\leq K}\|\partial^\alpha\nabla_x\phi\|^2+\kappa \mathscr{E}_{int}^{K}(t)\Bigg\}\\ &+
\left\|e^{\frac{\pm\phi}{2}}w_l(0,0)({\bf I}_{\pm}-{\bf P}_{\pm})f\right\|^2+\sum\limits_{1\leq|\alpha|\leq K}\left\|e^{\frac{\pm\phi}{2}}w_l(\alpha,0)\partial^\alpha f_{\pm}\right\|^2\\ &+
\sum\limits_{m=1}^KC_m\sum\limits_{|\beta|=m,|\alpha|+|\beta|\leq K}
\sum\limits_{\pm}\left\|e^{\frac{\pm\phi}{2}}w_l(\alpha,\beta)\partial_\beta^\alpha({\bf I}_{\pm}-{\bf P}_{\pm})f\right\|^2\Bigg].
\end{split}
\end{equation*}
Noticing (\ref{int fun3}), it is easy to see
$$
\mathcal {E}_l(t)\sim \sum\limits_{|\alpha|\leq
K}\|\partial^{\alpha}\nabla_x\phi\|^2+\sum\limits_{|\alpha|\leq
K}\|\partial^{\alpha}{\bf P}f\|^2+\sum\limits_{|\alpha|+|\beta|\leq
K}\left\|w_l(\alpha,\beta)\partial_{\beta}^{\alpha}({\bf I-P})f\right\|^{2}.
$$
Recalling the a priori assumption (\ref{a priori estimates}), the desired estimate \eqref{energy liyine} follows directly by \eqref{lem.energy.pf}.
This completes the proof of Lemma \ref{lem.en.l}.
\end{proof}


\begin{Lemma}\label{lem.ener.hf}
For any $l$ with $0\leq l\leq l_0$, there is $\mathcal {E}^h_l(t)$ satisfying (\ref{def he}) such that
\begin{equation}\label{high energy liyine}
\frac{d}{dt}\mathcal {E}^h_l(t)+\lambda \mathcal {D}_l(t)\lesssim \|\nabla_x(a_{\pm},b,c)(t)\|^2+\|\partial_t\phi\|_{L^\infty}\mathcal {E}^h_l(t)
\end{equation}
holds for any $0\leq t\leq T$, where $\mathcal {D}_l(t)$ is defined by (\ref{def d}).
\end{Lemma}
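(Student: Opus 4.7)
The plan is to mimic the three-step argument in the proof of Lemma~\ref{lem.en.l}, with two essential adjustments: restrict all spatial inner products to multi-indices $1 \leq |\alpha| \leq K$, so that the $\alpha = 0$ macroscopic contribution drops out of the energy, and use the high-order macroscopic inequality (\ref{mic-mac3}) in place of (\ref{mic-mac2}).

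For the unweighted estimate (corresponding to Step~1 of Lemma~\ref{lem.en.l}), I will apply $\partial^\alpha$ with $1 \leq |\alpha| \leq K$ to (\ref{two VPBeqn}), pair with $e^{\pm \phi}\partial^\alpha f_\pm$, and sum over $\pm$. The linear cancellation, the force-field control from Lemmas~\ref{lem.vphi.x} and \ref{lem.phi.x}, and the nonlinear bound of Lemma~\ref{lem.non.ga} carry over verbatim. Adding the resulting identity to a small multiple of (\ref{mic-mac3}), whose $h$-source is handled exactly as in (\ref{hL2}), produces the high-order analogue of (\ref{total spatial energy}) in which the macroscopic dissipation reads
\begin{equation*}
\sum\limits_{1 \leq |\alpha| \leq K-1}\|\partial^\alpha \nabla_x(a_\pm,b,c)\|^2 + \|\nabla_x(a_+-a_-)\|^2 + \sum\limits_{|\alpha|\leq K-1}\|\partial^\alpha E\|^2.
\end{equation*}
The new feature relative to Lemma~\ref{lem.en.l} is the absence of the first-order piece $\|\nabla_x(a_\pm,b,c)\|^2$, since (\ref{mic-mac3}) provides no zero-order macroscopic dissipation.

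The three weighted inequalities (\ref{micro zeao energy}), (\ref{spatial inner withw total}), and (\ref{micro two VPBeqn xvder inner total}) from Steps~2.1--2.3 of Lemma~\ref{lem.en.l} will be used without modification. Forming the combination $C_2'\bigl[C_1'\cdot(\textrm{Step 1}) + (\ref{micro zeao energy}) + (\ref{spatial inner withw total})\bigr] + (\ref{micro two VPBeqn xvder inner total})$ with $C_1', C_2' \gg 1$, every source term $\|\partial^\alpha \nabla_x(a_\pm, b, c)\|^2$ with $|\alpha| \geq 1$ coming from the weighted steps is absorbed by the Step~1 dissipation; the $\|\nabla_x\phi\|^2$ and higher-order $\|\partial^\alpha \nabla_x \phi\|^2$ sources are absorbed via the Poisson relation $-\Delta\phi = a_+-a_-$ together with $\|\nabla_x(a_+-a_-)\|^2 + \sum_{|\alpha|\leq K-1}\|\partial^\alpha E\|^2$. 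The lone zero-order macroscopic source $\|\nabla_x(a_\pm, b, c)\|^2$ appearing on the right of (\ref{micro zeao energy}) cannot be absorbed and will survive as the advertised source on the right-hand side of (\ref{high energy liyine}). The factor $\|\partial_t\phi\|_{L^\infty}\mathcal{E}_l^h(t)$ arises exactly as in Lemma~\ref{lem.en.l}.

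The main technical watchpoint is the bookkeeping: one has to choose $C_1', C_2'$ large enough that every $\|\partial^\alpha \nabla_x(a_\pm, b, c)\|^2$ with $|\alpha| \geq 1$ produced by the weighted steps is strictly dominated by the Step~1 dissipation, leaving only the first-order term unpaid. Equivalence of the resulting functional with (\ref{def he}) will then be immediate, since the combination retains the full weighted $({\bf I-P})f$ energy, the spatial-derivative energies for $1 \leq |\alpha| \leq K$, and (through the macroscopic interaction functional $\mathscr{E}_{int}^{K,h}(t)$ and the Poisson equation) the correct $\|\partial^\alpha E\|^2$ content.
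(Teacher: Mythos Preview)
Your plan has a genuine gap. The combination you propose never produces the zero-order microscopic dissipation $\|({\bf I-P})f\|^2_{N^s_\gamma}$, yet this term appears \emph{on the right-hand side} of two of the building blocks you use. First, your Step~1 estimate (the analogue of (\ref{total spatial energy}) built from $1\leq|\alpha|\leq K$ together with (\ref{mic-mac3})) carries $\|({\bf I-P})f\|^2_{N^s_\gamma}$ on the right: inequality (\ref{mic-mac3}) is bounded above by $\sum_{|\alpha|\leq K}\|\partial^\alpha({\bf I-P})f\|^2_{N^s_\gamma}$, and after multiplying by $\kappa$ only the pieces with $|\alpha|\geq 1$ are absorbed by your spatial-derivative dissipation; the $|\alpha|=0$ piece survives. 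Second, the weighted zero-order estimate (\ref{micro zeao energy}) has $\sum_{|\alpha|\leq 1}\|\partial^\alpha({\bf I-P})f\|^2_{N^s_\gamma}$ on the right, and again only the $|\alpha|=1$ part is absorbed by your Step~1. You cannot absorb the remaining $\|({\bf I-P})f\|^2_{N^s_\gamma}$ into the left side of (\ref{micro zeao energy}) itself, because the constant there comes from the large $C$ in Lemma~\ref{estimate on L}~(ii) (the $\|g\|^2_{L^2(B_C)}$ error), which is not small relative to the coercivity constant $\lambda$.

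The paper closes this gap by deriving one additional, \emph{unweighted} zero-order microscopic estimate: pairing (\ref{micro two VPBeqn}) with $e^{\pm\phi}({\bf I}_\pm-{\bf P}_\pm)f$ (no weight $w_l$) yields $\frac{d}{dt}\bigl\{\sum_\pm\|e^{\pm\phi/2}({\bf I}_\pm-{\bf P}_\pm)f\|^2+\|\nabla_x\phi\|^2\bigr\}+\lambda\|({\bf I-P})f\|^2_{N^s_\gamma}\lesssim \|\nabla_x(a_\pm,b,c)\|^2+\cdots$. Here the coercivity (\ref{estimate on L2}) applies with no compact error, so the dissipation $\lambda\|({\bf I-P})f\|^2_{N^s_\gamma}$ is clean and can be multiplied by a large constant to absorb the stray zero-order terms above. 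The only unabsorbable source that remains is $\|\nabla_x(a_\pm,b,c)\|^2$, which is exactly the advertised right-hand side of (\ref{high energy liyine}). This extra estimate also supplies $\|\nabla_x\phi\|^2=\|E\|^2$ to the energy functional, which you need for the equivalence with (\ref{def he}); your claim that this comes ``through $\mathscr{E}_{int}^{K,h}(t)$'' does not work, since (\ref{instant energy 2}) is only an upper bound and $\mathscr{E}_{int}^{K,h}$ enters with a small coefficient.
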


\begin{proof}
By letting $|\alpha|\geq1$ in (\ref{spatial inner withoutw}),
repeating those computations in (\ref{spatial inner
withoutw})-(\ref{hL2}) and combing (\ref{mic-mac3}), one can instead
obtain
\begin{equation}\label{total high spatial energy}
\begin{split}
\frac{d}{dt}&\left\{\sum\limits_{1\leq|\alpha|\leq K}\sum\limits_{\pm}\left\|e^{\frac{\pm\phi}{2}}\partial^\alpha f_{\pm}\right\|^2
+\sum\limits_{1\leq|\alpha|\leq K}\|\partial^\alpha\nabla_x\phi\|^2+\kappa \mathscr{E}_{int}^{K,h}(t)
\right\}\\ &+\lambda\sum\limits_{1\leq|\alpha|\leq K-1}\|\partial^{\alpha}\nabla
(a_{\pm},b,c)\|^2+\lambda\|\nabla_x(a_+-a_-)\|^2\\ &+\lambda\|\nabla^2_x\phi\|^2+\lambda\|\nabla_x\phi\|^2
+\lambda\sum\limits_{1\leq|\alpha|\leq K}\left\|({\bf I-P})\partial^{\alpha}f\right\|^2_{N^{s}_{\gamma}}\\
\lesssim& \sum\limits_{1\leq|\alpha|\leq K}\sum\limits_{\pm}
\|\partial_t\phi\|_{L^\infty}\left\|\partial^\alpha
f_{\pm}\right\|^2 +{\left\|({\bf
I-P})f\right\|^2_{N^{s}_{\gamma}}}+\left(\sqrt{\mathcal
{E}_l(t)}+\mathcal {E}_l(t)\right)\mathcal {D}_l(t).
\end{split}
\end{equation}
Moreover, taking the inner product of (\ref{micro two VPBeqn}) with $e^{\pm\phi}({\bf I}_{\pm}-{\bf P}_{\pm})f$
over ${\bf R}^3\times {\bf R}^3$ gives
\begin{equation}\label{zero micro diss}
\begin{split}
\frac{1}{2}\sum\limits_{\pm}&\frac{d}{dt}\left\|e^{\frac{\pm\phi}{2}}({\bf I}_{\pm}-{\bf P}_{\pm})f\right\|^2
+\sum\limits_{\pm}\left(L_{\pm}({\bf I}_{\pm}-{\bf P}_{\pm})f,({\bf I}_{\pm}-{\bf P}_{\pm})f\right)
\\ &+\sum\limits_{\pm}\left(\pm(\partial^{e_i}\phi v_i)
\sqrt{\mu},({\bf I}_{\pm}-{\bf P}_{\pm})f\right)
-\frac{1}{2}\sum\limits_{\pm}\left(\pm\partial_t\phi ({\bf I}_{\pm}-{\bf P}_{\pm})f,e^{\pm\phi}
({\bf I}_{\pm}-{\bf P}_{\pm})f\right)\\
=&\sum\limits_{\pm}\left(L_{\pm}({\bf I}_{\pm}-{\bf P}_{\pm})f,
\left(1-e^{\pm\phi}\right)({\bf I}_{\pm}-{\bf P}_{\pm})f\right)
-\sum\limits_{\pm}\left(\frac{1}{2}\pm(\partial^{e_i}\phi v_i){\bf
P}_{\pm}f,e^{\pm\phi}({\bf I}_{\pm}-{\bf
P}_{\pm})f\right)\\
&-\sum\limits_{\pm}\left(\pm(\partial^{e_i}\phi v_i)
\sqrt{\mu},\left(e^{\pm\phi}-1\right)({\bf I}_{\pm}-{\bf
P}_{\pm})f\right) -\sum\limits_{\pm}\left(\partial_{t}{\bf
P}_{\pm}f,e^{\pm\phi}({\bf I}_{\pm}-{\bf P}_{\pm})f\right)
\\
&-\sum\limits_{\pm}\left(v_i\partial^{e_i}{\bf
P}_{\pm}f,e^{\pm\phi}({\bf I}_{\pm}-{\bf P}_{\pm})f\right)
-\sum\limits_{\pm}\left(\mp\partial^{e_i}\phi\partial_{e_i}{\bf
P}_{\pm}f,e^{\pm\phi}({\bf I}_{\pm}-{\bf P}_{\pm})f\right)
\\
&+\sum\limits_{\pm}\left(\Gamma_{\pm}(f,f),e^{\pm\phi}({\bf
I}_{\pm}-{\bf P}_{\pm})f\right).
\end{split}
\end{equation}
Now we turn to further estimate (\ref{zero micro diss})
term by term. In light of (\ref{a-a}), (\ref{G}) and (\ref{ea}),
$$
\sum\limits_{\pm}\left(\pm(\partial^{e_i}\phi v_i)
\sqrt{\mu},({\bf I}_{\pm}-{\bf P}_{\pm})f\right)=-(\phi,\nabla_xG)=-(\phi,\partial_t\Delta\phi)=\frac{d}{dt}\|\nabla_x\phi\|^2.
$$
It is also straightforward to get from Lemma \ref{estimate on L} that
$$
\sum\limits_{\pm}\left(L_{\pm}({\bf I}_{\pm}-{\bf P}_{\pm})f,({\bf
I}_{\pm}-{\bf P}_{\pm})f\right)\geq \lambda\left\|({\bf I}-{\bf
P})f\right\|^2_{N^{s}_{\gamma}}.
$$
The first term on the right-hand side of (\ref{zero micro
diss}) can be bounded by $\sqrt{\mathcal {E}_l(t)}\mathcal {D}_l(t)$
according to the estimate (\ref{Lup1}). The second, third, sixth and
seventh terms on the right-hand side of (\ref{zero micro diss}) are
also dominated by $\sqrt{\mathcal {E}_l(t)}\mathcal {D}_l(t)$, where
we have used Sobolev's inequality for the second, third and sixth
terms and Lemma \ref{estimates on nonop} for the seventh term. For
the fourth term, by the definitions of ${\bf P}$ and ${\bf I-P}$, 
$$
\sum\limits_{\pm}\left(\partial_{t}{\bf P}_{\pm}f,e^{\pm\phi}({\bf
I}_{\pm}-{\bf
P}_{\pm})f\right)=\sum\limits_{\pm}\left(\partial_{t}{\bf
P}_{\pm}f,\left[e^{\pm\phi}-1\right]({\bf I}_{\pm}-{\bf
P}_{\pm})f\right).
$$
Recalling (\ref{epowerforce}) and the local conservation laws
(\ref{conservation abc}), (\ref{a-a}), and applying Sobolev's
inequality as well as Cauchy-Schwarz's inequality with $\lambda$, it follows that
$$
\begin{array}{rlll}
\begin{split}
\left|\sum\limits_{\pm}\left(\partial_{t}{\bf
P}_{\pm}f,e^{\pm\phi}({\bf I}_{\pm}-{\bf
P}_{\pm})f\right)\right|\lesssim&\frac{\lambda}{4}\left\|({\bf
I}-{\bf P})f\right\|^2_{N^{s}_{\gamma}}+
\left[\|\nabla_x(a_{\pm},b,c)\|^2+\|\nabla_x\phi\|^2\right]
\|\nabla_x\phi\|^2_{H^1}\\[2mm]&+\|\nabla_x\phi\|^2_{H^1}\left\|\nabla_x({\bf
I}-{\bf P})f\right\|^2_{N^{s}_{\gamma}}+\mathcal {E}_l(t)\mathcal
{D}_l(t)\\[2mm]
\lesssim&\frac{\lambda}{4}\left\|({\bf
I}-{\bf P})f\right\|^2_{N^{s}_{\gamma}}+\mathcal {E}_l(t)\mathcal
{D}_l(t).
\end{split}
\end{array}
$$
For the remaining fifth term, by Cauchy-Schwarz's inequality
with $\lambda$, 
$$
\left|\sum\limits_{\pm}\left(v_i\partial^{e_i}{\bf
P}_{\pm}f,e^{\pm\phi}({\bf I}_{\pm}-{\bf
P}_{\pm})f\right)\right|\lesssim \frac{\lambda}{4}\left\|({\bf
I}-{\bf P})f\right\|^2_{N^{s}_{\gamma}}+\|\nabla_x(a_{\pm},b,c)\|^2.
$$
Therefore, combing all the estimates above, one has
\begin{equation}\label{micro zeao energy withoutw}
\begin{split}
\frac{d}{dt}&\left\{\sum\limits_{\pm}\left\|e^{\frac{\pm\phi}{2}}({\bf I}_{\pm}-{\bf P}_{\pm})f\right\|^2+\left\|\nabla_x\phi\right\|^2\right\}
+\lambda\left\|({\bf I}-{\bf P})f\right\|^2_{N^{s}_{\gamma}}\\
\lesssim& \|\partial_t\phi\|_{L^\infty}\sum\limits_{\pm}
\left\|e^{\frac{\pm\phi}{2}}({\bf I}_{\pm}-{\bf P}_{\pm})f\right\|^2
{+\|\nabla_x(a_{\pm},b,c)\|^2}+\left\{\sqrt{\mathcal
{E}_l(t)}+\mathcal {E}_l(t)\right\}\mathcal {D}_l(t).
\end{split}
\end{equation}
As in step 3 in the proof of Lemma \ref{lem.en.l}, a suitably linear combination of (\ref{total high spatial energy}),
(\ref{micro zeao energy withoutw}), (\ref{micro zeao energy}), (\ref{spatial inner withw total}) and
(\ref{micro two VPBeqn xvder inner total}) yields
\begin{equation*}
\frac{d}{dt}\mathcal {E}^h_l(t)+\lambda \mathcal {D}_l(t)\lesssim \|\nabla_x(a_{\pm},b,c)\|^2+\|\partial_t\phi\|_{L^\infty}\mathcal {E}^h_l(t)+\left\{\sqrt{\mathcal {E}_l(t)}+\mathcal {E}_l(t)\right\}\mathcal {D}_l(t),
\end{equation*}
which hence implies the desired estimate  (\ref{high energy liyine})  under the a priori assumption (\ref{a priori estimates}). This completes the proof of Lemma \ref{lem.ener.hf}.
\end{proof}

\section{Proof of global existence}\label{sec.7}

In this section we prove Theorem \ref{thm.gl} through obtaining the closed uniform-in-time estimates on $X(t)$ in the following lemma. Recall \eqref{def.X} for the definition of $X(t)$ with \eqref{para1}-\eqref{para2} for the choice of the corresponding parameters.


\begin{Lemma}\label{lem.X}
Consider the Cauchy problem \eqref{perturbed eqn}-\eqref{perturbed data}.  Assume that
\begin{equation*}
\int_{{\bf R}^3}\rho_{f_0}\,dx=0,\ \ \int_{{\bf R}^3}(1+|x|)|\rho_{f_0}|\,dx<\infty,
\end{equation*}
where
$\rho_{f_0}=\int_{{\bf R}^3}(f_{0,+}-f_{0,-})\sqrt{\mu}\,dv$.
Define $\eps_0$ by
\begin{equation*}
\epsilon_{0}=\sqrt{\mathcal{E}_{l_0+l_1}(0)}+\left\|w^{l_2}f_0\right\|_{Z_1}+\|(1+|x|)\rho_{f_0}\|_{L^1},
\end{equation*}
where $l_2>\frac{5(\ga+2s)}{4\ga}$ is a constant. Then, under the a priori assumption (\ref{a priori estimates}) for $\de_0>0$ suitably small,  one has
\begin{equation}\label{close energy}
X(t)\lesssim \epsilon^2_0+X^{\frac{3}{2}}(t)+X^2(t),
\end{equation}
for any $0\leq t\leq T$.
\end{Lemma}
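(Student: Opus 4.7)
The plan is to close the time-weighted norm $X(t)$ by estimating each of its three summands separately, combining the energy inequalities in Lemmas \ref{lem.en.l}--\ref{lem.ener.hf} with the linear time-decay of Theorem \ref{basic decay lemma} through Duhamel's principle. I begin with the top-order weighted piece $\sup_\tau\mathcal{E}_{l_0+l_1}(\tau)$. Applying Lemma \ref{lem.en.l} at level $l=l_0+l_1$ and discarding the dissipation yields $\tfrac{d}{dt}\mathcal{E}_{l_0+l_1}\lesssim\|\partial_t\phi\|_{L^\infty}\mathcal{E}_{l_0+l_1}$. By Sobolev embedding and the Poisson equation, $\|\partial_t\phi\|_{L^\infty}\lesssim(\mathcal{E}^h_{l_0})^{1/2}$, which by the definition of $X(t)$ decays like $(1+\tau)^{-3/4-p/2}X^{1/2}(t)$. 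Since $p>1/2$ makes this exponent exceed $1$, the integral $\int_0^t\|\partial_t\phi\|_{L^\infty}d\tau$ is bounded by $X^{1/2}(t)$, and Gronwall delivers $\mathcal{E}_{l_0+l_1}(t)\lesssim\eps_0^2\exp(CX^{1/2}(t))\lesssim\eps_0^2+X^{3/2}(t)$ under the a priori smallness.

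Next, for the $(1+t)^{-3/2}$ decay of $\mathcal{E}_{l_0}$ I would use the mild formulation $f(t)=S(t)f_0+\int_0^tS(t-\tau)h(\tau)d\tau$ with the nonlinear source $h$ from \eqref{h VPB}, applying Theorem \ref{basic decay lemma} at $k=0$ with $l_\ast$ slightly above $3(\gamma+2s)/(4\gamma)$ (admissible since $l_2>5(\gamma+2s)/(4\gamma)$). The source is controlled in $Z_1\cap L^2$ by Lemma \ref{lem.non.z1} together with the weighted nonlinear bounds in Lemmas \ref{lem.vphi.x}--\ref{lem.phi.xv}, giving $\|w^{l_0+l_\ast}h(\tau)\|_{Z_1\cap L^2}\lesssim\mathcal{E}_{l_0+l_1}(\tau)$. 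Splitting the Duhamel integral at $\tau=t/2$ and inserting the bootstrap bounds produces $\|w^{l_0}f(t)\|+\|E(t)\|\lesssim(1+t)^{-3/4}(\eps_0+X^{3/4}(t)+X(t))$. To upgrade this to the full norm $\mathcal{E}_{l_0}$, which contains every $\partial^\alpha_\beta$ derivative, I would multiply Lemma \ref{lem.en.l} at $l=l_0$ by $(1+t)^{3/2}$ and integrate in time; the error term $\int_0^t(1+\tau)^{1/2}\mathcal{E}_{l_0}d\tau$ is absorbed into $\int_0^t(1+\tau)^{3/2}\mathcal{D}_{l_0}d\tau$ via the weighted interpolation
\begin{equation*}
\bigl\|w_{l_0}(\alpha,\beta)\partial^\alpha_\beta({\bf I-P})f\bigr\|^{2}_{L^2}\lesssim \bigl\|w_{l_0}(\alpha,\beta)\partial^\alpha_\beta({\bf I-P})f\bigr\|^{2a}_{N^s_\gamma}\bigl\|w_{l_0+l_1}(\alpha,\beta)\partial^\alpha_\beta({\bf I-P})f\bigr\|^{2(1-a)}_{L^2},
\end{equation*}
with $a=\tfrac{5}{7-2p}$; the residual hydrodynamic and field contributions are captured directly by the Duhamel bound. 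The prescription $l_1=\tfrac{5}{4(1-p)}\cdot\tfrac{\gamma+2s}{\gamma}$ is precisely what makes this interpolation exponent align with the linear time-decay rate.

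Finally, for the $(1+t)^{-3/2-p}$ decay of $\mathcal{E}^h_{l_0}$, the same recipe is applied to Lemma \ref{lem.ener.hf}: multiply by $(1+t)^{3/2+p}$ and integrate. The decisive term on the right is $\int_0^t(1+\tau)^{3/2+p}\|\nabla_x(a_\pm,b,c)\|^2d\tau$, for which I would invoke Theorem \ref{basic decay lemma} at $k=1$, giving the refined rate $(1+\tau)^{-5/4}$ whose square leaves an integrand of size $(1+\tau)^{p-1}$, integrable exactly because $p<1$. The principal obstacle throughout is the interpolation in step two: reconciling the weak $N^s_\gamma$-dissipation of the very-soft-potential regime with the full $L^2$-energy requires a top weight that is large enough (hence $l_1$), admissible under the initial-data hypothesis, and compatible with the $Z_1\cap L^2$ decay of the source---propagating the nonlinear estimates for $\Gamma(f,f)$, $\nabla_x\phi\cdot\nabla_vf$, and $v\cdot\nabla_x\phi f$ through the Duhamel formula while respecting these constraints at both $k=0$ and $k=1$ is the technical crux, and it is precisely the reason the restrictions $1/2<p<1$ and $s\geq 1/2$ appear.
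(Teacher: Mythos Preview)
Your Step 1 matches the paper, and your interpolation with $a=\tfrac{5}{7-2p}$ is the correct algebraic content behind the choice of $l_1$. The gap is in how you extract the decay of $\mathcal{E}^h_{l_0}$. After multiplying \eqref{high energy liyine} by $(1+t)^{3/2+p}$ and integrating, the term $\int_0^t(1+\tau)^{3/2+p}\|\nabla_x(a_\pm,b,c)\|^2\,d\tau$, with $\|\nabla_x(a_\pm,b,c)\|^2\lesssim(1+\tau)^{-5/2}$ from Duhamel, leaves an integrand $(1+\tau)^{p-1}$. But $\int_0^t(1+\tau)^{p-1}\,d\tau\sim(1+t)^p$ \emph{grows}; you would only conclude $\mathcal{E}^h_{l_0}(t)\lesssim(1+t)^{-3/2}$, losing the crucial extra factor $(1+t)^{-p}$ and hence failing to close $X(t)$. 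The same borderline arithmetic obstructs the commutator term $\int_0^t(1+\tau)^{1/2+p}\mathcal{E}^h_{l_0}\,d\tau$ (bootstrap gives an integrand $(1+\tau)^{-1}$, hence a logarithm) and your Step~2 scheme for $\mathcal{E}_{l_0}$ through the zero-order macroscopic piece $\|{\bf P}f\|^2$, which is absent from $\mathcal{D}_{l_0}$.

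The paper circumvents this with the Strain--Guo velocity splitting. Set $\mathbf{E}(t)=\{\langle v\rangle^{-\gamma-2s}\leq t^{1-p}\}$; on $\mathbf{E}(t)$ the $N^s_\gamma$-dissipation dominates $t^{p-1}$ times the restricted energy, while on $\mathbf{E}^c(t)$ the extra weight purchases $\mathcal{E}^{h,\mathrm{high}}_{l_0}(t)\lesssim t^{-5/2}\mathcal{E}_{l_0+l_1}(t)$ --- this, rather than an interpolation inequality, is where $l_1=\tfrac{5}{4(1-p)}\tfrac{\gamma+2s}{\gamma}$ actually enters. One obtains $\frac{d}{dt}\mathcal{E}^h_{l_0}+\lambda t^{p-1}\mathcal{E}^h_{l_0}\lesssim\|\nabla_x{\bf P}f\|^2+\ldots$, and the stretched-exponential integrating factor $e^{\lambda t^p}$ yields $\int_0^te^{-\lambda(t^p-\tau^p)}(1+\tau)^{-5/2}\,d\tau\lesssim(1+t)^{-3/2-p}$, recovering the full $(1+t)^{-p}$ that a polynomial time weight cannot. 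The bound on $\mathcal{E}_{l_0}$ is then obtained not by time-weighting \eqref{energy liyine} at all, but via the decomposition $\mathcal{E}_{l_0}\sim\|{\bf P}f\|^2+\|\nabla_x\phi\|^2+\mathcal{E}^h_{l_0}$ together with Duhamel at $k=0$. (A smaller point: the Duhamel source must be bounded by the \emph{decaying} $\mathcal{E}_{l_0}$, not by $\mathcal{E}_{l_0+l_1}$; this forces $l_\ast\leq K-5+\tfrac{\gamma+2s}{2\gamma}$ as in the paper.)
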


\begin{proof}
We divide it by three steps.

\medskip
\noindent{\bf Step 1.} It follows from (\ref{a-a}) and (\ref{ea}) that
\begin{equation*}
\pa_t \phi=-\De_x^{-1} \pa_t (a_+-a_-)=\De_x^{-1}\na_x\cdot G,
\end{equation*}
where $G$ is given by \eqref{G}. Then, by Sobolev and Riesz inequalities,
\begin{equation}
\label{lem.X.p1}
\|\partial_t\phi\|^2_{L^\infty}\lesssim\|\nabla_x\partial_t\phi\|\cdot \|\nabla^2_x\partial_t\phi\|
\lesssim \|G\|\cdot \|\nabla_xG\|\lesssim \CE_{l_0}^{h}(t)\lesssim (1+t)^{-\frac{3}{2}-p}X(t).
\end{equation}
Here notice $3/2+p>2$ since $p>1/2$. By applying Lemma \ref{lem.en.l} in the case when $l=l_0+l_1$, one has from \eqref{energy liyine} that
\begin{equation*}
\frac{d}{dt}\mathcal {E}_{l_0+l_1}(t)+\lambda \mathcal {D}_{l_0+l_1}(t)\lesssim \|\partial_t\phi\|_{L^\infty}\mathcal {E}_{l_0+l_1}(t),
\end{equation*}
which together with \eqref{lem.X.p1} and Gronwall's inequality as well as \eqref{a priori estimates}, imply
\begin{equation}
\label{lem.X.p2}
\dis\mathcal {E}_{l_0+l_1}(t)\lesssim \mathcal {E}_{l_0+l_1}(0) e^{C\int_0^t \|\partial_t\phi(\tau)\|_{L^\infty}\,d\tau}\lesssim \mathcal {E}_{l_0+l_1}(0) \lesssim \eps_0^2
\end{equation}
for any $0\leq t\leq T$.

\medskip
\noindent{\bf Step 2.} We begin with \eqref{high energy liyine} with $l=l_0$ in Lemma \ref{lem.ener.hf}, i.e.
\begin{equation}
\label{lem.X.p3}
\frac{d}{dt}\mathcal {E}^h_{l_0}(t)+\lambda \mathcal {D}_{l_0}(t)\lesssim \|\nabla_x(a_{\pm},b,c)(t)\|^2+\|\partial_t\phi\|_{L^\infty}\mathcal {E}^h_{l_0}(t).
\end{equation}
As in \cite{SG1, SG2},  at time $t$, we split the velocity space ${\bf R}_v^3$ into
\begin{equation}\label{Esplit}
 {\mathbf{E}(t)}=\left\{\langle v\rangle^{-\gamma-2s}\leq t^{1-p}\right\},\ \  {\mathbf{E}^c(t)}=\left\{\langle v\rangle^{-\gamma-2s}> t^{1-p}\right\},
\end{equation}
where recall $\ga+2s<0$ and $1/2<p<1$. Corresponding to this splitting, we define
$\mathcal {E}_{l_0}^{h,low}(t)$ to be the restriction of $\mathcal {E}^h_{l_0}(t)$ to  {$\mathbf{E}(t)$} and
similarly $\mathcal {E}_{l_0}^{h,high}(t)$ to be the restriction of $\mathcal {E}^h_{l_0}(t)$ to
 {$\mathbf{E}^c(t)$}. Then, due to (\ref{Esplit}) and (\ref{def he}),
it follows from (\ref{lem.X.p3})  that
\begin{equation*}
\frac{d}{dt}\mathcal {E}^h_{l_0}(t)+\lambda t^{p-1}\mathcal {E}^h_{l_0}(t)\lesssim\|\nabla_x{\bf P} f\|^2+\|\partial_t\phi\|_{L^\infty}\mathcal {E}^h_{l_0}(t)
+t^{p-1}\mathcal {E}_{l_0}^{h,high}(t),
\end{equation*}
which by solving the ODE inequality, gives
\begin{equation}\label{Ehhlinte}
\mathcal {E}^h_{l_0}(t)\lesssim e^{-\lambda t^p}\mathcal {E}^h_{l_0}(0)+\int_0^td\tau\, e^{-\lambda(t^p-\tau^p)}\left(\|\nabla_x{\bf P} f(\tau)\|^2
+\|\partial_t\phi\|_{L^\infty}\mathcal {E}^h_{l_0}(\tau)+\tau^{p-1}\mathcal {E}_{l_0}^{h,high}(\tau)\right).
\end{equation}
In what follows we estimate those three terms in the time integral on the right-hand side of \eqref{Ehhlinte}. First of all, by \eqref{lem.X.p1},
\begin{equation}
\label{lem.X.p4}
\|\partial_t\phi\|_{L^\infty}\mathcal {E}^h_{l_0}(t)\lesssim [\mathcal {E}^h_{l_0}(t)]^{\frac{3}{2}}\lesssim (1+t)^{-(\frac{3}{2}+p)\frac{3}{2}}X^{\frac{3}{2}} (t).
\end{equation}
Next,
one can prove
\begin{equation}\label{Eh decay1}
\mathcal {E}_{l_0}^{h,high}(t)\lesssim\epsilon^2_{0}(1+t)^{-5/2}.
\end{equation}
In fact, it is straightforward to see from \eqref{lem.X.p2} that for any $0\leq t\leq T$,
\begin{equation*}
\mathcal {E}_{l_0}^{h,high}(t)\lesssim\mathcal {E}_{l_0}^{h}(t)\lesssim\mathcal {E}_{l_0}(t))\lesssim\mathcal {E}_{l_0+l_1}(t)\lesssim \eps_0^2.
\end{equation*}
On the other hand, noticing that $t^{5/2}\leq w^{\frac{5}{2(1-p)}\frac{\gamma+2s}{\gamma}}$ on the set $E^c(t)$ and  $l_1$ is given by
$$
l_1=\frac{5}{4(1-p)}\frac{\gamma+2s}{\gamma},
$$
one has from \eqref{lem.X.p2} that
\begin{equation*}
\mathcal {E}_{l_0}^{h,high}(t)\lesssim t^{-5/2}\mathcal{E}_{l_0+l_1}(t)\lesssim\epsilon^2_{0}t^{-5/2}.
\end{equation*}
Therefore, \eqref{Eh decay1} holds true. Finally,
one also can prove
\begin{equation}
\label{lem.X.p5}
\begin{split}
\|\nabla_x{\bf P}f(t)\|+\|\nabla_x^2\phi\|\lesssim (1+t)^{-5/4}\left\{\epsilon_{0}+X(t)\right\}.
\end{split}
\end{equation}
In fact, recalling (\ref{formal sol}),  the solution $f(t,x,v)$ to
\eqref{perturbed eqn} can be written as
\begin{equation*}
f(t)=S(t)f_0+\int_{0}^{t}S(t-\tau)h(\tau)\,d\tau,
\end{equation*}
with
\begin{equation}\label{h expr}
h=q\nabla_x\phi\cdot\nabla_v f-\frac{q}{2}v\cdot \nabla_x\phi f+\Gamma(f,f).
\end{equation}
Notice $\lag h_\pm,\sqrt{\mu}\rag =0$.
Applying Theorem \ref{basic decay lemma} in the case when $k=0$ and $\si_1=5/4$, it follows that
$$
\begin{array}{rlll}
\begin{split}
\|\nabla_x{\bf P}f(t)\|&+\|\nabla_x^2\phi\|\lesssim\|\nabla_xf(t)\|+\|\nabla_x^2\phi\|\\ \lesssim&
\epsilon_{0}(1+t)^{-5/4}+\int_0^t(1+t-\tau)^{-5/4}
\left(\left\|w^{l_\ast}h(\tau)\right\|_{Z_1}+\left\|w^{l_\ast}\nabla_xh(\tau)\right\|\right)d\tau,
\end{split}
\end{array}
$$
where in the homogeneous part, we choose $l_\ast=l_2$ with $l_2>\frac{\si_1(\ga+2s)}{\ga}=\frac{5(\ga+2s)}{4\ga}$,  while in the nonhomogeneous part, for later use, we choose $l_\ast$ such that
\begin{equation}
\label{def.l.ast}
 \frac{5(\ga+2s)}{4\ga}<l_\ast \leq K-5+\frac{\gamma+2s}{2\gamma}.
\end{equation}
In fact, $K\geq 8$, $-3<\ga<-2s$ and $1/2\leq s<1$ imply that
\begin{equation*}
\frac{\ga+2s}{\ga}=\left|\frac{\ga+2s}{\ga}\right|\leq \frac{3-2s}{2s}\in [1/2, 2],
\end{equation*}
and hence
\begin{equation*}
K>5+\frac{3(\ga+2s)}{4\ga},\ \text{i.e.}\  \frac{5(\ga+2s)}{4\ga}<K-5+\frac{\gamma+2s}{2\gamma},
\end{equation*}
so that it is valid to choose $l_\ast$ as in \eqref{def.l.ast}.
By using the direct calculations on the collision operator, cf.~Lemma \ref{lem.non.z1},
one can deduce that
\begin{eqnarray*}
&\dis \left\|w^{l_\ast}\Gamma(f,f)\right\|_{Z_1}
\lesssim\mathcal{E}_{l_0}(t),\\
&\dis \left\|w^{l_\ast }q\nabla_x\phi\cdot\nabla_v f\right\|_{Z_1}+\left\|w^{l_\ast}\frac{q}{2}v\cdot \nabla_x\phi f\right\|_{Z_1}\lesssim\mathcal{E}_{l_0}(t).
\end{eqnarray*}
where we have used
$$
\sum\limits_{|\alpha|+|\beta|\leq 5}\left\|w^{l_*-\frac{\gamma+2s}{2\gamma}}\partial_\beta^\alpha f\right\|\lesssim
\sum\limits_{|\alpha|+|\beta|\leq 5}\left\|w^{l_0+K-5}\partial_\beta^\alpha f\right\|,
$$
which can be guaranteed by $l_0+K-5\geq l_*-\frac{\gamma+2s}{2\gamma}$ due to $l_0\geq0$ and the choice of $l_\ast$ as before.
Then,
$$
\left\|w^{l_\ast}h(s)\right\|_{Z_1}\lesssim\mathcal{E}_{l_0}(t).
$$
Furthermore, in a similar way, one has
$$\left\|w^{l_\ast}q\nabla_x(\nabla_x\phi\cdot\nabla_vf)\right\|
+\left\|w^{l_\ast}\frac{q}{2}v\cdot \nabla_x\left(\nabla_x\phi f\right)\right\|\lesssim\mathcal{E}_{l_0}(t),$$
$$\left\|w^{l_\ast}\nabla_x\Gamma(f,f)\right\| \lesssim\mathcal{E}_{l_0}(t),
$$
which imply
 $$
 \left\|w^{l_\ast}h(t)\right\|_{Z_1}+\left\|w^{l_\ast}\nabla_xh(t)\right\|\lesssim\mathcal{E}_{l_0}(t).
 $$
With the above estimates and then using $\mathcal{E}_{l_0}(t)\leq (1+t)^{-3/2}X(t)$, one has
\begin{equation}\label{high decay}
\begin{split}
\|\nabla_x{\bf P}f(t)\|+\|\nabla_x^2\phi\|\lesssim&
\epsilon_{0}(1+t)^{-5/4}+X(t)\int_0^t(1+t-\tau)^{-5/4}(1+\tau)^{-3/2}d\tau
\\ \lesssim& (1+t)^{-5/4}\left(\epsilon_{0}+X(t)\right),
\end{split}
\end{equation}
which is the desired estimate \eqref{lem.X.p5}.

Notice that one has the following inequalities
\begin{equation*}
\int_0^t e^{-\lambda(t^p-\tau^p)}(1+\tau)^{-(\frac{3}{2}+p)\frac{3}{2}}\,d\tau\lesssim (1+t)^{-(\frac{5}{4}+\frac{5p}{2})},
\end{equation*}
\begin{equation*}
\int_0^t e^{-\lambda(t^p-\tau^p)}\tau^{p-1}(1+\tau)^{-\frac{5}{2}}\,d\tau\lesssim (1+t)^{-\frac{5}{2}},
\end{equation*}
and
 {\begin{equation}
\label{add-e1}
\int_0^te^{-\lambda(t^p-\tau^p)}(1+\tau)^{-5/2}\,d\tau\lesssim (1+t)^{-\frac{3}{2}-p}.
\end{equation}}
Then, plugging \eqref{lem.X.p4}, \eqref{Eh decay1} and \eqref{lem.X.p5} into \eqref{Ehhlinte} and using $1/2\leq p<1$, one has
\begin{eqnarray}\label{high total deccay}
\mathcal {E}^h_{l_0}(t)\lesssim \left\{\epsilon^2_{0}+X^{\frac{3}{2}} (t)+X^2(t)\right\}(1+t)^{-\frac{3}{2}-p},
\end{eqnarray}
for any $0\leq t\leq T$.

\medskip
\noindent{\bf Step 3.} In the same way to prove \eqref{high decay} basing on \eqref{lem.X.p5} and \eqref{h expr}, one can show
\begin{equation}\label{abc decay}
\|f\|+\|\nabla_x\phi\|\lesssim (1+t)^{-\frac{3}{4}}\left\{\epsilon_{0}+X(t)\right\}.
\end{equation}
Noticing $\mathcal {E}_{l_0}(t)\sim \|{\bf P}f\|^2+\|\nabla_x\phi\|^2+\mathcal {E}^h_{l_0}(t)$, \eqref{abc decay} together with (\ref{high total deccay}) give
\begin{equation}
\label{lem.X.p6}
\mathcal {E}_{l_0}(t)\lesssim (1+t)^{-\frac{3}{2}}\left\{\epsilon^2_{0}+X^{\frac{3}{2}} (t)+X^2(t)\right\}.
\end{equation}
Now, recall \eqref{def.X}. The desired estimate \eqref{close energy} follows from \eqref{lem.X.p2}, \eqref{high total deccay} and \eqref{lem.X.p6}.
This completes the proof of Lemma \ref{lem.X}.
\end{proof}

\begin{proof}[The proof of Theorem \ref{thm.gl}] It is immediate to follow from the a priori estimate \eqref{close energy} that
$
X(t)\lesssim \eps_0^2
$
holds true for any $0\leq t\leq T$, as long as $\eps_0$ is sufficiently small. The rest is to prove the local existence and uniqueness of solutions in  terms of the energy norm $\CE_{l_0+l_1}(t)$ and the non negativity of $F_\pm=\mu+\sqrt{\mu}f$, and the details of the proof are omitted for brevity; see also \cite{Guo5,Strain-Zhu} and \cite{GR}. Therefore, the global existence of solutions follows with the help of the continuity argument, and the estimates \eqref{thm.gl.c1}, \eqref{thm.gl.c2}  and \eqref{thm.gl.c3} hold by the definition of $X(t)$  \eqref{def.X}. This completes the proof of Theorem \ref{thm.gl}.
\end{proof}

 {Finally we give a remark on the possibility of upgrading the rate $(1+t)^{-\frac{3}{2}-p}$ in \eqref{thm.gl.c3} to $(1+t)^{-5/2}$ that corresponds to the case $p=1$. In fact, due to the technique of the paper it seems difficult to achieve such optimal rate for $\CE_{l_0}^{h}(t)$. The main reason is that the energy dissipation rate $\CD_{l_0}(t)$ in \eqref{lem.X.p3} is degenerate at large velocity for soft potentials, and thus, although the first term on the right-hand side of \eqref{lem.X.p3} decays with the optimal rate $(1+t)^{-5/2}$, it is impossible to deduce from  \eqref{lem.X.p3} the optimal time decay of $\CE_{l_0}^{h}(t)$ because of the inequality \eqref{add-e1}. On the other hand it is still possible to obtain the optimal time decay of only those high-order space differentiations in $\CE_{l_0}^{h}(t)$ by applying the linearized estimate \eqref{basic decay} to the nonlinear system. Indeed, define
$$
\widetilde{\CE}_0^{h}(t)=\sum_{1\leq k\leq K_0} \left(\|\na_x^k f(t)\|^2+\|\na_x^k E(t)\|^2\right).
$$
Then, with the help of Duhamel's principle as well as Theorem \ref{basic decay lemma}, similar for obtaining \eqref{high decay}, it is straightforward to show that 
$$
\widetilde{\CE}_0^{h}(t)\lesssim \eps_0^2 (1+t)^{-\frac{5}{2}}
$$
for an appropriate choice of $K_0$ in terms of $K$ large enough.}

\medskip
\noindent {\bf Acknowledgements:} RJD was supported by the General
Research Fund (Project No.~400511) from RGC of Hong Kong, and SQL was
supported by the grant from the National Natural Science Foundation
of China under contract 11101188.  {RJD would thank Professor Seiji Ukai for his continuous encouragement and strong support since they met each other in City University of Hong Kong in 2005, and this article is dedicated to the memory of S. Ukai. SQL would like to express his gratitude for the
hospitality of the Department of Mathematics at The Chinese University of Hong Kong during his visit.
The authors would thank the anonymous referee for the valuable comments on the manuscript. }

\end{document}